\documentclass[a4paper, 11pt, twoside, notitlepage]{amsart}

\usepackage{amsmath,amscd}
\usepackage{amssymb}
\usepackage{amsthm}
\usepackage{comment}
\usepackage{graphicx, xcolor}

\usepackage{mathrsfs}
\usepackage[colorlinks=true, linkcolor=blue]{hyperref}
\usepackage[normalem]{ulem}

\newcommand\redsout{\bgroup\markoverwith{\textcolor{red}{\rule[0.5ex]{2pt}{0.4pt}}}\ULon}

\usepackage{bm}
\usepackage{bbm}
\usepackage{url}

\usepackage[utf8]{inputenc}
\usepackage{mathtools,amssymb}
\usepackage{esint}
\usepackage{tikz}
\usetikzlibrary{arrows.meta,calc}
\usepackage{dsfont}
\usepackage{relsize}
\usepackage{url}
\usepackage{xcolor}
\usepackage{graphicx}
\usepackage{mathrsfs}
\usepackage[shortlabels]{enumitem}
\usepackage{lineno}
\usepackage{amsmath}
\usepackage{enumitem}
\usepackage{amsthm} 
\usepackage{verbatim}
\usepackage{dsfont}
\numberwithin{equation}{section}

\allowdisplaybreaks
\newcommand{\para}[1]{\vspace{3mm} \noindent\textbf{#1.}} 

\mathtoolsset{showonlyrefs}

\graphicspath{{images/}}

\newtheorem{theorem}{Theorem}[section]
\newtheorem{lemma}[theorem]{Lemma}
\newtheorem*{lemma*}{Lemma}
\newtheorem{definition}[theorem]{Definition}
\newtheorem{corollary}[theorem]{Corollary}

\newtheorem{proposition}[theorem]{Proposition}

\newtheorem{remark}[theorem]{Remark}

\title[Single-measurement obstacle uniqueness]{A Schiffer-type inverse obstacle problem for the planar Schr\"odinger equation with a single measurement}

\author[T. Liimatainen]{Tony Liimatainen}
\address{Department of Mathematics and Statistics, University of Helsinki, Helsinki, Finland}
\curraddr{}
\email{tony.t.liimatainen@jyu.fi}

\author[Y.-H. Lin]{Yi-Hsuan Lin}
\address{Department of Applied Mathematics, National Yang Ming Chiao Tung University, Hsinchu, Taiwan \& Fakult\"at f\"ur Mathematik, University of Duisburg-Essen, Essen, Germany}
\curraddr{}
\email{yihsuanlin3@gmail.com}

\author[P. Ola]{Petri Ola}
\address{Department of Mathematics and Statistics, University of Helsinki, Helsinki, Finland}
\curraddr{}
\email{petri.ola@helsinki.fi}

\keywords{Schiffer's problem, inverse obstacle problems, single boundary measurement, simultaneous recovery, Schr\"odinger equation, complex geometrical optics solutions, Carleman estimates}
\subjclass[2020]{Primary 35R30; Secondary 35J10, 35J25, 35A23}

\newcommand{\R}{{\mathbb R}}

\newcommand {\p} {\partial}

\begin{document}

\begin{abstract}
	We prove a single-measurement uniqueness result for interior sound-soft obstacles in the planar Schr\"odinger equation with an unknown surrounding potential. For any two configurations with different obstacles, we construct a boundary datum for which the corresponding Neumann measurements differ. The full Dirichlet-to-Neumann map uniquely determines both the obstacle and the potential. The obstacles may have several connected components, and no convexity or analyticity assumptions are imposed. Neither potential is required to be known near the measurement boundary.
\end{abstract}

	\maketitle

	\tableofcontents

	\section*{Disclaimer}
	This is a preliminary version, released rapidly because of the urgent pace of AI-driven developments in the field. The main results, arguments, and statements of this paper were developed without the use of AI. AI tools were used only for auxiliary mathematical discussions and for assistance with parts of the writing. The authors take full responsibility for the final mathematical statements and proofs. The authors will improve the presentation in an updated version following this initial release.

	\section{Introduction}\label{sec:introduction}
	
	In this paper, we investigate the simultaneous recovery of an interior sound-soft obstacle and its surrounding potential for the Schr\"odinger equation in a bounded planar domain. We prove that the Dirichlet-to-Neumann (DN) map on the outer boundary determines both unknowns when the potentials are real-valued and belong to $C^{1,\alpha}$, with $0<\alpha<1$. For each fixed pair of configurations with different obstacles, we construct one boundary datum for which the Neumann measurements differ. Neither potential is assumed to be known near the measurement boundary.
	
	Let $\Omega\subset\mathbb R^2$ be a bounded domain with $C^\infty$ boundary, and let $D\Subset\Omega$ be an obstacle with smooth boundary. The precise class of obstacles is given below. Throughout the paper, the outer boundary means the whole of $\partial\Omega$, including all its connected components. For a real-valued potential $q\in L^\infty(\Omega\setminus\overline D)$, consider
	\begin{equation}\label{eq:main}
		\begin{cases}
			(-\Delta+q)u=0&\text{in }\Omega\setminus\overline D,\\
			u=0&\text{on }\partial D,\\
			u=f&\text{on }\partial\Omega.
		\end{cases}
	\end{equation}
	We assume that
	\begin{equation}\label{eq:eigenvalue_condition}
		\text{$0$ is not a Dirichlet eigenvalue of $-\Delta+q$ in $\Omega\setminus\overline D$.}
	\end{equation}
	By the Fredholm alternative, every $f\in H^{1/2}(\partial\Omega)$ gives a unique solution $u_f\in H^1(\Omega\setminus\overline D)$ of \eqref{eq:main}. The associated DN map is
	\begin{equation}\label{eq:DN_map}
		\Lambda_{D,q}:H^{1/2}(\partial\Omega)\to H^{-1/2}(\partial\Omega),\quad f\mapsto\partial_\nu u_f\big|_{\partial\Omega},
	\end{equation}
	where $\nu$ is the exterior unit normal to $\Omega$ and the normal derivative is understood in the weak sense. The inverse problem is the following:
	\begin{enumerate}[\textbf{(IP)}]
		\item\label{Q:IP} Does the DN map $\Lambda_{D,q}$ determine the obstacle $D$ and the surrounding potential $q$ simultaneously?
	\end{enumerate}
	
	\begin{definition}\label{def:admissible_obstacle}
		Let $\Omega\subset\mathbb R^2$ be a bounded domain with $C^\infty$ boundary. We say that $D\Subset\Omega$ is an \emph{admissible obstacle} if
		\begin{enumerate}[\rm(i)]
			\item\label{item 1 def adm} $D$ is a bounded open set,
			\item\label{item 2 def adm} $\partial D$ is of class $C^\infty$ and has finitely many connected components,
			\item\label{item 3 def adm} $\Omega\setminus\overline D$ is connected.
		\end{enumerate}
	\end{definition}
	
	We assume that $\Omega\setminus\overline D$ is connected, but $D$ itself need not be connected. We write $D=\bigcup_{k=1}^N H_k$, where $H_1,\ldots,H_N$ are its pairwise disjoint connected components. Neither their number nor their shapes are prescribed.
	
	\para{Earlier works} A classical question in inverse scattering is whether a sound-soft obstacle can be determined from the far-field pattern produced by one incident plane wave at a fixed wave number. This is commonly called Schiffer's problem; see \cite{colton2018looking,blasten2021scattering}. Uniqueness from all incident directions at a fixed wave number is recorded in \cite{lax1967scattering}. For smooth sound-soft obstacles, uniqueness from one incident plane wave was proved under a size restriction relative to the wave number in \cite{colton1983uniqueness}, and this restriction was relaxed in \cite{gintides2005local}. Local uniqueness near an arbitrary smooth sound-soft obstacle, at any given wave number and incident direction, was established in \cite{stefanov2004local}. For sound-soft polyhedral scatterers, uniqueness from one incident plane wave was proved in \cite{alessandrini2005determining}. For each fixed pair of distinct sound-soft obstacles, \cite{liu2024solving} proves that their far-field patterns differ almost surely when the wave number or incident direction is chosen at random. Geometric uniqueness and approximate uniqueness results for source and medium supports were obtained under size and curvature assumptions in \cite{blasten2021scattering}.
	
	For a known planar domain, uniqueness of the potential from full Cauchy data for the Schr\"odinger equation was established in \cite{bukhgeim2008recovering}. For real-valued $C^{1,\alpha}$ potentials on a compact Riemann surface with boundary, uniqueness from Cauchy data on an arbitrary nonempty open subset of the boundary was proved in \cite{guillarmou2011calderon}.
	
	Obstacle recovery in a known inhomogeneous background was studied in \cite{kirsch1998recovering,nachman2007imaging}. For a known planar inhomogeneous and anisotropic medium, \cite{rondi2003uniqueness} determines multiple sound-soft defects at low frequency from at most two suitably chosen boundary measurements. For known elliptic operators with coefficients analytic near the obstacle, \cite{honda2013analytic} reconstructs a sound-soft obstacle with a Lipschitz, nowhere real analytic boundary from one measurement.
	
	When the surrounding medium is also unknown, a two-dimensional simultaneous uniqueness result was proved in \cite{hahner1998uniqueness} for a simply connected sound-soft obstacle, using Cauchy data on a surrounding circle for all incident waves and an interval of wave numbers. For one fixed incident direction, \cite{liu2017recovery} determines a sound-soft or sound-hard obstacle from far-field patterns over a frequency interval and also recovers the surrounding medium under additional admissibility assumptions. At one wave number, \cite{liu2008recovery} proves simultaneous uniqueness in three dimensions from far-field patterns for all incident and observation directions, for an admissible class of partially buried obstacles with planar contacts with the unknown inhomogeneity. These geometric assumptions differ from those of a smooth obstacle contained in the region where the potential is unknown.
	
	Simultaneous recovery from local boundary data has also been proved under structural assumptions on the coefficients. In \cite{yang2021uniqueness}, the leading and zeroth-order coefficients of a reduced wave equation and the embedded obstacles are recovered in dimensions $n\ge2$ under piecewise constant coefficient assumptions. For the Schr\"odinger equation in three dimensions, \cite{wu2025boundary} determines all derivatives of a smooth potential on the measured part of the outer boundary in the presence of an unknown obstacle. An additional analyticity assumption on the potential gives global recovery of the potential, the obstacle, and its boundary condition.
	
	For anisotropic fractional Schr\"odinger equations, simultaneous recovery of an obstacle and a potential was proved in \cite{cao2019simultaneously}. Any nonzero exterior Dirichlet value, together with its exterior Neumann measurement, determines the sound-soft obstacle, while recovery of the potential uses the exterior DN map. The proof uses nonlocal unique continuation and approximation by solutions with exterior data. For fractional semilinear equations with an analytic nonlinearity, \cite{LL2020inverse} proves simultaneous recovery of a cavity and the nonlinearity from the partial exterior DN map for sufficiently small data. A single nonzero sufficiently small exterior datum determines the cavity and the values of the nonlinearity along the corresponding solution.
	
	For semilinear elliptic equations, simultaneous recovery of an unknown cavity and the Taylor coefficients of a semilinear term was proved in \cite{lassas2021partial}. For the equation $\Delta u+a(x,u)=0$, the cavity result assumes $a(x,0)=\partial_u a(x,0)=0$. The first linearized equation is therefore the Laplace equation, and higher-order linearizations determine the remaining Taylor coefficients. Related simultaneous recovery results for semilinear equations with quadratic gradient terms were obtained in \cite[Theorem 1.6]{KU2019partial}. The recovery of cavities and nonlinear coefficients from local boundary data was extended to nonlinearities involving noninteger powers in \cite[Theorem 1.4]{LLST2022inverse}.
	
	Under piecewise analyticity assumptions, \cite[Theorem 2.2]{harrach2022simultaneous} determines a cavity, a scalar diffusion coefficient, and a nonlinear absorption term from local boundary data. For a semilinear Helmholtz equation with a known constant linear term, \cite[Theorem 3.1]{lu2022inverse} determines a cavity and the coefficient of a quadratic nonlinearity from the DN map on the outer boundary. The corresponding simultaneous recovery problem for the linear Schr\"odinger equation is explicitly left open in \cite{lassas2021partial,KU2019partial,LLST2022inverse}.
	
	The proof is inspired in part by the solution to the Calder\'on problem for Schr\"odinger operators on Riemannian surfaces in \cite{carstea_liimatainen_tzou_2024}. We adapt the comparison of exponential CGO growth with polynomial bounds from a Carleman estimate whose harmonic weight need not be Morse. This allows us to determine the obstacle without first recovering the potential.
	
	Finally, let us comment on the announced resolution of the anisotropic Calder\'on problem. Our results do not rely or follow from it. Moreover, its method of proof proceeds in local steps that require numerous or even infinite number of measurements. This contrasts with our main result, which shows that a single measurement suffices to recover the obstacle.

	\para{Main result} We answer the simultaneous recovery question in dimension two. The main difficulty is to determine the obstacle without knowing the surrounding potential. Indeed, for two configurations, the corresponding solutions satisfy $(-\Delta+q_1)(u_1-u_2)=(q_2-q_1)u_2$ in their common exterior, so equality of the outer Cauchy data does not yield a homogeneous equation for their difference. We first prove an obstacle recovery result from a single suitably chosen boundary measurement.
	
	\begin{theorem}[Single measurement recovery of the obstacle]\label{thm:Schiffer}
		Let $\Omega\subset\mathbb R^2$ be a bounded domain with $C^\infty$ boundary, and let $D_j\Subset\Omega$ be admissible obstacles for $j=1,2$. Let $q_j\in C^{1,\alpha}(\overline{\Omega}\setminus D_j)$ be real-valued for $j=1,2$, where $0<\alpha<1$. Assume that $0$ is not a Dirichlet eigenvalue of $-\Delta+q_j$ in $\Omega\setminus\overline{D_j}$ for $j=1,2$, and let $\Lambda_{D_j,q_j}$ be the DN map associated with
		\begin{equation}\label{eq:main_j}
			\begin{cases}
				(-\Delta+q_j)u_j=0&\text{in }\Omega\setminus\overline{D_j},\\
				u_j=0&\text{on }\partial D_j,\\
				u_j=f&\text{on }\partial\Omega.
			\end{cases}
		\end{equation}
		For each fixed pair of configurations $(D_1,q_1)$ and $(D_2,q_2)$, there exists a nonzero $f\in H^{1/2}(\partial\Omega)$, which may depend on both configurations, such that
		\begin{equation}\label{eq:single_Cauchy_data}
			\Lambda_{D_1,q_1}f=\Lambda_{D_2,q_2}f
		\end{equation}
		implies $D_1=D_2$. When $D_1\ne D_2$, we may choose $f$ to be the trace on $\partial\Omega$ of a CGO solution of $(-\Delta+q_1)u=0$ with $u=0$ on $\partial D_1$.
	\end{theorem}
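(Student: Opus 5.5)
We argue by contradiction. Suppose $D_1\neq D_2$. After possibly swapping indices, the standard unique-continuation geometry gives a point where one obstacle sticks out of the other into the common exterior. Let $G$ be the connected component of $\Omega\setminus\overline{D_1\cup D_2}$ whose boundary contains $\partial\Omega$. Then, after relabeling, there is a boundary point $x_0\in\partial D_1\cap\partial G$ with a neighbourhood $B$ such that $B\cap D_2=\emptyset$. In other words, a piece $\Gamma=\partial D_1\cap B$ of $\partial D_1$ lies in $\partial G$ and sits inside $\Omega\setminus\overline{D_2}$.

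We will take $f=u_1|_{\partial\Omega}$, where $u_1$ is a CGO solution of $(-\Delta+q_1)u_1=0$ in $\Omega\setminus\overline{D_1}$ with $u_1=0$ on $\partial D_1$. Its form is $u_1=e^{\Phi/h}(a+r_h)+\text{(reflected part)}$. Here $\Phi$ is a holomorphic function, $\mathrm{Re}\,\Phi$ is a harmonic weight that attains its maximum over $\overline G$ near $x_0\in\Gamma$, and the correction enforcing the Dirichlet condition on $\partial D_1$ is controlled via a Carleman estimate. The construction is modelled on \cite{carstea_liimatainen_tzou_2024}, with a weight that need not be Morse, and uses $q_1\in C^{1,\alpha}$.

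Suppose $\Lambda_{D_1,q_1}f=\Lambda_{D_2,q_2}f$. Then $u_1$ and $u_2$ have identical Cauchy data on $\partial\Omega$. The function $w=u_1-u_2$ satisfies $(-\Delta+q_1)w=(q_2-q_1)u_2$ in $G$, with $w=\partial_\nu w=0$ on $\partial\Omega$. This equation is not homogeneous, so we cannot simply continue $w$ across $G$. Instead we pass to an integral identity. Pair with an auxiliary solution $v$ of $(-\Delta+q_1)v=0$, itself a CGO with the conjugate phase $e^{-\bar\Phi/h}$-type, and integrate by parts over $G$. The outer boundary terms vanish. On $\Gamma\subset\partial D_1$ we have $u_1=0$, so $w=-u_2$ there, and $u_2$ is a smooth solution of the $q_2$ equation across $\Gamma$ because $\Gamma\cap\overline{D_2}=\emptyset$. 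The identity then reads
\begin{equation}
\int_{\partial G\cap\partial D_1}(\partial_\nu w\, v - w\,\partial_\nu v)\,d\sigma=\int_G (q_2-q_1)u_2 v\,dx+\int_{\partial G\cap\partial D_2}(\cdots).
\end{equation}

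The key step is a growth comparison. The left side, localized near $x_0$ where $\mathrm{Re}\,\Phi$ is maximal, grows like $e^{c/h}$ times a nonzero stationary-phase coefficient; this coefficient comes from $\partial_\nu u_1\neq0$ on $\Gamma$ by Hopf-type unique continuation. On the right side we bound $u_2$ using only polynomial-in-$1/h$ estimates. It satisfies a homogeneous equation in $\Omega\setminus\overline{D_2}$, with Dirichlet data $f$ whose size is $e^{\max_{\partial\Omega}\mathrm{Re}\Phi/h}$. We arrange $\max_{\partial\Omega}\mathrm{Re}\,\Phi$, and the values of the weight on $\partial D_2\cap\partial G$, to be strictly smaller than the value at $x_0$. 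A Carleman estimate for $-\Delta+q_2$ with the harmonic weight then bounds the right side by $h^{-N}e^{(c-\delta)/h}$. Letting $h\to0$ forces the leading coefficient to vanish, which is a contradiction, and so $D_1=D_2$.

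The main obstacle is choosing $\Phi$. We need a holomorphic phase whose real part peaks strictly at a point of $\Gamma$ relative to all of $\partial G\setminus\Gamma$, including $\partial\Omega$ and $\partial D_2$. We also need a CGO with this phase that vanishes exactly on the possibly multiply connected $\partial D_1$. To get this we first try Runge approximation on the Riemann surface $\overline{\Omega}\setminus D_1$, approximating a function with a sharp peak at $x_0$, and accept possibly degenerate critical points; this is why we need the non-Morse Carleman estimate.
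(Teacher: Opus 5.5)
Your proposal has a genuine gap at its core. The phase you need cannot exist, and this is not a technical difficulty that Runge approximation could overcome.

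Your contradiction needs a CGO $u_1$, vanishing on $\partial D_1$, that is exponentially larger near $x_0\in\Gamma\subset\partial D_1$ than on $\partial\Omega$, i.e.\ $\max_{\partial\Omega}\mathrm{Re}\,\Phi<\mathrm{Re}\,\Phi(x_0)$. But since $0$ is not a Dirichlet eigenvalue, $u_1$ is the unique solution of the $(D_1,q_1)$ problem with datum $f$. Hence $\|u_1\|_{H^1(\Omega\setminus\overline{D_1})}\le C\|f\|_{H^{1/2}(\partial\Omega)}$ with $C$ independent of $h$, and elliptic regularity up to $\partial D_1$ bounds $\partial_\nu u_1$ on $\Gamma$ in the same way. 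So nothing built from $u_1$ near $x_0$ can exceed $h^{-N}e^{\max_{\partial\Omega}\mathrm{Re}\,\Phi/h}$ by an exponential factor, whatever construction you use.

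For the reflection-type CGO you write down, the obstruction is explicit. Vanishing on $\partial D_1$ forces $\mathrm{Im}\,\Phi=0$ there, hence $\partial_\nu\mathrm{Re}\,\Phi=0$ on $\partial D_1$. By Hopf's lemma, a nonconstant harmonic function with zero Neumann data on $\partial D_1$ cannot attain its maximum over $\overline\Omega\setminus D_1$ on $\partial D_1$.

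The integral identity has a second problem. The volume term $\int_G(q_2-q_1)u_2v\,dx$ involves the unknown, non-small $q_2-q_1$ all the way up to $\Gamma$. Since any weight is continuous, the part of $G$ adjacent to $x_0$ carries the same exponential factor as the boundary term at $x_0$. So the inhomogeneity you correctly identified is not removed by pairing. Moreover, with $v$ of phase $-\overline{\Phi}$, the exponentials cancel in $u_1v$ and $\partial_\nu u_1\,v$, so the claimed $e^{c/h}$ growth of the left side does not appear.

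The missing idea is to move the comparison to a region where the potentials agree and the two solutions coincide.

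\begin{itemize}
\item The paper extends both potentials into an exterior collar $\omega$ with $q_1=q_2$ there. It glues $u_2:=u_1$ in $\omega$, which is legitimate by equality of the Cauchy data.
\item It takes $\varphi_1=\mathrm{Re}\,\Phi_1$ with $\psi_1=0$ on $\partial D_1$. It takes $\varphi_2$ harmonic in $\widetilde\Omega_2$ with $\varphi_2=\varphi_1$ on $\partial\widetilde\Omega$ and $\partial_\nu\varphi_2=0$ on $\partial D_2$.
\item A finite-perimeter energy argument shows that $\varphi_1=\varphi_2$ on $\omega$ forces $D_1=D_2$. Otherwise some ball $B\Subset\omega$ satisfies $\sigma(\varphi_1-\varphi_2)\ge 2c$.
\item The non-Morse Carleman estimate with weight $\sigma\varphi_2$ for the $q_2$-problem bounds $e^{-\sigma\varphi_2/h}u_2$ polynomially. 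The outer boundary terms are harmless because $\varphi_1=\varphi_2$ on $\partial\widetilde\Omega$.
\item On $B$ one has $u_2=u_{1,\sigma,h}$, which gives growth $e^{2c/h}$ and a contradiction.
\end{itemize}

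The exponential gap thus comes from two different harmonic weights evaluated at the same point of the collar, never from a peak on $\partial D_1$. The non-Morse estimate is needed for $\varphi_2$, not for the CGO phase.
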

	
	Moreover, the following result answers \ref{Q:IP}.
	
	\begin{corollary}[Simultaneous recovery]\label{cor:simultaneous_recovery}
		Under the assumptions of Theorem \ref{thm:Schiffer}, suppose that
		\begin{equation}\label{eq:same_DN}
			\Lambda_{D_1,q_1}f=\Lambda_{D_2,q_2}f
			\quad\text{for all }f\in H^{1/2}(\partial\Omega).
		\end{equation}
		Then $D_1=D_2=:D$ and $q_1=q_2$ in $\Omega\setminus\overline D$.
	\end{corollary}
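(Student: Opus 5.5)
The proof has two steps. First we apply Theorem \ref{thm:Schiffer} to the pair of configurations. Under \eqref{eq:same_DN}, the identity \eqref{eq:single_Cauchy_data} holds in particular for the datum $f$ produced by that theorem, so $D_1=D_2=:D$. This removes the obstacle from the problem. We are then left with a Calder\'on-type problem for two real-valued $C^{1,\alpha}$ potentials $q_1,q_2$ on the common domain $M:=\Omega\setminus\overline D$, with $\Lambda_{D,q_1}=\Lambda_{D,q_2}$ on $\partial\Omega$.

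The second step recovers the potential, viewing $M$ as a compact Riemann surface with boundary $\partial M=\partial\Omega\cup\partial D$. Since $M$ is connected by Definition \ref{def:admissible_obstacle}, the surface is connected. Its boundary has finitely many components, and we measure only on the part $\partial\Omega$. The Dirichlet condition $u=0$ is imposed on $\partial D$.

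I would reduce to the partial-data result of \cite{guillarmou2011calderon}, which concerns the Cauchy data set on an open subset $\Gamma\subset\partial M$. A priori that result ranges over all solutions whose boundary values are supported in $\Gamma$. Take $\Gamma=\partial\Omega$. The solutions of $(-\Delta+q_j)u=0$ in $M$ with $u|_{\partial M}$ supported in $\Gamma$ are exactly the solutions of \eqref{eq:main_j} with $u|_{\partial D}=0$. Hence the Cauchy data sets on $\Gamma$ of these solutions are $\{(f,\Lambda_{D,q_j}f)\}$, and equality of the DN maps says these two sets coincide. The theorem of \cite{guillarmou2011calderon} (for real-valued $C^{1,\alpha}$ potentials, from Cauchy data on an arbitrary nonempty open boundary subset with vanishing data on the complement) then gives $q_1=q_2$ on $M$.

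The main obstacle is checking that the hypotheses of \cite{guillarmou2011calderon} match our situation: the formulation of its Cauchy data with support in $\Gamma$, the $C^{1,\alpha}$ regularity up to the boundary, and the absence of any requirement that the potentials be known near $\partial\Omega$. If the cited formulation differs, I would reproduce its argument directly. The integral identity
\[
\int_M (q_1-q_2)u_1u_2\,dx=0
\]
holds for all solutions $u_j$ of \eqref{eq:main_j} with a common obstacle. It follows from Green's formula: the boundary terms on $\partial\Omega$ cancel by equality of the DN maps, and those on $\partial D$ vanish because $u_j=0$ there. One then inserts CGO solutions vanishing on $\partial D$, with holomorphic Morse phases having critical points at prescribed interior points, and concludes by stationary phase. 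These are the same CGO solutions used to prove Theorem \ref{thm:Schiffer}.
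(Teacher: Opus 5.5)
Your proposal is correct and follows the paper's proof. You obtain $D_1=D_2=:D$ from Theorem~\ref{thm:Schiffer}, identify equality of the DN maps with equality of the partial Cauchy data on $\Omega\setminus\overline D$ (measurements on $\partial\Omega$, zero Dirichlet values on $\partial D$), and then apply \cite[Theorem 1.1]{guillarmou2011calderon} to the real-valued $C^{1,\alpha}$ potentials. This is exactly the argument in the paper.
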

	
	The obstacles may have several connected components, and no convexity or analyticity is imposed. Neither potential is required to be known near the outer boundary.
	
	\begin{remark}\label{rem:single_measurement_scope}
		The datum in Theorem \ref{thm:Schiffer} may depend on both configurations. For a fixed pair with $D_1\ne D_2$, the construction in Section \ref{sec:contradiction} gives a family of CGO traces indexed by $h$. Choosing one sufficiently small value of $h$ gives a single Dirichlet datum whose Neumann measurements distinguish that pair. Corollary \ref{cor:simultaneous_recovery} assumes equality of the DN maps for all boundary data and uses this additional information to recover the potential. Since the potentials are real-valued, if a complex datum gives different Neumann measurements, then at least one of its real and imaginary parts does also. Thus the distinguishing datum may be taken real-valued.
	\end{remark}
	
	\para{Outline of the proof} We extend $q_1$ to a larger domain, preserving its $C^{1,\alpha}$ regularity. We extend $q_2$ by using the same function outside $\Omega$ and keeping its original values in $\Omega\setminus\overline{D_2}$. The two extended potentials therefore agree in the added collar. The extension of $q_2$ belongs to $L^\infty$ and may jump across $\partial\Omega$. If the original solutions $u_1$ and $u_2$ have the same Cauchy data on $\partial\Omega$, we extend $u_2$ by $u_1$ in the collar. Equality of the Dirichlet and Neumann traces makes this joined function a weak solution for the extended $q_2$.
	
	Using \cite{guillarmou2011calderon}, we construct a holomorphic Morse phase $\Phi_1=\varphi_1+\mathsf i\psi_1$ with $\psi_1=0$ on $\partial D_1$. The Cauchy--Riemann equations give $\partial_\nu\varphi_1=0$ there. On the enlarged exterior of $D_2$, we choose the harmonic function $\varphi_2$ with the same Dirichlet values as $\varphi_1$ on the enlarged outer boundary and with $\partial_\nu\varphi_2=0$ on $\partial D_2$. A finite-perimeter energy argument proves that $\varphi_1=\varphi_2$ in the added collar implies $D_1=D_2$. This argument applies without a transversality assumption on the obstacle boundaries.
	
	If $D_1\ne D_2$, there are a ball in the added collar and a sign $\sigma\in\{1,-1\}$ such that $\sigma(\varphi_1-\varphi_2)$ has a positive lower bound on that ball. We choose a CGO solution $u_{1,\sigma,h}$ with phase $\sigma\Phi_1$ and use its trace on $\partial\Omega$ as the Dirichlet datum. If the Neumann measurements agreed, the joined solution $u_2$ would belong to $H^2$ on the enlarged exterior of $D_2$. The Carleman estimate with weight $\sigma\varphi_2$ would give a polynomial upper bound for $\|e^{-\sigma\varphi_2/h}u_2\|_{L^2}$. On the chosen ball, the joined solution equals the CGO solution, whose representation gives an exponential lower bound for the same norm. These bounds are incompatible for all sufficiently small $h$. The Carleman estimate allows degenerate critical points of $\varphi_2$.
	
	Once $D_1=D_2=:D$ is known, equality of the outer DN maps gives equality of the partial Cauchy data on $\Omega\setminus\overline D$, with measurements on $\partial\Omega$ and zero Dirichlet values on $\partial D$. Applying \cite[Theorem 1.1]{guillarmou2011calderon} on this fixed domain determines the potential.
	
	\para{Organization of the article} The paper is organized as follows. In Section \ref{sec:prel}, we extend the potentials and construct the harmonic phases and CGO solutions. In Section \ref{sec:Carleman}, we prove the Carleman estimate. We then determine the obstacle in Section \ref{sec:contradiction} and recover the potential in Section \ref{sec:potential}.

	\section{Preliminaries}\label{sec:prel}
	
	Let $D$ be an admissible obstacle and let $q\in L^\infty(\Omega\setminus\overline D)$ be real-valued and satisfy \eqref{eq:eigenvalue_condition}. Set $\mathcal O:=\Omega\setminus\overline D$ and $A_q(u,v):=\int_{\mathcal O}(\nabla u\cdot\nabla\overline v+qu\overline v)\,dx$. The Fredholm alternative gives a unique solution $u_f\in H^1(\mathcal O)$ of \eqref{eq:main} for every $f\in H^{1/2}(\partial\Omega)$, with $\|u_f\|_{H^1(\mathcal O)}\le C\|f\|_{H^{1/2}(\partial\Omega)}$.
	
	For $g\in H^{1/2}(\partial\Omega)$, choose $Eg\in H^1(\mathcal O)$ with trace $g$ on $\partial\Omega$, zero trace on $\partial D$, and $\|Eg\|_{H^1(\mathcal O)}\le C\|g\|_{H^{1/2}(\partial\Omega)}$. Define $\langle\Lambda_{D,q}f,g\rangle:=A_q(u_f,Eg)$, with the pairing conjugate linear in $g$. This is independent of $Eg$, since the difference of two such extensions belongs to $H_0^1(\mathcal O)$ and $u_f$ solves the equation weakly. The preceding bounds give $\|\Lambda_{D,q}f\|_{H^{-1/2}(\partial\Omega)}\le C\|f\|_{H^{1/2}(\partial\Omega)}$. Since $q$ is real-valued, $\Lambda_{D,q}\overline f=\overline{\Lambda_{D,q}f}$, so equality of the DN maps on real data also gives equality on complex data. No spectral assumption is imposed on the enlarged domains below.

	We now work with the two configurations in Theorem \ref{thm:Schiffer}. Choose smooth domains $\widetilde\Omega$ and $\widehat\Omega$ with $\Omega\Subset\widetilde\Omega\Subset\widehat\Omega$ by adjoining sufficiently thin exterior collars to each component of $\partial\Omega$. Set
	\[
	\omega:=\widetilde\Omega\setminus\overline\Omega,
	\quad
	\widetilde\Omega_j:=\widetilde\Omega\setminus\overline{D_j},
	\quad
	\widehat\Omega_1:=\widehat\Omega\setminus\overline{D_1}.
	\]
	Each added collar meets the original connected exterior domain, so these enlarged exterior domains are connected.
	
	Choose a real-valued $C^{1,\alpha}$ extension of $q_1$ to $\overline{\widehat\Omega_1}$ by extending it in smooth boundary charts, and continue to denote this extension by $q_1$. On $\widetilde\Omega_2$, define
	\[
	q_2^{\mathrm{ext}}(x):=
	\begin{cases}
		q_2(x),&x\in\Omega\setminus\overline{D_2},\\
		q_1(x),&x\in\omega.
	\end{cases}
	\]
	The values on $\partial\Omega$ may be assigned arbitrarily. The resulting function belongs to $L^\infty(\widetilde\Omega_2)$ and may have a jump across $\partial\Omega$. In arguments on the enlarged domains, we write $q_2$ for $q_2^{\mathrm{ext}}$. Thus, $q_1=q_2$ in $\omega$ by construction. For $j=1,2$, the extended $q_j$ agrees with the original potential in $\Omega\setminus\overline{D_j}$. This equality in the added collar imposes no agreement between the original potentials near $\partial\Omega$. The extensions are fixed independently of $h$.
	
	The CGO solutions will be constructed in $\widehat\Omega_1$ and restricted to $\widetilde\Omega_1$, so that $\partial\widetilde\Omega$ lies strictly inside their domain of definition.
	
	Choose a nonempty open arc $\Gamma_*\subset\partial\widehat\Omega$ and set $\widehat\Gamma_0:=\partial\widehat\Omega_1\setminus\Gamma_*$. Fix $p\in\widetilde\Omega_1$ and choose $\delta>0$ so that $\overline{B(p,\delta)}\subset\widetilde\Omega_1$. Applying \cite[Proposition 2.1]{guillarmou2011calderon} to $\overline{\widehat\Omega_1}$ gives a holomorphic function $\Phi_1=\varphi_1+\mathsf i\psi_1$ that is Morse up to the boundary, has a nondegenerate critical point $p_0\in B(p,\delta)$, and satisfies $\psi_1=0$ on $\widehat\Gamma_0$.
	
	We also use the boundary regularity supplied by that construction. The function is constructed on an auxiliary smooth surface whose boundary contains $\widehat\Gamma_0$, while $\partial\widehat\Omega_1\setminus\widehat\Gamma_0$ lies in its interior. Its imaginary part vanishes on an open boundary neighborhood of $\widehat\Gamma_0$. Smooth Dirichlet regularity gives smoothness of $\psi_1$ near this boundary set, and the Cauchy--Riemann equations give smoothness of $\varphi_1$ there. Interior regularity applies at the remaining points of $\overline{\widehat\Omega_1}$. Thus, $\Phi_1\in C^\infty(\overline{\widehat\Omega_1})$. Its critical points on this compact set are nondegenerate and hence form a finite set. The Cauchy--Riemann equations show that $\varphi_1$ and $\psi_1$ have the same critical points as $\Phi_1$. At each such point, $\det D^2\varphi_1=-|\Phi_1''|^2<0$ in a complex coordinate. In particular, both $\varphi_1$ and $-\varphi_1$ are harmonic Morse functions.
	
	Since $\partial D_1\subset\widehat\Gamma_0$, one has $\psi_1=0$ and $\partial_\nu\varphi_1=0$ on $\partial D_1$. The trace of $\varphi_1$ on $\partial\widetilde\Omega$ is nonconstant. Indeed, if that trace were a constant $c$, then $w:=\varphi_1-c$ would have zero Dirichlet data on $\partial\widetilde\Omega$ and zero normal derivative on $\partial D_1$. Green's formula would give $\int_{\widetilde\Omega_1}|\nabla w|^2\,dx=0$. Connectedness and the outer Dirichlet condition would imply $\varphi_1=c$ in $\widetilde\Omega_1$. This contradicts the nondegenerate critical point $p_0$ in that domain.
	
	Let $\varphi_2$ solve
	\begin{equation*}
		\begin{cases}
			\Delta\varphi_2=0&\text{in }\widetilde\Omega_2,\\
			\varphi_2=\varphi_1&\text{on }\partial\widetilde\Omega,\\
			\partial_\nu\varphi_2=0&\text{on }\partial D_2.
		\end{cases}
	\end{equation*}
	The mixed problem has a unique real-valued solution $\varphi_2\in C^\infty(\overline{\widetilde\Omega_2})$, since $\widetilde\Omega_2$ is connected, the Dirichlet boundary is nonempty, and the two boundary conditions are imposed on disjoint smooth components. The nonconstant outer value $\varphi_1|_{\partial\widetilde\Omega}$ makes $\varphi_2$ nonconstant.
	
	The CGO construction with boundary correction in \cite[Section 4]{guillarmou2011calderon} applies to both $\Phi_1$ and $-\Phi_1$. We state the representation and remainder estimates needed below.
	
	\begin{lemma}\label{lem:CGO_with_boundary_correction}
		Let $\sigma\in\{1,-1\}$, and let $B\Subset\widetilde\Omega_1$ be a nonempty open set. There exists an amplitude $a_\sigma\in C^\infty(\overline{\widehat\Omega_1})$, holomorphic in $\widehat\Omega_1$, independent of $h$, real-valued on $\partial D_1$, and not identically zero on $B$, such that for all sufficiently small $h>0$ the function
		\begin{equation}\label{eq:CGO_representation_u1}
			u_{1,\sigma,h}=e^{\sigma\Phi_1/h}(a_\sigma+r_{\sigma,h})-e^{\sigma\overline{\Phi_1}/h}(\overline{a_\sigma}+\overline{r_{\sigma,h}})+e^{\sigma\varphi_1/h}\widetilde r_{\sigma,h}
		\end{equation}
		satisfies
		\begin{equation}\label{eq:u1}
			\begin{cases}
				(-\Delta+q_1)u_{1,\sigma,h}=0 &\text{in }\widehat\Omega_1,\\
				u_{1,\sigma,h}=0 &\text{on }\partial D_1.
			\end{cases}
		\end{equation}
		The remainders satisfy $\|r_{\sigma,h}\|_{L^2(\widehat\Omega_1)}=\mathcal O(h)$ and $\|\widetilde r_{\sigma,h}\|_{L^2(\widehat\Omega_1)}=\mathcal O(h^{3/2}|\log h|)$. The amplitude may be chosen nonzero at $p_0$ and to vanish to sufficiently high order at every other critical point of $\Phi_1$ on $\overline{\widehat\Omega_1}$. For each fixed $h$, the restriction of the exact solution $u_{1,\sigma,h}$ belongs to $H^2(\widetilde\Omega_1)$ and has zero trace on $\partial D_1$.
	\end{lemma}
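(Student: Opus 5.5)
We follow the construction in \cite[Section 4]{guillarmou2011calderon}, applied to $\sigma\Phi_1$ on $\widehat\Omega_1$ with the Dirichlet part $\widehat\Gamma_0\supset\partial D_1$.

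\textbf{Amplitude.} First choose $a_\sigma$ holomorphic on $\widehat\Omega_1$, smooth up to the boundary, with real boundary values on $\widehat\Gamma_0$. This uses the Riemann--Roch type lemma of \cite{guillarmou2011calderon}, which gives holomorphic functions with $\operatorname{Im}a=0$ on $\widehat\Gamma_0$ and prescribed jets at finitely many interior points. We prescribe $a_\sigma(p_0)\neq0$ and vanishing to high order at the other, finitely many, critical points. Since $p_0\in B(p,\delta)$ and $a_\sigma$ is holomorphic and nonzero at $p_0$, it is not identically zero on any nonempty open subset of the connected set $\widehat\Omega_1$, in particular on $B$. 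The sign $\sigma$ only replaces $\Phi_1$ by $-\Phi_1$, which is again Morse with the same critical points and $\operatorname{Im}(-\Phi_1)=0$ on $\widehat\Gamma_0$. So the same construction applies for both signs.

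\textbf{Ansatz and boundary condition.} Write $u=e^{\sigma\Phi_1/h}(a_\sigma+r)-e^{\sigma\overline{\Phi_1}/h}(\overline{a_\sigma}+\overline r)+e^{\sigma\varphi_1/h}\widetilde r$. On $\widehat\Gamma_0$ we have $\Phi_1=\overline{\Phi_1}=\varphi_1$. If $r$ is real there, the first two terms cancel, and the third must vanish. Following \cite{guillarmou2011calderon}, take $r=-\overline\partial^{-1}$-type corrections solving $\partial_{\overline z}$-equations with a $\sigma\psi_1$-oscillating weight, built from $q_1 a_\sigma$. The stationary phase at the nondegenerate critical points, together with the high-order vanishing of $a_\sigma$ at the other critical points, yields $\|r\|_{L^2}=\mathcal O(h)$; the boundary-reality of $r$ is arranged by the reflection trick in the construction. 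The remaining error term is then $e^{\sigma\varphi_1/h}\widetilde r$. Here $\widetilde r$ solves $(-\Delta+q_1)(e^{\sigma\varphi_1/h}\widetilde r)=-F$ with $\widetilde r=0$ on $\widehat\Gamma_0$, where $F$ is the residual. This is obtained from the boundary Carleman estimate of \cite{guillarmou2011calderon} for the Morse weight $\sigma\varphi_1$ (valid with Dirichlet data on $\widehat\Gamma_0$, where $\partial_\nu\varphi_1=0$) via duality. The residual size $\mathcal O(h^{1/2}|\log h|)$ combined with the $h$-gain of the estimate gives $\|\widetilde r\|_{L^2}=\mathcal O(h^{3/2}|\log h|)$. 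This duality step is the main obstacle, since one needs the Carleman estimate with Dirichlet condition only on $\widehat\Gamma_0$ and no condition on $\Gamma_*$. This is exactly why $\Gamma_*$ was placed on the outer enlarged boundary, so that it never meets $\widetilde\Omega_1$.

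\textbf{Regularity.} For fixed $h$, $u$ is a weak $H^1$ solution of $(-\Delta+q_1)u=0$ with zero trace on the smooth boundary $\partial D_1$, and $q_1\in C^{1,\alpha}$. Elliptic regularity therefore gives $u\in H^2$ near $\partial D_1$ and in the interior. Since $\partial\widetilde\Omega$ lies strictly inside $\widehat\Omega_1$, interior regularity covers it, so the restriction lies in $H^2(\widetilde\Omega_1)$.
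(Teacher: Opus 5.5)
\textbf{Gap: the $H^2$ regularity and zero trace near $\partial D_1$.} Your overall route is the paper's: the amplitude from \cite[Lemma 2.3]{guillarmou2011calderon}, the correction of \cite[Section 4]{guillarmou2011calderon}, cancellation of $E-\overline E$ on $\widehat\Gamma_0$, and a remainder obtained by duality from the Carleman estimate. The gap is in your regularity paragraph. You start from ``$u$ is a weak $H^1$ solution with zero trace on $\partial D_1$'', but your construction does not provide this. The remainder $\widetilde r$ produced by the duality (Hahn--Banach) argument is only an $L^2$ function, and ``$\widetilde r=0$ on $\widehat\Gamma_0$'' holds only in the transposition sense. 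So a priori $u$ has no trace on $\partial D_1$, and $H^1$-to-$H^2$ boundary regularity cannot be invoked. The last sentence of the lemma asserts exactly this regularity and zero trace, so it must be proved. The paper proceeds in four steps.
\begin{enumerate}[(i)]
\item The weak identity for $\widetilde r$, combined with the exact boundary cancellation for $W=E-\overline E$ (an $H^2$ function vanishing on $\widehat\Gamma_0$), gives
\[
\int_{\widehat\Omega_1}u\,\overline{L_1\eta}\,dx=0
\]
for all smooth $\eta$ vanishing on $\partial\widehat\Omega_1$ with $\partial_\nu\eta=0$ on $\partial\widehat\Omega$.
\item Take a cutoff $\chi$ equal to one near $\partial D_1$ and zero near $\partial\widehat\Omega$, so that $\chi\eta$ is admissible in (i). Then $z=\chi u$ satisfies $\int z\,\overline{L_1\eta}=\int b\,\overline\eta$, with $b=[L_1,\chi]u\in L^2$, for all $\eta\in H^2\cap H_0^1$.
\item Compare $z$ with the solution $z_0\in H^2\cap H_0^1$ of $(L_1+\lambda)z_0=b+\lambda z$, where $\lambda>\|q_1\|_{L^\infty}$ so that no spectral assumption on $L_1$ in $\widehat\Omega_1$ is needed.
\item Testing with the Dirichlet solution of $(L_1+\lambda)\eta=z-z_0$ gives $z=z_0$.
\end{enumerate}
This gives $H^2$ regularity and zero trace near $\partial D_1$. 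Your interior-regularity remark near $\partial\widetilde\Omega$ then completes the proof.

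\textbf{Size of $\widetilde r$.} Your bookkeeping is off, although the two errors cancel. The Carleman estimate only controls $h^{-1}\|w\|_{L^2}^2$; the term $h^{-2}\||d\varphi|w\|^2$ degenerates at the critical points. Duality therefore gives only $\|\widetilde r\|_{L^2}\le Ch^{1/2}\|F\|_{L^2}$, a gain of $h^{1/2}$, not $h$. The bound $\mathcal O(h^{3/2}|\log h|)$ thus needs the conjugated residual of $a_\sigma+r_{\sigma,h}$ to be $\mathcal O(h|\log h|)$, which is what \cite[Lemma 4.3]{guillarmou2011calderon} provides. With your stated residual $\mathcal O(h^{1/2}|\log h|)$ you would only obtain $\mathcal O(h|\log h|)$.

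\textbf{Reality on $\widehat\Gamma_0$.} The reality of $a_\sigma+r_{\sigma,h}$ on $\widehat\Gamma_0$ does not come from a reflection. The paper uses that $k_\sigma=h^{-1}s_{\sigma,h}|_{\partial\widehat\Omega_1}$ is independent of $h$. It then adds $h a_{0,\sigma}$, with $a_{0,\sigma}$ holomorphic and $\operatorname{Im}a_{0,\sigma}=-\operatorname{Im}k_\sigma$ on a neighbourhood of $\widehat\Gamma_0$. Since $e^{\sigma\Phi_1/h}a_{0,\sigma}$ is holomorphic, this adds only $hq_1a_{0,\sigma}$ to the conjugated residual.
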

	
	\begin{proof}
		Choose an open boundary neighborhood $\Gamma_0^\sharp$ of $\widehat\Gamma_0$ whose complement contains an open subarc of $\Gamma_*$. The interpolation result in \cite[Lemma 2.3]{guillarmou2011calderon} gives a holomorphic amplitude $a_\sigma$, smooth up to the boundary and real-valued on $\Gamma_0^\sharp$, with $a_\sigma(p_0)\ne0$ and zeros of sufficiently high order at the other critical points. If $a_\sigma$ vanished identically on the nonempty open set $B$, its holomorphicity and the connectedness of $\widehat\Omega_1$ would imply $a_\sigma\equiv0$ throughout $\widehat\Omega_1$, contradicting $a_\sigma(p_0)\ne0$.
		
		Write $L_1:=-\Delta+q_1$. By \cite[Lemma 4.3]{guillarmou2011calderon}, the phase $\sigma\Phi_1$ admits a correction $s_{\sigma,h}\in H^2(\widehat\Omega_1)$ satisfying $\|s_{\sigma,h}\|_{L^2}=\mathcal O(h)$ and $\|e^{-\sigma\Phi_1/h}L_1e^{\sigma\Phi_1/h}(a_\sigma+s_{\sigma,h})\|_{L^2}=\mathcal O(h|\log h|)$. The term supported near $p_0$ vanishes near the boundary, and the remaining term is $h$ times a fixed $H^2$ function. The boundary function $k_\sigma:=h^{-1}s_{\sigma,h}|_{\partial\widehat\Omega_1}$ belongs to $H^{3/2}(\partial\widehat\Omega_1)$ and is independent of $h$, as observed in \cite[proof of Lemma 4.4]{guillarmou2011calderon}.
		
		Choose a holomorphic $b_{0,\sigma}\in H^2(\widehat\Omega_1)$ with $\operatorname{Re}b_{0,\sigma}=-\operatorname{Im}k_\sigma$ on $\Gamma_0^\sharp$, using \cite[Corollary 2.2(ii)]{guillarmou2011calderon}, and put $a_{0,\sigma}:=\mathsf i b_{0,\sigma}$. This function is independent of $h$ and satisfies $\operatorname{Im}a_{0,\sigma}=-\operatorname{Im}k_\sigma$. Set $r_{\sigma,h}:=s_{\sigma,h}+ha_{0,\sigma}$. The boundary value of $a_\sigma+r_{\sigma,h}$ is real on $\widehat\Gamma_0$, and $\|r_{\sigma,h}\|_{L^2}=\mathcal O(h)$. Since $e^{\sigma\Phi_1/h}a_{0,\sigma}$ is holomorphic, $e^{-\sigma\Phi_1/h}L_1e^{\sigma\Phi_1/h}(ha_{0,\sigma})=hq_1a_{0,\sigma}$. The residual remains $\mathcal O(h|\log h|)$ in $L^2$. This is the boundary adjustment in \cite[Section 4.2]{guillarmou2011calderon}, with a real boundary amplitude.
		
		Define $E_{\sigma,h}:=e^{\sigma\Phi_1/h}(a_\sigma+r_{\sigma,h})$ and $W_{\sigma,h}:=E_{\sigma,h}-\overline{E_{\sigma,h}}$. Both $\Phi_1$ and $a_\sigma+r_{\sigma,h}$ are real on $\widehat\Gamma_0$, so $W_{\sigma,h}=0$ there. The real-valued potential gives $L_1\overline{E_{\sigma,h}}=\overline{L_1E_{\sigma,h}}$. The factors $e^{\pm\mathsf i\sigma\psi_1/h}$ have modulus one, and the preceding estimate gives
		\begin{equation}\label{eq:signed_CGO_residual}
			F_{\sigma,h}:=e^{-\sigma\varphi_1/h}L_1W_{\sigma,h},\quad \|F_{\sigma,h}\|_{L^2(\widehat\Omega_1)}\le Ch|\log h|.
		\end{equation}
		The functions $W_{\sigma,h}$ and $F_{\sigma,h}$ are purely imaginary.
		
		Apply \cite[Lemma 4.5]{guillarmou2011calderon} with weight $-\sigma\varphi_1$. Its conjugated operator is $P_{\sigma,h}:=e^{-\sigma\varphi_1/h}L_1e^{\sigma\varphi_1/h}$. Writing $F_{\sigma,h}=\mathsf i f_{\sigma,h}$ with $f_{\sigma,h}$ real, solve $P_{\sigma,h}s=-f_{\sigma,h}$ for a real function $s$ with zero Dirichlet data on $\widehat\Gamma_0$, and set $\widetilde r_{\sigma,h}:=\mathsf i s$. We obtain $P_{\sigma,h}\widetilde r_{\sigma,h}=-F_{\sigma,h}$ and $\|\widetilde r_{\sigma,h}\|_{L^2}\le Ch^{1/2}\|F_{\sigma,h}\|_{L^2}\le Ch^{3/2}|\log h|$. The function $u_{1,\sigma,h}:=W_{\sigma,h}+e^{\sigma\varphi_1/h}\widetilde r_{\sigma,h}$ satisfies \eqref{eq:u1} and \eqref{eq:CGO_representation_u1}.
		
		For fixed $h$, write $u:=u_{1,\sigma,h}\in L^2(\widehat\Omega_1)$. At this stage, the zero Dirichlet condition obtained from \cite[Lemma 4.5]{guillarmou2011calderon} is understood in the transposition sense. Combining its weak identity with the cancellation of the residual of $W_{\sigma,h}$ gives
		\[
		\int_{\widehat\Omega_1}u\,\overline{L_1\eta}\,dx=0
		\]
		for every $\eta\in C^\infty(\overline{\widehat\Omega_1})$ such that $\eta=0$ on $\partial\widehat\Omega_1$ and $\partial_\nu\eta=0$ on $\partial\widehat\Omega$. The normal derivative of $\eta$ is unrestricted on $\partial D_1$; this identity expresses the zero Dirichlet condition there. Taking compactly supported $\eta$ gives $L_1u=0$ in distributions. Since $q_1u\in L^2(\widehat\Omega_1)$, interior elliptic regularity gives $u\in H^2_{\mathrm{loc}}(\widehat\Omega_1)$. We next use the same identity to obtain $H^2$ regularity and the zero trace at $\partial D_1$.
		
		Choose a real-valued $\chi\in C^\infty(\overline{\widehat\Omega_1})$ equal to one near $\partial D_1$ and zero near $\partial\widehat\Omega$. Set $z:=\chi u$ and $b:=[L_1,\chi]u=-2\nabla\chi\cdot\nabla u-(\Delta\chi)u$. The derivatives of $\chi$ are supported in a compact subset of $\widehat\Omega_1$, so $b\in L^2(\widehat\Omega_1)$. Let $\eta\in C^\infty(\overline{\widehat\Omega_1})$ have zero boundary values. Then $\chi\eta$ is an admissible test function in the preceding identity, and
		\begin{equation*}
			\int_{\widehat\Omega_1}z\overline{L_1\eta}\,dx
			=-\int_{\widehat\Omega_1}u\overline{[L_1,\chi]\eta}\,dx
			=\int_{\widehat\Omega_1}b\overline\eta\,dx.
		\end{equation*}
		The last equality follows by integration by parts on the compact support of the derivatives of $\chi$. Both sides are continuous in the $H^2$ norm of $\eta$, so the identity holds for every $\eta\in H^2(\widehat\Omega_1)\cap H_0^1(\widehat\Omega_1)$.
		
		Choose $\lambda>\|q_1\|_{L^\infty(\widehat\Omega_1)}$. The Dirichlet problem $(L_1+\lambda)z_0=b+\lambda z$ has a unique solution $z_0\in H^2(\widehat\Omega_1)\cap H_0^1(\widehat\Omega_1)$. Green's formula and the preceding identity give
		\begin{equation*}
			\int_{\widehat\Omega_1}(z-z_0)\overline{(L_1+\lambda)\eta}\,dx=0,
			\quad \eta\in H^2(\widehat\Omega_1)\cap H_0^1(\widehat\Omega_1).
		\end{equation*}
		Taking $\eta$ to be the Dirichlet solution of $(L_1+\lambda)\eta=z-z_0$ gives $\|z-z_0\|_{L^2(\widehat\Omega_1)}^2=0$. Hence, $z=z_0\in H^2(\widehat\Omega_1)\cap H_0^1(\widehat\Omega_1)$.
		
		Since $u=z$ near $\partial D_1$, the function $u$ belongs to $H^2$ up to this boundary and has zero trace there. Interior regularity elsewhere, together with $\widetilde\Omega\Subset\widehat\Omega$, gives $u\in H^2(\widetilde\Omega_1)$. Only $L_1+\lambda$ was inverted; no spectral assumption on $L_1$ in $\widehat\Omega_1$ is needed.
	\end{proof}
	
	\begin{lemma}\label{lem:gluing_observation}
		Assume that $0$ is not a Dirichlet eigenvalue for either original problem and that $q_1=q_2$ in $\omega$. Let $u_1\in H^1(\widetilde\Omega_1)$ solve $(-\Delta+q_1)u_1=0$ with $u_1=0$ on $\partial D_1$, and set $f:=u_1|_{\partial\Omega}$. Let $u_2$ be the solution of
		\begin{equation}\label{eq:u2}
			\begin{cases}
				(-\Delta+q_2)u_2=0&\text{in }\Omega\setminus\overline{D_2},\\
				u_2=0&\text{on }\partial D_2,\\
				u_2=f&\text{on }\partial\Omega.
			\end{cases}
		\end{equation}
		If $\Lambda_{D_1,q_1}f=\Lambda_{D_2,q_2}f$, extension by $u_2:=u_1$ in $\omega$ gives an $H^1(\widetilde\Omega_2)$ weak solution of $(-\Delta+q_2)u_2=0$.
	\end{lemma}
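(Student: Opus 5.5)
The plan is to verify the weak formulation directly by splitting the integral over $\widetilde\Omega_2$ into its parts in $\Omega\setminus\overline{D_2}$ and in $\omega$. Define $U\in L^2(\widetilde\Omega_2)$ by $U:=u_2$ in $\Omega\setminus\overline{D_2}$ and $U:=u_1$ in $\omega$. The first step is to check that $U\in H^1(\widetilde\Omega_2)$. Both pieces are $H^1$ on their Lipschitz subdomains. Their traces on the common interface $\partial\Omega$ coincide: $u_2|_{\partial\Omega}=f$ by \eqref{eq:u2}, and $u_1|_{\partial\Omega}=f$ by definition, where the trace of $u_1$ is taken from the $\omega$ side. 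Since $u_1\in H^1(\widetilde\Omega_1)$, its traces from the two sides of $\partial\Omega$ agree. The standard gluing lemma for $H^1$ functions with matching traces across a smooth interface then gives $U\in H^1(\widetilde\Omega_2)$, with $\nabla U$ equal to the piecewise gradients.

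Next, fix a test function $\eta\in C_c^\infty(\widetilde\Omega_2)$. Because $\eta$ vanishes near $\partial D_2$ and near $\partial\widetilde\Omega$, we will show
\[
\int_{\widetilde\Omega_2}\big(\nabla U\cdot\nabla\overline\eta+q_2U\overline\eta\big)\,dx=0 .
\]
This splits as $I_{\mathrm{in}}+I_{\mathrm{out}}$, corresponding to the integrals over $\Omega\setminus\overline{D_2}$ and over $\omega$.

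For $I_{\mathrm{in}}$: the restriction of $\eta$ to $\Omega\setminus\overline{D_2}$ is an $H^1$ function with zero trace on $\partial D_2$ and trace $g:=\eta|_{\partial\Omega}$. It is therefore an admissible extension $Eg$ in the weak definition of the DN map from Section \ref{sec:prel}, and so $I_{\mathrm{in}}=\langle\Lambda_{D_2,q_2}f,g\rangle$.

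For $I_{\mathrm{out}}$: in $\omega$ we have $q_2=q_1$. We use that $u_1$ solves $(-\Delta+q_1)u_1=0$ weakly in all of $\widetilde\Omega_1\supset\Omega\setminus\overline{D_1}$. Take $\widetilde\eta$ on $\widetilde\Omega_1$ equal to $\eta$ in $\omega$ and equal, in $\Omega\setminus\overline{D_1}$, to an $H^1$ extension $E_1g$ with zero trace on $\partial D_1$ and trace $g$ on $\partial\Omega$. This $\widetilde\eta$ is in $H^1_0(\widetilde\Omega_1)$ after gluing, since it vanishes near $\partial\widetilde\Omega$ and on $\partial D_1$, with matching traces on $\partial\Omega$. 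Testing the equation for $u_1$ against $\widetilde\eta$ gives
\[
I_{\mathrm{out}}+A_{q_1}(u_1|_{\Omega\setminus\overline{D_1}},E_1g)=0 .
\]
The restriction $u_1|_{\Omega\setminus\overline{D_1}}$ solves \eqref{eq:main_j} with $j=1$ and datum $f$, and by uniqueness (the eigenvalue assumption) it is $u_f$ for $(D_1,q_1)$. Hence $I_{\mathrm{out}}=-\langle\Lambda_{D_1,q_1}f,g\rangle$.

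Adding the two pieces gives $\langle(\Lambda_{D_2,q_2}-\Lambda_{D_1,q_1})f,g\rangle=0$ by hypothesis, which is exactly the weak equation. The only delicate point is the sign and orientation bookkeeping. The exterior normal $\nu$ of $\Omega$ is the interior normal of $\omega$, and working purely with the weak pairings avoids computing normal derivatives of $u_1$ on $\partial\Omega$ explicitly. The other point to be careful about is the admissibility of $\widetilde\eta$ as an $H^1_0$ test function; this again follows from the gluing lemma. The zero trace of $U$ on $\partial D_2$ is inherited from $u_2$, although the statement only requires the weak equation.
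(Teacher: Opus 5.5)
Your proposal is correct and follows the paper's proof: you glue the two $H^1$ pieces using the matching traces on $\partial\Omega$, identify $u_1|_{\Omega\setminus\overline{D_1}}$ with the solution defining $\Lambda_{D_1,q_1}f$ by uniqueness, and split the weak form across $\partial\Omega$ to obtain $\langle(\Lambda_{D_2,q_2}-\Lambda_{D_1,q_1})f,\eta|_{\partial\Omega}\rangle=0$. Your glued test function $\widetilde\eta\in H^1_0(\widetilde\Omega_1)$ is a careful weak-form version of the paper's ``Green's formula on the two sides of $\partial\Omega$'', and it avoids using a strong normal derivative of $u_1$ on $\partial\Omega$.
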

	
	\begin{proof}
		Uniqueness on $\Omega\setminus\overline{D_1}$ identifies the restriction of $u_1$ with the solution defining $\Lambda_{D_1,q_1}f$. The two Dirichlet traces on $\partial\Omega$ agree, so the piecewise definition of $u_2$ gives an $H^1$ function. For $\eta\in C_c^\infty(\widetilde\Omega_2)$, Green's formula on the two sides of $\partial\Omega$ gives
		\begin{equation*}
			\int_{\widetilde\Omega_2}(\nabla u_2\cdot\nabla\overline\eta+q_2u_2\overline\eta)\,dx=\big\langle\Lambda_{D_2,q_2}f-\Lambda_{D_1,q_1}f,\eta|_{\partial\Omega}\big\rangle=0.
		\end{equation*}
		The normals of the two regions on their common boundary are opposite. In the collar, $u_1$ solves the equation with $q_2$ because the extended potentials agree in $\omega$. This extension is obtained directly from $u_1$ in $\omega$ and requires no additional spectral assumption on $\widetilde\Omega_2$.
	\end{proof}

	\section{Carleman estimate}\label{sec:Carleman}
	
	The comparison function $\varphi_2$ need not be Morse. We prove a Carleman estimate for a nonconstant harmonic weight that may have degenerate critical points. Its additional power of $h^{-1}$ does not affect the contradiction with the exponential lower bound in Section \ref{sec:contradiction}.
	
	Let $\mathcal O$ be a smooth bounded planar domain and write $\partial\mathcal O=\Gamma\cup\Gamma_0$, where $\Gamma$ and $\Gamma_0$ are disjoint unions of boundary components and $\Gamma\ne\emptyset$. In this section, $\nu$ denotes the unit outer normal to $\mathcal O$.
	
	\begin{proposition}[\text{\cite[Proposition 3.1]{guillarmou2011calderon}}]\label{prop:GT11_Carleman}
		Let $\mathcal O$ be a smooth bounded planar domain, and let $\varphi\in C^\infty(\overline{\mathcal O})$ be a real-valued harmonic function that is Morse on $\overline{\mathcal O}$ and satisfies $\partial_\nu\varphi=0$ on $\Gamma_0$. Then for every $V\in L^\infty(\mathcal O)$ there exist constants $C>0$ and $h_0>0$ such that, for all $0<h<h_0$ and all $w\in C^\infty(\overline{\mathcal O})$ with $w|_{\partial\mathcal O}=0$, one has
		\begin{equation}\label{eq:GT11_Carleman}
			\begin{split}
				& h^{-1}\|w\|_{L^2(\mathcal O)}^2
				+ h^{-2}\||d\varphi|w\|_{L^2(\mathcal O)}^2
				+ \|dw\|_{L^2(\mathcal O)}^2
				+ \|\partial_\nu w\|_{L^2(\Gamma_0)}^2 \\
				& \quad \le C\big(
				\|e^{-\varphi/h}(-\Delta+V)e^{\varphi/h}w\|_{L^2(\mathcal O)}^2
				+h^{-1}\|\partial_\nu w\|_{L^2(\Gamma)}^2
				\big).
			\end{split}
		\end{equation}
	\end{proposition}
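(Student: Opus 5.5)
This is \cite[Proposition 3.1]{guillarmou2011calderon}, so the plan is to recall the structure of that proof. The estimate is proved in the $L^2$-based semiclassical framework. Write $P_\varphi:=e^{-\varphi/h}(-h^2\Delta)e^{\varphi/h}$ and split it into its formally self-adjoint and skew-adjoint parts $A$ and $B$:
\[
P_\varphi=-h^2\Delta-|d\varphi|^2-2h\nabla\varphi\cdot\nabla,
\quad A=-h^2\Delta-|d\varphi|^2,\quad B=-2h\nabla\varphi\cdot\nabla .
\]
The term $h\Delta\varphi$ is absent because $\varphi$ is harmonic. For $w$ vanishing on $\partial\mathcal O$, expanding $\|P_\varphi w\|^2=\|Aw\|^2+\|Bw\|^2+2\operatorname{Re}(Aw,Bw)$ and integrating by parts gives a commutator term $(\mathsf i[A,B]w,w)$ plus boundary contributions. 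Because $\varphi$ is harmonic, the commutator is $\mathcal O(h^2)$ times a quadratic form involving $D^2\varphi$. It is not positive by itself. The argument therefore follows the standard approach of \cite{guillarmou2011calderon}: convexify, replacing $\varphi$ by $\varphi_\epsilon:=\varphi-\frac{h}{2\epsilon}|z|^2$ in a complex coordinate, or more precisely by $\varphi-\frac{h}{\epsilon}\rho$ with a suitable subharmonic $\rho$. This yields a positive term $\frac{h^3}{\epsilon}\|w\|^2$ from $\Delta\rho>0$.

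For the boundary terms, note that $w=0$ on $\partial\mathcal O$ leaves only terms of the form $h^3\int_{\partial\mathcal O}\partial_\nu\varphi\,|\partial_\nu w|^2$. On $\Gamma_0$ we have $\partial_\nu\varphi=0$, so only the correction $\rho$ contributes there. Choose $\rho$ with $\partial_\nu\rho<0$ on $\Gamma_0$, so that the contribution is a positive multiple of $h^4\|\partial_\nu w\|_{L^2(\Gamma_0)}^2$. On $\Gamma$, bound the boundary term crudely by $h^3\|\partial_\nu w\|_{L^2(\Gamma)}^2$. This accounts for the $h^{-1}$ weight on the right-hand side after dividing by $h^4$.

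Next comes the key step, recovering $h^{-2}\||d\varphi|w\|^2$ and $\|dw\|^2$. Use the Morse property. $|d\varphi|^2$ vanishes only at finitely many nondegenerate critical points, where it behaves like $|z-z_j|^2$. The terms $\|Aw\|^2$ and $\|Bw\|^2$ together with the positive $h^3\|w\|^2/\epsilon$ term control $h^2\||d\varphi|w\|^2$. The mechanism is an elliptic estimate away from the critical points, where $|d\varphi|\ge c$, combined with a local Hardy-type inequality near each critical point. The gradient term follows from $(Aw,w)$ by integrating by parts.

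Finally, pass from $\varphi_\epsilon$ back to $\varphi$. The weights $e^{\pm h\rho/\epsilon}$ are bounded uniformly, so this costs only constants. Absorb $V$ by fixing $\epsilon$ small and $h<h_0$, since $h^2\|Vw\|^2$ is dominated by $h^3\|w\|^2/\epsilon$ only when $h$ is small relative to $\epsilon$. Then rescale by $h^{-4}$ to obtain \eqref{eq:GT11_Carleman}. The main obstacle is the sign and size of the boundary term on $\Gamma_0$ together with the degeneracy at the critical points. The Morse hypothesis is used exactly in the Hardy-type bound near each critical point.
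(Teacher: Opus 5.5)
\textbf{Verdict: there is a genuine gap, in the interior positivity step that your proposal calls the key step.}

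Your outline has the right ingredients: a convexifier $\varphi-\frac h\epsilon\rho$ with $\Delta\rho>0$, control of the boundary term, a Hardy inequality at the nondegenerate critical points, removal of the extra weight, and absorption of $V$. Your treatment of $\Gamma_0$ is a legitimate alternative to the paper's. With $w|_{\partial\mathcal O}=0$, the only boundary contribution to $\|P_{\varphi_\epsilon}w\|^2$ is $2h^3\int_{\partial\mathcal O}\partial_\nu\varphi_\epsilon|\partial_\nu w|^2\,dS$. Choosing $\partial_\nu\rho<0$ on $\Gamma_0$ is compatible with $\Delta\rho>0$ because $\Gamma\neq\emptyset$, and it makes this term a positive multiple of $\frac{h^4}{\epsilon}\|\partial_\nu w\|^2_{L^2(\Gamma_0)}$ there. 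The paper instead keeps $\partial_\nu\rho_0=0$ on $\Gamma_0$. It gets $\|\partial_\nu u\|^2_{L^2(\partial\mathcal O)}$ from the tangential term $-4\operatorname{Im}\int\partial_{\tau_z}\zeta\,\overline\zeta\,ds=\int a(t)^2\,dt$ in circular collar coordinates.

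\textbf{Why the interior step fails as described.} For harmonic $\varphi$ and $w|_{\partial\mathcal O}=0$, one computes exactly
\[
2\operatorname{Re}(Aw,Bw)=2h^3\int_{\partial\mathcal O}\partial_\nu\varphi\,|\partial_\nu w|^2\,dS-4h\int_{\mathcal O}\big(D^2\varphi(h\nabla w,h\nabla\overline w)+D^2\varphi(\nabla\varphi,\nabla\varphi)|w|^2\big)\,dx .
\]
The interior part is of size $h$ and has no sign. The term $\rho$ only adds contributions of size $h^2/\epsilon\le h$, so convexifying does not make the commutator positive. The harmonic part must therefore be absorbed by $\|Aw\|^2+\|Bw\|^2$. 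The standard way to do this divides the bracket by the symbols of $A$ and $B$, which produces coefficients that blow up at the critical points.

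Your substitute does not close this. First, $P_\varphi$ is not elliptic where $|d\varphi|\ge c$. Its symbol $|\xi|^2-|d\varphi|^2-2\mathsf i\,d\varphi\cdot\xi$ vanishes at the two covectors $\xi\perp d\varphi$ with $|\xi|=|d\varphi|$ over every noncritical point. Second, the Hardy inequality bounds $h^{-1}\|\eta\|^2$ by $\|d\eta\|^2+h^{-2}\||d\varphi|\eta\|^2$, not the reverse. So $h^2\||d\varphi|w\|^2$ cannot be recovered from an $h^3\|w\|^2$ bound. Moreover, $\rho$ does not produce such a bound: its $L^2$ gain alone is only $\frac{h^4}{\epsilon^2}\|w\|^2$.

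\textbf{The missing idea} is the $\bar\partial$ factorization used in the paper. Set $\vartheta=\varphi_{\epsilon,h}/h$ and $\zeta=e^{-\vartheta}\partial_z(e^\vartheta u)$. Then $e^{-\vartheta}(-\Delta_z)e^\vartheta u=-4e^{-\vartheta}\partial_{\bar z}(e^\vartheta\zeta)$. The first-order identity for $e^{-\vartheta}\partial_{\bar z}e^\vartheta$ has commutator exactly $-\Delta_z\vartheta\ge c_*/\epsilon$, with no $D^2\varphi$ term. Combine this with $4\|\zeta\|^2=\|du\|^2+\||d\vartheta|u\|^2+\int(-\Delta_z\vartheta)u^2$ for real $u$. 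In your normalization this gives a lower bound by a multiple of $\frac{h^2}{\epsilon}(\|h\,dw\|^2+\||d\varphi_\epsilon|w\|^2)+\frac{h^4}{\epsilon^2}\|w\|^2$. Only after this does the Morse/Hardy step upgrade the $L^2$ term to $h^3\|w\|^2$, that is, $h^{-1}\|w\|^2$ after dividing by $h^4$.

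\textbf{Two smaller slips.} The potential contributes $h^4\|Vw\|^2$, not $h^2\|Vw\|^2$; it is absorbed by the $\frac{h^4}{\epsilon^2}\|w\|^2$ term after fixing $\epsilon$ small. The weight removed at the end is $e^{\rho/\epsilon}$, not $e^{\pm h\rho/\epsilon}$, and it is bounded only because $\epsilon$ is fixed.
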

	
	We construct a convexifying function by solving a Poisson--Neumann problem. The normal derivative on the boundary is estimated in circular conformal coordinates on a collar of each boundary component, as in \cite[Section 3]{guillarmou2011calderon}.
	
	\begin{proposition}\label{prop:harmonic_Carleman}
		Let $\mathcal O$ be a smooth bounded planar domain, and let $\partial\mathcal O=\Gamma\cup\Gamma_0$, where $\Gamma$ and $\Gamma_0$ are disjoint unions of boundary components and $\Gamma\neq\emptyset$. Let $\varphi\in C^\infty(\overline{\mathcal O})$ be a real-valued nonconstant harmonic function in $\mathcal O$ such that $\partial_\nu\varphi=0$ on $\Gamma_0$. Then for every $V\in L^\infty(\mathcal O)$ there exist constants $C>0$ and $h_0>0$ such that, for all $0<h<h_0$ and all $w\in H^2(\mathcal O)\cap H_0^1(\mathcal O)$, one has
		\begin{equation}\label{eq:harmonic_Carleman}
			\begin{split}
				& h^{-1}\|w\|_{L^2(\mathcal O)}^2
				+ h^{-2}\||d\varphi|w\|_{L^2(\mathcal O)}^2
				+ \|dw\|_{L^2(\mathcal O)}^2
				+ \|\partial_\nu w\|_{L^2(\Gamma_0)}^2 \\
				& \quad \le
				Ch^{-1}\big(
				\|e^{-\varphi/h}(-\Delta+V)e^{\varphi/h}w\|_{L^2(\mathcal O)}^2
				+h^{-1}\|\partial_\nu w\|_{L^2(\Gamma)}^2
				\big).
			\end{split}
		\end{equation}
		If $\varphi$ is Morse on $\overline{\mathcal O}$, the factor $h^{-1}$ in front of the parentheses on the right-hand side may be omitted, and one recovers Proposition~\ref{prop:GT11_Carleman}.
	\end{proposition}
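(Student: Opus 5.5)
We would follow the standard convexification route used in \cite[Section 3]{guillarmou2011calderon}, replacing the Morse hypothesis by a weaker quantitative input. First reduce to $V=0$. The term $\|Vw\|^2_{L^2}$ is absorbed by the left-hand side because the left side contains $h^{-1}\|w\|^2$ while the right side carries the factor $Ch^{-1}$. We also reduce by density to $w\in C^\infty(\overline{\mathcal O})$ with $w|_{\partial\mathcal O}=0$, since both sides are continuous in the $H^2$ norm.

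Next choose a convexifying weight. Let $\rho\in C^\infty(\overline{\mathcal O})$ solve the Poisson--Neumann-type problem $\Delta\rho=c$ in $\mathcal O$, with $\partial_\nu\rho$ prescribed on $\partial\mathcal O$ so that $\partial_\nu\rho=0$ on $\Gamma_0$ and the compatibility condition holds. This is possible because $\Gamma\ne\emptyset$ allows the flux to be placed on $\Gamma$. Then set $\varphi_\epsilon:=\varphi-\tfrac{h}{2\epsilon}\rho$, or equivalently conjugate $e^{\varphi/h}$ by $e^{\rho/(2\epsilon)}$. Split the conjugated operator $P=e^{-\varphi_\epsilon/h}(-h^2\Delta)e^{\varphi_\epsilon/h}$ into self-adjoint and anti-self-adjoint parts $A,B$ as usual. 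Then compute $\|Pw\|^2=\|Aw\|^2+\|Bw\|^2+(i[A,B]w,w)+\text{boundary terms}$. Since $\varphi$ is harmonic, $\varphi$ itself contributes nothing to the commutator. The perturbation $\rho$ contributes a positive term of size $\tfrac{h}{\epsilon}\int|d\varphi|^2|w|^2$ plus $h^2$-order terms, exactly as in the Guillarmou--Tzou proof.

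The Morse case uses $|d\varphi|^2\gtrsim h$ away from $O(\sqrt h)$-balls around the critical points to control $\|w\|^2$. Without the Morse property we instead use only that $\varphi$ is nonconstant and harmonic. Its critical set is finite in $\mathcal O$, up to the boundary accounting given by $\partial_\nu\varphi=0$ on $\Gamma_0$ and the reflection in circular conformal collars. Near each critical point, $|d\varphi|^2\ge c|z-z_k|^{2m}$. The $\|w\|^2$ control then comes from the gradient term: on a disc of radius $r$ a Poincaré/Hardy-type inequality gives $\|w\|^2_{L^2(B_r)}\lesssim r^2\|dw\|^2+\|\,|d\varphi|w\|^2/\min_{\partial\text{-annulus}}|d\varphi|^2$. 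Choosing $r$ as a fixed small constant loses at most one power of $h$ relative to the Morse case. This yields the extra factor $h^{-1}$ in \eqref{eq:harmonic_Carleman}.

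The boundary terms are handled as in \cite{guillarmou2011calderon}. In circular conformal coordinates on collars, $\partial_\nu\varphi=\partial_\nu\rho=0$ on $\Gamma_0$ makes the boundary contribution there a positive multiple of $\|\partial_\nu w\|^2_{\Gamma_0}$, up to lower-order terms. On $\Gamma$, the boundary contribution is bounded by $h^{-1}\|\partial_\nu w\|^2_{L^2(\Gamma)}$. Finally, undo the $e^{\rho/(2\epsilon)}$ conjugation; it is bounded with its derivatives, independently of $h$. We expect the main obstacle to be the quantitative lower bound near degenerate critical points, including those on $\Gamma_0$, in a form that costs only one power of $h$. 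If the Hardy-type step fails, we would instead use the unique continuation estimate $\|w\|\lesssim h^{-1/2}(\ldots)$ obtained from the $\|dw\|^2$ term together with the smallness of the critical regions.
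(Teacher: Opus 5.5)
Your skeleton is the paper's: convexify by $\varphi-\frac{h}{2\epsilon}\rho$ with $\Delta\rho$ constant and the Neumann flux placed on $\Gamma$, treat boundary terms in circular conformal collars, and undo the $h$-independent factor $e^{\rho/(2\epsilon)}$ at the end. There are, however, genuine gaps.

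The first is your opening reduction to $V=0$, whose justification does not hold. Suppose you have the $V=0$ estimate with $Ch^{-1}\|e^{-\varphi/h}(-\Delta)e^{\varphi/h}w\|^2$ on the right and then insert $V$. This creates $2C\|V\|_{L^\infty}^2h^{-1}\|w\|^2$ on the right, against $h^{-1}\|w\|^2$ on the left. These carry the same power of $h$, so absorption needs $2C\|V\|_{L^\infty}^2<1$, and nothing provides that. The factor $h^{-1}$ you concede on the right is exactly the margin that makes this absorption automatic in the Morse case. You need a free small parameter. In the paper it is the convexification parameter: \eqref{eq:global_convexified_laplacian} has $\epsilon^{-2}\|w\|^2$ on the left with $C_1$ independent of $\epsilon$. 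Then $\epsilon$ is fixed so that $2C_1\|V\|_{L^\infty}^2\le\frac12\epsilon^{-2}$ \emph{before} $e^{\rho_0/\epsilon}$ is removed. So $V$ must be inserted at the convexified level, not at the outset.

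The second gap is the central positivity step, which is not correct as you describe it. Take $A=-h^2\Delta-|d\varphi|^2$ and $iB=-2h\langle d\varphi,d\,\cdot\,\rangle$. Harmonicity does not remove $\varphi$ from the commutator: for $w\in H^1_0(\mathcal O)$ its bulk part is $-4h\int\big(\operatorname{Hess}\varphi(h\nabla w,h\nabla\overline w)+\operatorname{Hess}\varphi(\nabla\varphi,\nabla\varphi)|w|^2\big)\,dx$. This is unsigned, because the Hessian is trace-free, and it is of order $h$. The gain from $\rho$ is only of order $h^2/\epsilon$, and it is positive only on the characteristic set, since only the trace of $\operatorname{Hess}\rho$ is signed. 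Harmonicity merely makes the principal symbol vanish on the characteristic set. The usual way to exploit that requires $d\varphi\neq0$, which is precisely what fails at critical points.

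The paper sidesteps this with $\Delta_z=4\partial_{\bar z}\partial_z$ and the exact identity \eqref{eq:carleman_first_order_identity}. For $\vartheta=\varphi/h-\rho_0/\epsilon$ the harmonic part cancels identically, and the only zeroth-order term is $\int(-\Delta_z\vartheta)|\zeta|^2\ge(c_*/\epsilon)\|\zeta\|^2$. The energy identity $4\|\zeta\|^2=\|du\|^2+\||d\vartheta|u\|^2+\int(-\Delta_z\vartheta)u^2$ then yields $\epsilon^{-2}\|u\|^2+\epsilon^{-1}\|du\|^2+\epsilon^{-1}h^{-2}\||d\varphi_{\epsilon,h}|u\|^2$, with no information about critical points. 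The loss of $h^{-1}$ is nothing more than multiplying the resulting $h$-neutral estimate by $h^{-1}$.

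Your Hardy-type analysis near degenerate critical points is therefore unnecessary, and in the stated generality it is also unavailable. No condition is imposed on $\varphi$ along $\Gamma$, so there is no reflection there, and $d\varphi$ may vanish to infinite order at a point of $\Gamma$. Hence $|d\varphi|^2\ge c|z-z_k|^{2m}$ can fail. In any case, once $\|dw\|^2$ is controlled, the Poincar\'e inequality for $w\in H^1_0(\mathcal O)$ bounds $\|w\|^2$ without any reference to $d\varphi$.
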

	
	\begin{proof}
		We first prove the estimates for $w\in C^\infty(\overline{\mathcal O})$ with zero boundary values. Let $|\mathcal O|$ denote the area of $\mathcal O$, let $|\Gamma|:=\mathcal H^1(\Gamma)$, and define $\beta=0$ on $\Gamma_0$ and $\beta=|\mathcal O|/|\Gamma|$ on $\Gamma$. The function $\beta$ is smooth because $\Gamma$ and $\Gamma_0$ are unions of boundary components. The identity $\int_{\partial\mathcal O}\beta\,dS=|\mathcal O|$ is the compatibility condition for
		\begin{equation}\label{eq:convexifier_Neumann}
			\begin{cases}
				\Delta\rho_0=1&\text{in }\mathcal O,\\
				\partial_\nu\rho_0=0&\text{on }\Gamma_0,\\
				\partial_\nu\rho_0=\beta&\text{on }\Gamma,\\
				\displaystyle\int_{\mathcal O}\rho_0\,dx=0.
			\end{cases}
		\end{equation}
		By standard elliptic theory for the Neumann problem, \eqref{eq:convexifier_Neumann} has a unique real-valued solution $\rho_0\in C^\infty(\overline{\mathcal O})$, independent of $h$ and $\epsilon$. For $0<h\le\epsilon<1$, set $\varphi_{\epsilon,h}:=\varphi-h\rho_0/\epsilon$. Then $-\Delta\varphi_{\epsilon,h}=h/\epsilon$ in $\mathcal O$ and $\partial_\nu\varphi_{\epsilon,h}=0$ on $\Gamma_0$.
		
		Cover $\overline{\mathcal O}$ by finitely many interior neighborhoods and collars of its boundary components. On each boundary collar, choose a conformal coordinate $z=x+\mathsf iy$ which maps the collar onto an annulus $A=\{r_0<|z|<1\}$, with $0<r_0<1$, and the corresponding component of $\partial\mathcal O$ onto $|z|=1$. If that component is initially mapped to the inner circle, compose the map with a holomorphic inversion and rescale so that it is mapped to the unit circle. The other circle $|z|=r_0$ corresponds to a curve inside $\mathcal O$. In these coordinates, the Euclidean metric is $e^{2\lambda(z)}|dz|^2$. Smooth boundary regularity for the conformal map gives a fixed smooth function $\lambda$ up to the boundary, with $e^{2\lambda}$ bounded above and below by positive constants. On an interior neighborhood, use the original Euclidean coordinates and set $\lambda=0$.
		
		Write $\Delta_z:=\partial_x^2+\partial_y^2$ and $\vartheta:=\varphi_{\epsilon,h}/h$ in one such neighborhood $A$. The Laplacian in these coordinates satisfies $\Delta=e^{-2\lambda}\Delta_z$, so $-\Delta_z\vartheta=e^{2\lambda}/\epsilon\ge c_*/\epsilon$ for a fixed $c_*>0$. In the following local calculation, all norms and integrals use Euclidean measure in the $z$ coordinate. Write $\nu_z$ for its exterior unit normal and $\tau_z$ for the rotation of $\nu_z$ by $+\pi/2$. In a boundary collar, the functions considered below vanish near $|z|=r_0$, so only $|z|=1$ contributes to the boundary integrals. In an interior neighborhood, the functions have compact support in $A$.
		
		Set $\partial_z=(\partial_x-\mathsf i\partial_y)/2$ and $\partial_{\bar z}=(\partial_x+\mathsf i\partial_y)/2$. For a smooth complex-valued function $v$ supported in this neighborhood, integration by parts gives
		\begin{equation}\label{eq:carleman_first_order_identity}
			\begin{split}
				4\|e^{-\vartheta}\partial_{\bar z}(e^\vartheta v)\|_{L^2(A)}^2
				&=\|(\partial_x+\mathsf i\vartheta_y)v\|_{L^2(A)}^2+\|(\mathsf i\partial_y+\vartheta_x)v\|_{L^2(A)}^2\\
				&\quad\, +\int_A(-\Delta_z\vartheta)|v|^2\,dx\,dy+\int_{\partial A}\partial_{\nu_z}\vartheta\,|v|^2\,ds-\operatorname{Im}\int_{\partial A}\partial_{\tau_z}v\,\overline v\,ds.
			\end{split}
		\end{equation}
		Write $v=v_R+\mathsf i v_I$. The cross term containing derivatives of the weight is $\int_A\nabla\vartheta\cdot\nabla|v|^2$, which gives the terms with $-\Delta_z\vartheta$ and $\partial_{\nu_z}\vartheta$. The remaining cross term is $2\int_A(\partial_xv_I\partial_yv_R-\partial_xv_R\partial_yv_I)$. The divergence theorem gives $2\int_{\partial A}v_I\partial_{\tau_z}v_R$. In a boundary collar, only the integral over $|z|=1$ remains. Parametrize this circle by $z=e^{\mathsf it}$. Since $v_R(e^{\mathsf it})$ and $v_I(e^{\mathsf it})$ are $2\pi$-periodic and $\partial_{\tau_z}=\partial_t$ there, integration by parts gives $\int_{\partial A}v_R\partial_{\tau_z}v_I\,ds=-\int_{\partial A}v_I\partial_{\tau_z}v_R\,ds$. In an interior neighborhood, both boundary integrals are zero. Expanding the imaginary part of $(\partial_{\tau_z}v)\overline v$, we obtain
		\begin{equation*}
			-\operatorname{Im}\int_{\partial A}(\partial_{\tau_z}v)\overline v\,ds=\int_{\partial A}(v_I\partial_{\tau_z}v_R-v_R\partial_{\tau_z}v_I)\,ds=2\int_{\partial A}v_I\partial_{\tau_z}v_R\,ds.
		\end{equation*}
		
		Let $u\in C^\infty(\overline A)$ be real-valued. In a boundary collar, assume that $u=0$ on $|z|=1$ and that $u$ vanishes near $|z|=r_0$. In an interior neighborhood, assume that $u$ has compact support in $A$. Set $\zeta:=e^{-\vartheta}\partial_z(e^\vartheta u)$. The identity $\Delta_z=4\partial_{\bar z}\partial_z$ gives $e^{-\vartheta}(-\Delta_z)e^\vartheta u=-4e^{-\vartheta}\partial_{\bar z}(e^\vartheta\zeta)$. Its squared $L^2(A)$ norm is $16\|e^{-\vartheta}\partial_{\bar z}(e^\vartheta\zeta)\|_{L^2(A)}^2$. Apply \eqref{eq:carleman_first_order_identity} with $v=\zeta$, discard the two nonnegative squared norms, and multiply the resulting inequality by four. Using $-\Delta_z\vartheta\ge c_*/\epsilon$ gives
		\begin{equation*}
			\|e^{-\vartheta}(-\Delta_z)e^\vartheta u\|_{L^2(A)}^2\ge\frac{4c_*}{\epsilon}\|\zeta\|_{L^2(A)}^2+4\int_{\partial A}\partial_{\nu_z}\vartheta\,|\zeta|^2\,ds-4\operatorname{Im}\int_{\partial A}\partial_{\tau_z}\zeta\,\overline\zeta\,ds.
		\end{equation*}
		Since $u$ is real, $4|\zeta|^2=|\nabla u+u\nabla\vartheta|^2$. The boundary value of $u$ is zero, so a second integration by parts gives
		\begin{equation}\label{eq:carleman_real_energy}
			4\|\zeta\|_{L^2(A)}^2=\|du\|_{L^2(A)}^2+\||d\vartheta|u\|_{L^2(A)}^2+\int_A(-\Delta_z\vartheta)u^2\,dx\,dy.
		\end{equation}
		The last integral is at least $c_*\epsilon^{-1}\|u\|_{L^2(A)}^2$. The bulk terms control $\epsilon^{-2}\|u\|_{L^2}^2$, $\epsilon^{-1}\|du\|_{L^2}^2$, and $\epsilon^{-1}h^{-2}\||d\varphi_{\epsilon,h}|u\|_{L^2}^2$.
		
		On the circle $|z|=1$, which corresponds to the chosen boundary component of $\partial\mathcal O$, write $z=e^{\mathsf it}$ and set $a(t):=(\partial_{\nu_z}u)(e^{\mathsf it})$. Since the annulus lies on the side $|z|<1$, its exterior unit normal is $\nu_z=(\cos t,\sin t)$ and its positively oriented unit tangent is $\tau_z=(-\sin t,\cos t)$. In particular, $\partial_{\tau_z}=\partial_t$ along this circle. The tangential derivative of $u$ is zero there, so $\zeta=\tfrac12e^{-\mathsf it}a(t)$ and $\partial_{\tau_z}\zeta=\tfrac12e^{-\mathsf it}(a'(t)-\mathsf i a(t))$. Since $a$ is real, $-4\operatorname{Im}(\partial_{\tau_z}\zeta\,\overline\zeta)=a(t)^2$. The other boundary contribution is $4\partial_{\nu_z}\vartheta|\zeta|^2=h^{-1}\partial_{\nu_z}\varphi_{\epsilon,h}|\partial_{\nu_z}u|^2$. It is zero on $\Gamma_0$. Its absolute value on $\Gamma$ is bounded by $Ch^{-1}|\partial_{\nu_z}u|^2$, uniformly for $h\le\epsilon$, because $h/\epsilon\le1$ and the functions $\varphi$ and $\rho_0$ are fixed.
		
		The identities $\partial_\nu=e^{-\lambda}\partial_{\nu_z}$ and $dS=e^\lambda ds$, together with the fixed bounds on the conformal factor, convert these estimates to the original Euclidean coordinates. We obtain
		\begin{equation}\label{eq:local_convexified_Carleman}
			\begin{split}
				&\epsilon^{-2}\|u\|_{L^2(\mathcal O)}^2+\epsilon^{-1}\|du\|_{L^2(\mathcal O)}^2+\epsilon^{-1}h^{-2}\||d\varphi_{\epsilon,h}|u\|_{L^2(\mathcal O)}^2+\|\partial_\nu u\|_{L^2(\partial\mathcal O)}^2\\
				&\quad\le C\big(\|e^{-\varphi_{\epsilon,h}/h}(-\Delta)e^{\varphi_{\epsilon,h}/h}u\|_{L^2(\mathcal O)}^2+h^{-1}\|\partial_\nu u\|_{L^2(\Gamma)}^2\big).
			\end{split}
		\end{equation}
		There are no boundary terms on an interior neighborhood. For complex-valued $u$, apply the real-valued estimate separately to its real and imaginary parts and add. At this stage the conjugated Laplacian has real coefficients, so its squared norm is the sum of the two squared norms. The potential is added only after this step.
		
		Choose fixed real-valued smooth functions $\chi_\ell$ subordinate to the coordinate neighborhoods, with $\sum_\ell\chi_\ell^2=1$ on $\overline{\mathcal O}$. For $w=0$ on $\partial\mathcal O$, one has $\partial_\nu(\chi_\ell w)=\chi_\ell\partial_\nu w$ and $\sum_\ell\|d(\chi_\ell w)\|_{L^2}^2=\|dw\|_{L^2}^2+\int_{\mathcal O}\sum_\ell|d\chi_\ell|^2|w|^2\,dx$. The other local terms sum to the corresponding global norms. Set $P_{\epsilon,h}^{(0)}:=e^{-\varphi_{\epsilon,h}/h}(-\Delta)e^{\varphi_{\epsilon,h}/h}$. Its commutator is $[P_{\epsilon,h}^{(0)},\chi_\ell]w=[-\Delta,\chi_\ell]w-2h^{-1}\langle d\varphi_{\epsilon,h},d\chi_\ell\rangle w$, and
		\begin{equation*}
			\sum_\ell\|[P_{\epsilon,h}^{(0)},\chi_\ell]w\|_{L^2}^2\le K\big(\|dw\|_{L^2}^2+h^{-2}\||d\varphi_{\epsilon,h}|w\|_{L^2}^2+\|w\|_{L^2}^2\big),
		\end{equation*}
		where $K$ is independent of $h$ and $\epsilon$. Apply \eqref{eq:local_convexified_Carleman} to each $\chi_\ell w$ and sum over $\ell$. The identities above show that the sum of the left-hand sides is at least $\epsilon^{-2}\|w\|_{L^2(\mathcal O)}^2+\epsilon^{-1}\|dw\|_{L^2(\mathcal O)}^2+\epsilon^{-1}h^{-2}\||d\varphi_{\epsilon,h}|w\|_{L^2(\mathcal O)}^2+\|\partial_\nu w\|_{L^2(\partial\mathcal O)}^2$. On the right, use $P_{\epsilon,h}^{(0)}(\chi_\ell w)=\chi_\ell P_{\epsilon,h}^{(0)}w+[P_{\epsilon,h}^{(0)},\chi_\ell]w$ and $|a+b|^2\le2|a|^2+2|b|^2$. Let $C_0$ include the constant in the local estimate and this factor of two. The commutator terms are bounded by $C_0K(\|dw\|_{L^2(\mathcal O)}^2+h^{-2}\||d\varphi_{\epsilon,h}|w\|_{L^2(\mathcal O)}^2+\|w\|_{L^2(\mathcal O)}^2)$. Choose $\epsilon_0\in(0,1)$ so that $C_0K\epsilon_0\le1/4$ and $C_0K\epsilon_0^2\le1/4$. For $0<h\le\epsilon\le\epsilon_0$, these three terms are absorbed by the corresponding terms on the left. Increasing the constant on the right by a fixed factor gives
		\begin{equation}\label{eq:global_convexified_laplacian}
			\begin{split}
				&\epsilon^{-2}\|w\|_{L^2(\mathcal O)}^2+\epsilon^{-1}\|dw\|_{L^2(\mathcal O)}^2+\epsilon^{-1}h^{-2}\||d\varphi_{\epsilon,h}|w\|_{L^2(\mathcal O)}^2+\|\partial_\nu w\|_{L^2(\partial\mathcal O)}^2\\
				&\quad\le C_1\big(\|P_{\epsilon,h}^{(0)}w\|_{L^2(\mathcal O)}^2+h^{-1}\|\partial_\nu w\|_{L^2(\Gamma)}^2\big),
			\end{split}
		\end{equation}
		where $C_1$ is independent of $\epsilon$, $h$, and $w$ in this range.
		
		Set $P_{\epsilon,h}:=P_{\epsilon,h}^{(0)}+V$. Substituting $\|P_{\epsilon,h}^{(0)}w\|_{L^2(\mathcal O)}^2\le2\|P_{\epsilon,h}w\|_{L^2(\mathcal O)}^2+2\|V\|_{L^\infty(\mathcal O)}^2\|w\|_{L^2(\mathcal O)}^2$ into \eqref{eq:global_convexified_laplacian} introduces the term $2C_1\|V\|_{L^\infty(\mathcal O)}^2\|w\|_{L^2(\mathcal O)}^2$ on the right. Fix $\epsilon\in(0,\epsilon_0]$ small enough that $2C_1\|V\|_{L^\infty(\mathcal O)}^2\le\frac12\epsilon^{-2}$. Subtracting this term leaves at least $\frac12\epsilon^{-2}\|w\|_{L^2(\mathcal O)}^2$ on the left. Multiply the resulting inequality by two. Since $\epsilon<1$, we may replace the coefficients of the three interior terms by one and restrict the boundary norm on the left to $\Gamma_0$. We obtain
		\begin{equation}\label{eq:convexified_Carleman}
			\begin{split}
				&\|w\|_{L^2(\mathcal O)}^2+h^{-2}\||d\varphi_{\epsilon,h}|w\|_{L^2(\mathcal O)}^2+\|dw\|_{L^2(\mathcal O)}^2+\|\partial_\nu w\|_{L^2(\Gamma_0)}^2\\
				&\quad\le C_\epsilon\big(\|P_{\epsilon,h}w\|_{L^2(\mathcal O)}^2+h^{-1}\|\partial_\nu w\|_{L^2(\Gamma)}^2\big),
			\end{split}
		\end{equation}
		for $0<h<h_0\le\epsilon$, where $\epsilon$ is now fixed and $C_\epsilon$ is independent of $h$ and $w$.
		
		Set $\widetilde w:=e^{\rho_0/\epsilon}w$. The multiplier and its inverse have bounded first derivatives for this fixed $\epsilon$. Since $w=0$ on $\p \mathcal{O}$, $\partial_\nu\widetilde w=e^{\rho_0/\epsilon}\partial_\nu w$ on $\p \mathcal{O}$. Direct conjugation gives $P_{\epsilon,h}\widetilde w=e^{\rho_0/\epsilon}e^{-\varphi/h}(-\Delta+V)e^{\varphi/h}w$. Also, $d\varphi=d\varphi_{\epsilon,h}+(h/\epsilon)d\rho_0$, so $h^{-2}\||d\varphi|w\|_{L^2}^2\le C_\epsilon h^{-2}\||d\varphi_{\epsilon,h}|\widetilde w\|_{L^2}^2+C_\epsilon\|\widetilde w\|_{L^2}^2$. Applying \eqref{eq:convexified_Carleman} to $\widetilde w$ and using the corresponding bounds for $w$ and $dw$ gives
		\begin{equation*}
			\begin{split}
				&\|w\|_{L^2(\mathcal O)}^2+h^{-2}\||d\varphi|w\|_{L^2(\mathcal O)}^2+\|dw\|_{L^2(\mathcal O)}^2+\|\partial_\nu w\|_{L^2(\Gamma_0)}^2\\
				&\quad\le C\big(\|e^{-\varphi/h}(-\Delta+V)e^{\varphi/h}w\|_{L^2(\mathcal O)}^2+h^{-1}\|\partial_\nu w\|_{L^2(\Gamma)}^2\big).
			\end{split}
		\end{equation*}
		For $0<h<h_0\le1$, multiplying this estimate by $h^{-1}$ and using $h^{-3}\ge h^{-2}$ and $h^{-1}\ge1$ gives \eqref{eq:harmonic_Carleman}.
		
		If $\varphi$ is Morse, its critical points on $\overline{\mathcal O}$ are finite, and $|d\varphi(x)|\ge c|x-p|$ near each critical point $p$. Let $\eta$ be supported in such a neighborhood and have zero boundary trace. Its zero extension belongs to $H^1(\mathbb R^2)$, including when $p\in\partial\mathcal O$. Integration by parts in $\mathbb R^2$ gives $2\|\eta\|_{L^2}^2=-2\operatorname{Re}\int(x-p)\cdot\nabla\eta\,\overline\eta\,dx$. The Cauchy--Schwarz and Young inequalities give $h^{-1}\|\eta\|_{L^2}^2\le C(\|d\eta\|_{L^2}^2+h^{-2}\||d\varphi|\eta\|_{L^2}^2)$. Away from the critical points, this follows from the positive lower bound for $|d\varphi|$. Summing with a fixed partition of unity introduces only a term $C\|w\|_{L^2}^2$, which is absorbed into $h^{-1}\|w\|_{L^2}^2$ for small $h$. The unconvexified estimate above controls the resulting gradient and weighted terms. It gives the Morse estimate without the additional factor $h^{-1}$ on the right.
		
		Let $w\in H^2(\mathcal O)\cap H_0^1(\mathcal O)$, and choose $w_k\in C^\infty(\overline{\mathcal O})\cap H_0^1(\mathcal O)$ with $w_k\to w$ in $H^2(\mathcal O)$. For fixed $h$, the conjugated operator is continuous from $H^2(\mathcal O)$ to $L^2(\mathcal O)$, since $V\in L^\infty(\mathcal O)$. The normal traces converge in $H^{1/2}(\partial\mathcal O)$ and hence in $L^2(\partial\mathcal O)$. Applying the estimates to $w_k$ and passing to the limit proves both versions for $w$.
	\end{proof}
	
	\begin{remark}\label{rem:non_morse_Carleman}\label{rem:componentwise_boundary_Carleman}
		The conformal coordinates are chosen separately near the boundary components, and the local estimates are combined by a partition of unity. No conformal map of the whole domain to an annulus is needed, so $\mathcal O$ need not be doubly connected. The estimate \eqref{eq:harmonic_Carleman} applies to both $\varphi_2$ and $-\varphi_2$ on $\widetilde\Omega_2$, since they are nonconstant harmonic functions with zero normal derivative on $\partial D_2$. The additional factor $h^{-1}$ on the right changes only the power of $h^{-1}$ in the upper bound and does not affect the contradiction with the exponential lower bound in Section \ref{sec:contradiction}.
	\end{remark}

	\begin{lemma}\label{lem:boundary_lifting}
		Let $D\Subset\Omega\Subset\widetilde\Omega$, where $D$ and $\widetilde\Omega$ have smooth boundaries and $\operatorname{dist}(\partial D,\partial\widetilde\Omega)>0$. Let $v\in H^2(\widetilde\Omega\setminus\overline D)$ satisfy $v=0$ on $\partial D$, and assume that its outer traces $v|_{\partial\widetilde\Omega}$ and $\partial_\nu v|_{\partial\widetilde\Omega}$ belong to $W^{2,\infty}(\partial\widetilde\Omega)$. Then there exists $g\in W^{2,\infty}(\widetilde\Omega\setminus\overline D)$, supported in a fixed collar of $\partial\widetilde\Omega$, such that $g=v$ and $\partial_\nu g=\partial_\nu v$ on $\partial\widetilde\Omega$, and
		\begin{equation}\label{eq:boundary_lifting_estimate}
			\|g\|_{W^{2,\infty}(\widetilde\Omega\setminus\overline D)}
			\le C\big(
			\|v\|_{W^{2,\infty}(\partial\widetilde\Omega)}
			+\|\partial_\nu v\|_{W^{2,\infty}(\partial\widetilde\Omega)}
			\big),
		\end{equation}
		where $C>0$ depends only on $\widetilde\Omega$ and the chosen collar.
	\end{lemma}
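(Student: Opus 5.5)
The plan is to build $g$ explicitly in normal (collar) coordinates near $\partial\widetilde\Omega$; the interior function $v$ enters only through its two outer traces. Since $\partial\widetilde\Omega$ is smooth and compact, there is $\delta_0>0$, smaller than $\operatorname{dist}(\partial D,\partial\widetilde\Omega)$, such that the map
\[
(y,s)\mapsto y-s\nu(y),\quad y\in\partial\widetilde\Omega,\ s\in[0,\delta_0),
\]
is a smooth diffeomorphism onto a collar $U\subset\overline{\widetilde\Omega}\setminus\overline D$. Here $\nu$ is the outer unit normal to $\widetilde\Omega$. All derivatives of this map and of its inverse are bounded by constants depending only on $\widetilde\Omega$ and $\delta_0$. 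In these coordinates, $\partial_\nu=-\partial_s$ at $s=0$.

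Fix a cutoff $\chi\in C^\infty([0,\infty))$ with $\chi=1$ on $[0,\delta_0/3]$ and $\chi=0$ on $[2\delta_0/3,\infty)$. Set $a:=v|_{\partial\widetilde\Omega}$ and $b:=\partial_\nu v|_{\partial\widetilde\Omega}$, and define
\[
g(y-s\nu(y)):=\chi(s)\big(a(y)-s\,b(y)\big)\quad\text{in }U,\qquad g:=0\ \text{elsewhere in }\widetilde\Omega\setminus\overline D.
\]
At $s=0$ this gives $g=a$. Since $\chi'(0)=0$, it also gives $-\partial_sg=b$, that is, $\partial_\nu g=\partial_\nu v$ on $\partial\widetilde\Omega$. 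The support of $g$ lies in $\{s\le2\delta_0/3\}$, which is a fixed collar disjoint from $\partial D$. The vanishing of $v$ on $\partial D$ is therefore not needed for $g$, but it is consistent: $g=0$ near $\partial D$.

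The estimate comes from differentiating in the coordinates $(y,s)$. Up to second order, derivatives of $g$ fall on $a$ and $b$ only tangentially, up to second order, or on the explicit factors $\chi(s)$ and $s$. Hence
\[
\|g\|_{W^{2,\infty}(U)}\le C\big(\|a\|_{W^{2,\infty}(\partial\widetilde\Omega)}+\|b\|_{W^{2,\infty}(\partial\widetilde\Omega)}\big).
\]
Converting back to Euclidean coordinates multiplies these bounds only by constants coming from the fixed diffeomorphism, which gives the stated estimate. On the curve $\partial\widetilde\Omega$, $W^{2,\infty}$ means two derivatives with respect to arclength that are essentially bounded. Products of $W^{2,\infty}$ functions of $y$ with smooth functions of $(y,s)$ stay in $W^{2,\infty}$, and a function of $y$ alone, extended constantly in $s$, keeps its Lipschitz bounds on the second derivatives. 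This gives $g\in W^{2,\infty}$.

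The only delicate point is that the normal field and the collar map are smooth and uniformly nondegenerate. This holds because $\partial\widetilde\Omega$ is a compact smooth curve, possibly with several components, each handled separately. It also uses that $\delta_0$ is chosen once, which makes $C$ depend only on $\widetilde\Omega$ and the collar.
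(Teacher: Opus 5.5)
Your proposal is correct and is essentially the paper's proof. It uses the same lifting $g=\chi(s)\bigl(a(y)-s\,b(y)\bigr)$ in inward normal coordinates with a cutoff equal to one near $s=0$, checks the traces via $\partial_\nu=-\partial_s$ at $s=0$, and carries the derivative bounds through the fixed collar diffeomorphism. One wording slip: $W^{2,\infty}$ means bounded second derivatives (Lipschitz first derivatives), not ``Lipschitz bounds on the second derivatives''; your construction gives the former, which is what is needed.
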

	
	\begin{proof}
		Let $\kappa(s,y'):=y'-s\nu(y')$ be the inward normal coordinate map in a fixed collar of $\partial\widetilde\Omega$ disjoint from $\overline D$. Put $a:=v|_{\partial\widetilde\Omega}$ and $b:=\partial_\nu v|_{\partial\widetilde\Omega}$. Choose $\eta\in C_c^\infty([0,\delta))$ equal to one near zero, and define $g(\kappa(s,y')):=(a(y')-sb(y'))\eta(s)$ in the collar and $g=0$ outside it. Since $\eta$ vanishes for $s$ near $\delta$, this definition gives $g\in W^{2,\infty}(\widetilde\Omega\setminus\overline D)$. The identity $\partial_\nu=-\partial_s$ at $s=0$ gives the required traces $a$ and $b$.
		
		The boundary $W^{2,\infty}$ norms use tangential derivatives in fixed smooth charts. Tangential derivatives of $g$ of order at most two involve only the corresponding derivatives of $a$ and $b$. Its normal derivatives are $\partial_sg=-b\eta+(a-sb)\eta'$ and $\partial_s^2g=-2b\eta'+(a-sb)\eta''$, and the mixed derivatives involve at most one tangential derivative of $a$ and $b$. The collar map and its inverse have fixed bounded derivatives of the required orders. These formulas give \eqref{eq:boundary_lifting_estimate}.
	\end{proof}
	
	\begin{proposition}\label{prop:Carleman_boundary_terms}
		Let $D\Subset\Omega\Subset\widetilde\Omega$, where $D$ and $\widetilde\Omega$ have smooth boundaries and $\widetilde\Omega\setminus\overline D$ is connected. Let $\varphi\in C^\infty(\overline{\widetilde\Omega}\setminus D)$ be a real-valued nonconstant harmonic function satisfying $\partial_\nu\varphi=0$ on $\partial D$. For every $V\in L^\infty(\widetilde\Omega\setminus\overline D)$ there exist $C>0$ and $h_0>0$ such that, for all $0<h<h_0$ and all $v\in H^2(\widetilde\Omega\setminus\overline D)$ with $v=0$ on $\partial D$ and outer traces $v|_{\partial\widetilde\Omega},\partial_\nu v|_{\partial\widetilde\Omega}\in W^{2,\infty}(\partial\widetilde\Omega)$, the following estimate holds with $N=5$:
		\begin{equation}\label{eq:Carleman_boundary_terms}
			\begin{split}
				&h^{-1}\|v\|_{L^2(\widetilde\Omega\setminus\overline D)}^2
				+h^{-2}\||d\varphi|v\|_{L^2(\widetilde\Omega\setminus\overline D)}^2			+\|dv\|_{L^2(\widetilde\Omega\setminus\overline D)}^2
				+\|\partial_\nu v\|_{L^2(\partial D)}^2\\
				&\le Ch^{-N}\big(
				\|e^{-\varphi/h}(-\Delta+V)e^{\varphi/h}v\|_{L^2(\widetilde\Omega\setminus\overline D)}^2
				+\mathcal B_{\partial\widetilde\Omega}(v)
				\big),
			\end{split}
		\end{equation}
		where
		\begin{equation}\label{eq:def_B_partial_Omega}
			\mathcal B_{\partial\widetilde\Omega}(v)
			:=\|v\|_{W^{2,\infty}(\partial\widetilde\Omega)}^2
			+\|\partial_\nu v\|_{W^{2,\infty}(\partial\widetilde\Omega)}^2.
		\end{equation}
	\end{proposition}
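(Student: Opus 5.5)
The plan is to reduce to Proposition~\ref{prop:harmonic_Carleman} by subtracting a boundary lifting. Let $\mathcal O:=\widetilde\Omega\setminus\overline D$, $\Gamma:=\partial\widetilde\Omega$ and $\Gamma_0:=\partial D$. Given $v$ as in the statement, Lemma~\ref{lem:boundary_lifting} provides $g\in W^{2,\infty}(\mathcal O)$, supported in a fixed collar of $\partial\widetilde\Omega$ disjoint from $\overline D$, with the same outer Dirichlet and Neumann traces as $v$. It also satisfies $\|g\|_{W^{2,\infty}}^2\le C\mathcal B_{\partial\widetilde\Omega}(v)$. We then set $w:=v-g$. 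Since $g$ vanishes near $\partial D$, $w\in H^2(\mathcal O)$ with $w=0$ on $\partial D$ and on $\partial\widetilde\Omega$, so $w\in H^2(\mathcal O)\cap H_0^1(\mathcal O)$. Moreover $\partial_\nu w=0$ on $\Gamma$ and $\partial_\nu w=\partial_\nu v$ on $\Gamma_0$.

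Next we apply \eqref{eq:harmonic_Carleman} to $w$. The $\Gamma$-boundary term vanishes, which gives
\[
h^{-1}\|w\|^2+h^{-2}\||d\varphi|w\|^2+\|dw\|^2+\|\partial_\nu v\|_{L^2(\partial D)}^2\le Ch^{-1}\|e^{-\varphi/h}(-\Delta+V)e^{\varphi/h}w\|^2 .
\]
On the right we write $w=v-g$ and use the triangle inequality. It remains to bound $\|e^{-\varphi/h}(-\Delta+V)e^{\varphi/h}g\|_{L^2}$. Expanding the conjugated operator gives
\[
-\Delta g-2h^{-1}\nabla\varphi\cdot\nabla g-h^{-2}|\nabla\varphi|^2g+Vg .
\]
Since $\varphi$ is fixed and smooth on $\overline{\mathcal O}$, and since $\Delta\varphi=0$ so no first-order-in-$h^{-1}$ zeroth-order term appears, this is bounded by $Ch^{-2}\|g\|_{W^{2,\infty}}$. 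Its square is therefore at most $Ch^{-4}\mathcal B_{\partial\widetilde\Omega}(v)$. Combined with the prefactor $h^{-1}$, this yields a power $h^{-5}$, which explains $N=5$.

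Finally, on the left we pass from $w$ back to $v$ via $v=w+g$. The norms $h^{-1}\|g\|^2$, $h^{-2}\||d\varphi|g\|^2$ and $\|dg\|^2$ are all bounded by $Ch^{-2}\mathcal B_{\partial\widetilde\Omega}(v)$, using that the $L^2$ norms are dominated by the $W^{2,\infty}$ norm on the bounded domain. Since $h^{-2}\le h^{-5}$ for $h<1$, these terms are absorbed into the right-hand side of \eqref{eq:Carleman_boundary_terms}. The Neumann term on $\partial D$ is unaffected, because $g$ vanishes there.

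There is no real obstacle in this argument. The only point needing care is the bookkeeping of powers of $h$, together with checking that the hypotheses of Proposition~\ref{prop:harmonic_Carleman} hold: $\Gamma=\partial\widetilde\Omega\neq\emptyset$ is a union of components, $\partial_\nu\varphi=0$ on $\Gamma_0$, and $\varphi$ is nonconstant. Connectedness of $\mathcal O$ is used only through that proposition. The lifting must also be supported in the collar away from $\partial D$, so that $w$ keeps the zero trace there.
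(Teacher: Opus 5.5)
Your proposal is correct and follows the paper's proof essentially line for line. Like the paper, you subtract the lifting $g$ from Lemma~\ref{lem:boundary_lifting} and apply Proposition~\ref{prop:harmonic_Carleman} to $w=v-g$, whose $\partial\widetilde\Omega$ term vanishes. You then bound the conjugated operator applied to $g$ by $Ch^{-2}\|g\|_{W^{2,\infty}}$ using $\Delta\varphi=0$, and absorb the $g$-contributions on the left, which gives the power $h^{-1}\cdot h^{-4}=h^{-5}$.
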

	
	\begin{proof}
		Set $\mathcal O:=\widetilde\Omega\setminus\overline D$, $\Gamma_0:=\partial D$, and $\Gamma:=\partial\widetilde\Omega$. Let $g$ be the function constructed in Lemma \ref{lem:boundary_lifting}, and set $w:=v-g\in H^2(\mathcal O)\cap H_0^1(\mathcal O)$. Then $w=0$ on $\partial\mathcal O$, $\partial_\nu w=\partial_\nu v$ on $\partial D$, and $\partial_\nu w=0$ on $\partial\widetilde\Omega$. Proposition \ref{prop:harmonic_Carleman} gives
		\begin{equation}\label{eq:Carleman_w_new}
			\begin{split}
				h^{-1}\|w\|_{L^2(\mathcal O)}^2+h^{-2}\||d\varphi|w\|_{L^2(\mathcal O)}^2
				+\|dw\|_{L^2(\mathcal O)}^2+\|\partial_\nu v\|_{L^2(\partial D)}^2\le Ch^{-1}\|P_{\varphi,h}w\|_{L^2(\mathcal O)}^2,
			\end{split}
		\end{equation}
		where $P_{\varphi,h}:=e^{-\varphi/h}(-\Delta+V)e^{\varphi/h}$. The term on $\partial\widetilde\Omega$ vanishes because $\partial_\nu w=0$ there. Since $w=v-g$,
		\begin{equation}\label{eq:Pw_split_new}
			\|P_{\varphi,h}w\|_{L^2(\mathcal O)}^2
			\le2\|P_{\varphi,h}v\|_{L^2(\mathcal O)}^2
			+2\|P_{\varphi,h}g\|_{L^2(\mathcal O)}^2.
		\end{equation}
		Since $\Delta\varphi=0$, we have
		\[
		P_{\varphi,h}g
		=-\Delta g-2h^{-1}\langle d\varphi,dg\rangle
		-h^{-2}|d\varphi|^2g+Vg.
		\]
		For $0<h\le1$, this gives $|P_{\varphi,h}g|\le Ch^{-2}\|g\|_{W^{2,\infty}(\mathcal O)}$ almost everywhere. Lemma \ref{lem:boundary_lifting} gives
		\begin{equation}\label{eq:Pg_est_new}
			\|P_{\varphi,h}g\|_{L^2(\mathcal O)}^2
			\le Ch^{-4}\mathcal B_{\partial\widetilde\Omega}(v).
		\end{equation}
		It also gives
		\[
		h^{-1}\|g\|_{L^2}^2
		+h^{-2}\||d\varphi|g\|_{L^2}^2
		+\|dg\|_{L^2}^2
		\le Ch^{-2}\mathcal B_{\partial\widetilde\Omega}(v).
		\]
		Apply $|a+b|^2\le2|a|^2+2|b|^2$ to $v=w+g$, its gradient, and its product with $|d\varphi|$. The inner normal derivative is unchanged because $g$ vanishes near $\partial D$. Combining these bounds with \eqref{eq:Carleman_w_new}--\eqref{eq:Pg_est_new} gives
		\begin{equation*}
			\begin{split}
				h^{-1}\|v\|_{L^2(\mathcal O)}^2
				+h^{-2}\||d\varphi|v\|_{L^2(\mathcal O)}^2
				+\|dv\|_{L^2(\mathcal O)}^2
				+\|\partial_\nu v\|_{L^2(\partial D)}^2\le Ch^{-1}\|P_{\varphi,h}v\|_{L^2(\mathcal O)}^2
				+Ch^{-5}\mathcal B_{\partial\widetilde\Omega}(v).
			\end{split}
		\end{equation*}
		Since $h^{-1}\le h^{-5}$ for $0<h\le1$, this proves \eqref{eq:Carleman_boundary_terms} with $N=5$.
	\end{proof}

	\section{Unique determination of the obstacle}\label{sec:contradiction}
	
	In this section, we use the extensions and phases constructed in Section \ref{sec:prel}. The extended potentials satisfy $q_1\in C^{1,\alpha}(\overline{\widehat\Omega_1})$, $q_2\in L^\infty(\widetilde\Omega_2)$, and $q_1=q_2$ in $\omega$. The domains $\widetilde\Omega_j$ are connected. The functions $\varphi_j$ are nonconstant harmonic, have the same nonconstant trace on $\partial\widetilde\Omega$, and satisfy $\partial_\nu\varphi_j=0$ on $\partial D_j$. Only $\varphi_1$ is required to be Morse; $\varphi_2$ is the Carleman weight for the second configuration.
	
	\begin{theorem}\label{thm:obstacle_uniqueness}
		Under the assumptions of Theorem \ref{thm:Schiffer}, suppose that $D_1\ne D_2$, and use the extensions constructed in Section \ref{sec:prel}. There exist $\sigma\in\{1,-1\}$, an amplitude in Lemma \ref{lem:CGO_with_boundary_correction}, and $h_*>0$ such that, for every $0<h<h_*$, the CGO solution $u_{1,\sigma,h}$ has a nonzero trace $f_h:=u_{1,\sigma,h}|_{\partial\Omega}$ satisfying
		\[
		\Lambda_{D_1,q_1}f_h\ne\Lambda_{D_2,q_2}f_h.
		\]
	\end{theorem}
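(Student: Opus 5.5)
The proof has two parts: a geometric step showing that $D_1\ne D_2$ forces $\varphi_1\ne\varphi_2$ somewhere in the collar $\omega$, and an analytic step comparing exponential CGO growth with the polynomial Carleman bound.

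\textbf{Geometric step.} We show that $\varphi_1=\varphi_2$ on all of $\omega$ implies $D_1=D_2$. Suppose $\varphi_1=\varphi_2$ in $\omega$. Both functions are harmonic on the connected set $\widetilde\Omega\setminus(\overline{D_1}\cup\overline{D_2})$, or at least on its unbounded-type component $G$ adjacent to $\partial\widetilde\Omega$. By unique continuation they coincide on $G$. Let $\varphi:=\varphi_1=\varphi_2$ there. Each $D_j$ is reached from $G$ only through boundary pieces lying in $\partial D_1\cup\partial D_2$. On each such piece $\varphi$ has zero normal derivative from the side of the domain in which it extends.

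Assume, say, $D_2\setminus\overline{D_1}\neq\emptyset$. Let $U$ be a component of $\widetilde\Omega_1\setminus\overline{G}$ contained in $D_2$. Then $\varphi_1$ is harmonic in $U$. On $\partial U\subset\partial D_2\cup\partial D_1$, it has zero normal derivative: on $\partial D_1$ by construction, and on $\partial D_2$ by continuity from $G$, where $\varphi_1=\varphi_2$ and $\partial_\nu\varphi_2=0$. Green's formula gives $\int_U|\nabla\varphi_1|^2=0$, so $\varphi_1$ is constant on $U$ and hence on $\widetilde\Omega_1$ by analyticity. This contradicts the critical point $p_0$, or the nonconstancy of $\varphi_1$. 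Since $\partial U$ may be only a finite-perimeter set (the intersections of $\partial D_1$ and $\partial D_2$ are not transversal), I would phrase Green's formula as the energy identity $\int_U|\nabla\varphi_1|^2=\int_{\partial^*U}\varphi_1\partial_\nu\varphi_1\,d\mathcal H^1$ on the reduced boundary. This uses the smoothness of $\varphi_1$ up to $\partial D_1$ and of $\varphi_2$ up to $\partial D_2$. Symmetrically we treat $D_1\setminus\overline{D_2}$. Hence if $D_1\ne D_2$, then $\varphi_1-\varphi_2\not\equiv0$ on $\omega$. By continuity there are a ball $B\Subset\omega$, a sign $\sigma$, and $c_0>0$ with $\sigma(\varphi_1-\varphi_2)\ge 2c_0$ on $B$. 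I expect this step, especially the justification of the boundary integration without transversality, to be the main obstacle.

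\textbf{Analytic step.} Take the CGO $u_{1,\sigma,h}$ of Lemma~\ref{lem:CGO_with_boundary_correction} with this $\sigma$ and with $B$ as the set where $a_\sigma\not\equiv0$. Suppose, for contradiction, that $\Lambda_{D_1,q_1}f_h=\Lambda_{D_2,q_2}f_h$. Lemma~\ref{lem:gluing_observation} gives a weak solution $u_2\in H^1(\widetilde\Omega_2)$ of $(-\Delta+q_2)u_2=0$ with $u_2=u_{1,\sigma,h}$ in $\omega$. By elliptic regularity with $L^\infty$ potential, $u_2\in H^2(\widetilde\Omega_2)$ with $u_2=0$ on $\partial D_2$. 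Its outer traces equal those of $u_{1,\sigma,h}$, which is smooth near $\partial\widetilde\Omega$ because $q_1\in C^{1,\alpha}$ and $\partial\widetilde\Omega$ is interior to $\widehat\Omega_1$. Interior Schauder estimates applied to the representation \eqref{eq:CGO_representation_u1} give $\mathcal B_{\partial\widetilde\Omega}(e^{-\sigma\varphi_2/h}u_2)\le Ch^{-M}$ for some fixed $M$. Here we use that $\varphi_1=\varphi_2$ on $\partial\widetilde\Omega$, so $e^{-\sigma\varphi_2/h}|u_{1}|$ is polynomially bounded there. The same polynomial bound holds for the normal derivative, since each derivative contributes only a factor $h^{-1}$ times exponentials whose real weight is $\varphi_1-\varphi_2=0$ on $\partial\widetilde\Omega$. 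For the normal derivative one should also use that $\partial_\nu(\varphi_1-\varphi_2)$ enters only polynomially after differentiation; this is a point to check carefully. Apply Proposition~\ref{prop:Carleman_boundary_terms} with weight $\sigma\varphi_2$, $V=q_2$, and $v=e^{-\sigma\varphi_2/h}u_2$. The interior term vanishes, giving $\|e^{-\sigma\varphi_2/h}u_2\|_{L^2(\widetilde\Omega_2)}\le Ch^{-(N+M)/2}$.

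\textbf{Lower bound on $B$.} On $B$ we have $e^{-\sigma\varphi_2/h}u_2=e^{\sigma(\varphi_1-\varphi_2)/h}\,e^{-\sigma\varphi_1/h}u_{1,\sigma,h}$. Moreover,
\[
e^{-\sigma\varphi_1/h}u_{1,\sigma,h}=e^{\mathsf i\sigma\psi_1/h}(a_\sigma+r_{\sigma,h})-e^{-\mathsf i\sigma\psi_1/h}(\overline{a_\sigma}+\overline{r_{\sigma,h}})+\widetilde r_{\sigma,h}.
\]
Its $L^2(B)$ norm is bounded below by a constant $c_1>0$ for small $h$. To see this, note that $|e^{\mathsf i\sigma\psi_1/h}a_\sigma-e^{-\mathsf i\sigma\psi_1/h}\overline{a_\sigma}|^2=4(\operatorname{Im}(e^{\mathsf i\sigma\psi_1/h}a_\sigma))^2$. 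Shrinking $B$ so that $d\psi_1\ne0$ there, this averages by stationary phase (Riemann–Lebesgue) to $2|a_\sigma|^2$, whose integral is positive; the remainders are $o(1)$. Hence $\|e^{-\sigma\varphi_2/h}u_2\|_{L^2(B)}\ge c_1e^{2c_0/h}$, which contradicts the polynomial bound for $h<h_*$. Nonvanishing of $f_h$ follows in the same way: if $f_h=0$, then the equal-data assumption holds trivially, and the same argument yields a contradiction.
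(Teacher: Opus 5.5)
Your proposal is correct and follows the paper's proof essentially step by step: the finite-perimeter energy identity on the part of $\widetilde\Omega_1$ cut off from $\omega$ by $\overline G$, the phase gap on a ball in $\omega$ where $d\psi_1\neq0$, the $H^2$ gluing, polynomial control of the outer boundary terms, the non-Morse Carleman estimate, and the Riemann--Lebesgue lower bound; for the boundary terms, your ``point to check'' is settled in the paper by rescaled Schauder estimates on balls of radius comparable to $h$ centred on $\partial\widetilde\Omega$, where $|\varphi_1-\varphi_2|/h$ stays bounded. One small correction: a component of $\widetilde\Omega_1\setminus\overline G$ need not lie inside $D_2$, because a region of $\widetilde\Omega\setminus\overline{D_1\cup D_2}$ enclosed by $D_1\cup D_2$ away from $\omega$ can belong to the same component; your argument never uses that containment, so take any component meeting $D_2\setminus\overline{D_1}$, or the whole set $D_1^*=\widetilde\Omega_1\setminus\overline G$ as the paper does.
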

	
	We first prove that equality of the auxiliary harmonic functions in $\omega$ implies equality of the obstacles. When the obstacles differ, the sign of their difference determines which CGO phase to use. We then compare the weighted solution for the second configuration with this CGO solution.
	
	\subsection{Recovery of the obstacle from phase equality}
	
	Let
	\[
	F:=\widetilde\Omega\setminus\overline{(D_1\cup D_2)},
	\]
	and let $G$ be the union of all connected components of $F$ that meet $\omega$. Hence, $\omega\subset G$, and every connected component of $G$ contains a nonempty open part of $\omega$. This definition includes the collars of all components of $\partial\Omega$. A sufficiently thin two-sided collar of $\partial\Omega$ is disjoint from both obstacles, and each of its components meets $\omega$, so the whole collar lies in $G$.
	
	For $j=1,2$, define
	\[
	D_j^*:=(\widetilde\Omega\setminus\overline G)
	\cap(\widetilde\Omega\setminus\overline{D_j}).
	\]
	The set $D_j^*$ consists of points in the exterior of $D_j$ that lie outside $\overline G$. It is open. Since $\omega$ and the two-sided collar of $\partial\Omega$ lie in $G$, one has $\overline{D_j^*}\Subset\Omega$. We will apply an energy identity on these sets. Their boundaries need not be smooth, and the following argument does not require $F$ or $G$ to have finitely many connected components.
	
	\begin{lemma}\label{lem:difference_nonempty}
		If $D_1\neq D_2$, then at least one of $D_1\setminus\overline{D_2}$ and $D_2\setminus\overline{D_1}$ is nonempty.
	\end{lemma}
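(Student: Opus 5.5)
We argue by contraposition: assume that both $D_1\setminus\overline{D_2}$ and $D_2\setminus\overline{D_1}$ are empty, and show that $D_1=D_2$. The emptiness gives $D_1\subset\overline{D_2}$ and $D_2\subset\overline{D_1}$. To pass from these closure inclusions to equality of the open sets, we use the regularity in Definition \ref{def:admissible_obstacle}. Since $\partial D_j$ is a $C^\infty$ curve, each $D_j$ is \emph{regular open}, that is, $D_j=\operatorname{int}\overline{D_j}$. Indeed, near a boundary point the set $D_j$ is, in a smooth chart, one side of a smooth embedded curve. Hence every boundary point of $D_j$ has points of $\mathbb R^2\setminus\overline{D_j}$ arbitrarily close to it, namely points on the other side of the curve. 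So no boundary point lies in the interior of $\overline{D_j}$.

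With this in hand the argument is short. From $D_1\subset\overline{D_2}$ and the openness of $D_1$ we get $D_1\subset\operatorname{int}\overline{D_2}=D_2$. Symmetrically, $D_2\subset D_1$, so $D_1=D_2$.

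The only step needing care is the regular-openness claim. I would verify it using the local graph description of $\partial D_j$. Near $x_0\in\partial D_j$ there is a diffeomorphism taking $\partial D_j$ to a line. Condition \eqref{item 2 def adm}, together with the fact that $D_j$ is open with boundary exactly this curve, forces $D_j$ to lie locally on one side only. If both sides belonged to $D_j$, the curve would not be part of the topological boundary. I would take this smooth-boundary convention, that $D_j$ lies locally on one side of $\partial D_j$, as part of the meaning of the definition. Granting it, the lemma is immediate, and no further tools from the paper are needed.
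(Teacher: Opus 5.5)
Your argument is correct and is essentially the paper's: both proofs reduce the lemma to the regular-openness $D_j=\operatorname{int}\overline{D_j}$ of sets with smooth boundary. The paper passes through $\overline{D_1}=\overline{D_2}$, while you use $D_1\subset\operatorname{int}\overline{D_2}$ directly. One caveat: your remark that openness plus a smooth topological boundary forces $D_j$ to lie on one side is false, since $B(0,2)\setminus\partial B(0,1)$ satisfies the literal conditions of the definition with both sides of $\partial B(0,1)$ in the set. So the one-sided property really is a convention built into ``$C^\infty$ boundary'', which you end up assuming and which the paper also assumes implicitly.
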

	
	\begin{proof}
		If both differences were empty, then $D_1\subset\overline{D_2}$ and $D_2\subset\overline{D_1}$, giving $\overline{D_1}=\overline{D_2}$. Every open set with smooth boundary satisfies $D_j=\operatorname{int}\overline{D_j}$, so equality of the closures gives $D_1=D_2$.
	\end{proof}

	\begin{lemma}\label{lem:trapped_sets_nonempty}
		If $D_1\neq D_2$, then at least one of $D_1^*$ and $D_2^*$ has a nonempty interior.
	\end{lemma}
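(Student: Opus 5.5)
By Lemma \ref{lem:difference_nonempty} we may assume, after relabelling, that $U:=D_1\setminus\overline{D_2}$ is nonempty. It is open. The plan is to show that some nonempty open piece of $U$ lies in $D_2^*$. By definition,
\[
D_2^*=(\widetilde\Omega\setminus\overline G)\cap(\widetilde\Omega\setminus\overline{D_2}),
\]
and $U\subset\widetilde\Omega\setminus\overline{D_2}$. So it suffices to find a nonempty open subset of $U$ disjoint from $\overline G$.

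\textbf{Step 1: $U$ is disjoint from $G$.} Indeed $G\subset F=\widetilde\Omega\setminus\overline{(D_1\cup D_2)}$, while $U\subset D_1$, so $U\cap G=\emptyset$.

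\textbf{Step 2: reduction to a boundary statement.} Since $U$ is open and $U\cap G=\emptyset$, we get $U\cap\overline G=U\cap\partial G$. It therefore remains to exclude the possibility that $\partial G$ contains all of $U$, and in fact we will show $U\cap\partial G=\emptyset$. This is the main point. Take $x\in U$. Because $x\in D_1$ and $D_1$ is open, there is a ball $B(x,r)\subset D_1$. Every point of $G$ lies outside $\overline{D_1}$, so $B(x,r)\cap G=\emptyset$. Hence $x\notin\overline G$.

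\textbf{Step 3: conclusion.} Steps 1 and 2 give $U\cap\overline G=\emptyset$, so $U\subset D_2^*$. Thus $D_2^*$ contains the nonempty open set $U$ and has nonempty interior; indeed $D_2^*$ is itself open, as noted above. In the symmetric case $D_2\setminus\overline{D_1}\neq\emptyset$, the same argument shows that $D_1^*$ has nonempty interior.

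The argument is elementary. The only place requiring care is Step 2, where the fact that $D_1$ is open, and not merely its closure, keeps $U$ a positive distance from $G$. No smoothness of $\partial G$ and no finiteness of the components of $F$ is used.
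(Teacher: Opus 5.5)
Your proposal is correct and follows essentially the same route as the paper. Both start from Lemma \ref{lem:difference_nonempty}, use that the open obstacle is disjoint from $G$ and therefore from $\overline G$, and conclude that the nonempty open difference (for you $D_1\setminus\overline{D_2}$, in the paper the symmetric case $D_2\setminus\overline{D_1}$) lies in the opposite trapped set.
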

	
	\begin{proof}
		By Lemma \ref{lem:difference_nonempty}, at least one of $D_2\setminus\overline{D_1}$ and $D_1\setminus\overline{D_2}$ is nonempty. Suppose $D_2\setminus\overline{D_1}\ne\emptyset$. Since $G\cap D_2=\emptyset$ and $D_2$ is open, we also have $\overline G\cap D_2=\emptyset$: any point in that intersection would have a ball contained in $D_2$ that meets $G$. It follows that $D_2\setminus\overline{D_1}\subset D_1^*$. The set on the left is open and nonempty. The other case gives $D_1\setminus\overline{D_2}\subset D_2^*$ by the same argument.
	\end{proof}
	
	We use $\mathcal L^2$ for Lebesgue measure on $\R^2$ and $\mathcal H^1$ for one-dimensional Hausdorff measure. If a measurable set $A\subset\mathbb R^2$ has locally finite perimeter, its characteristic function has a distributional derivative $D\chi_A$ which is a vector-valued Radon measure. The reduced boundary $\partial^*A$ consists of those $x\in\operatorname{supp}|D\chi_A|$ for which $\nu_A(x):=-\lim_{r\downarrow0}\frac{D\chi_A(B(x,r))}{|D\chi_A|(B(x,r))}$ exists and has length one. At these points, the rescaled sets converge locally in measure to the half-plane $\{y:y\cdot\nu_A(x)<0\}$. The vector $\nu_A$ is the measure-theoretic exterior normal, and $D\chi_A=-\nu_A\mathcal H^1\!\restriction_{\partial^*A}$, with $|D\chi_A|=\mathcal H^1\!\restriction_{\partial^*A}$. We use \cite[Definitions 3.35 and 3.54, Theorem 3.59]{ambrosio2000functions} for finite perimeter, reduced boundaries, and the structure theorem, and \cite[Theorem 3.36]{ambrosio2000functions} for Gauss--Green. Our exterior normal is the negative of the inner normal used in that reference.
	
	\begin{lemma}\label{lem:trapped_sets_finite_perimeter}
		The sets $D_1^*$ and $D_2^*$ are bounded sets of finite perimeter. For $j=1,2$, one has $\mathcal H^1(\partial^*D_j^*\setminus(\partial D_j\cup\partial G))=0$ and $\mathcal H^1(\partial^*D_j^*\setminus(\partial D_1\cup\partial D_2))=0$.
	\end{lemma}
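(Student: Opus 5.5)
The plan is to show that the topological boundary of $D_j^*$ lies inside $\partial D_1\cup\partial D_2$, which is a finite union of smooth closed curves. A bounded set whose topological boundary has finite $\mathcal H^1$ measure has finite perimeter. The reduced boundary is then contained in the topological boundary, and both measure statements follow almost directly.

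\textbf{Step 1: the boundary of $D_j^*$.} Since $D_j^*$ is open, $\partial D_j^*=\overline{D_j^*}\setminus D_j^*$. Let $x\in\partial D_j^*$. Because $\overline{D_j^*}\Subset\Omega$, the point $x$ lies in $\Omega$. Exactly one of the following holds: $x\in\partial D_j$, or $x\in\partial G$, or $x$ lies in neither.

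In the last case, $x\notin D_j$, since $D_j$ is open and disjoint from $D_j^*$; hence $x\in\widetilde\Omega\setminus\overline{D_j}$. We also have $x\in\overline{D_j^*}\subset\widetilde\Omega\setminus G$. Since $x\notin\partial G$, this gives $x\in\widetilde\Omega\setminus\overline G$. Both of these sets are open, so $x\in D_j^*$, which is a contradiction. Therefore $\partial D_j^*\subset\partial D_j\cup\partial G$.

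\textbf{Step 2: the boundary of $G$.} We show that $\partial G\cap\widetilde\Omega\subset\partial D_1\cup\partial D_2$. The set $G$ is a union of connected components of the open set $F$, and the components of an open planar set are open. If $y\in\partial G\cap F$, then $y$ lies in some component $F'$ of $F$. The point $y$ is a limit of points of $G$, and $F'$ is an open neighbourhood of $y$, so $F'$ meets $G$. Hence $F'\subset G$ and $y\in G$, which contradicts $y\in\partial G$ because $G$ is open.

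Thus $\partial G\cap\widetilde\Omega\subset\widetilde\Omega\setminus F=\overline{D_1\cup D_2}\cap\widetilde\Omega$. Since $G\cap(D_1\cup D_2)=\emptyset$ and $D_1\cup D_2$ is open, no point of $D_1\cup D_2$ lies in $\overline G$. Hence $\partial G\cap\widetilde\Omega\subset\partial D_1\cup\partial D_2$. This step does not need $F$ to have finitely many components.

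Combining Steps 1 and 2, and recalling $\overline{D_j^*}\subset\Omega$, we get $\partial D_j^*\subset\partial D_1\cup\partial D_2$.

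\textbf{Step 3: finite perimeter.} I expect this to be the only real obstacle, since it requires a precise citation. The set $D_j^*$ is bounded and open, and its topological boundary has $\mathcal H^1(\partial D_j^*)\le\mathcal H^1(\partial D_1)+\mathcal H^1(\partial D_2)<\infty$, the curves being smooth and finite in number.

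The standard fact we use is that a set $E\subset\mathbb R^2$ with $\mathcal H^1(\partial E)<\infty$ has finite perimeter, with $P(E)\le C\,\mathcal H^1(\partial E)$. This can be cited from Federer's criterion, or via \cite[Proposition 3.62]{ambrosio2000functions}. A self-contained route is also available. For $\phi\in C_c^1(\mathbb R^2;\mathbb R^2)$ with $|\phi|\le1$, cover $\partial D_j^*$ by finitely many balls $B_i$ of radii $r_i$ with $\sum r_i\le C\,\mathcal H^1(\partial D_j^*)+\varepsilon$. Let $\chi_\varepsilon$ be a smooth cutoff equal to $1$ on $\bigcup B_i$, supported in $\bigcup 2B_i$, with $\int|\nabla\chi_\varepsilon|\le C\sum r_i$. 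Since $(1-\chi_\varepsilon)\phi$ has compact support in either $D_j^*$ or its exterior, the divergence theorem gives
\[
\int_{D_j^*}\operatorname{div}\phi\,dx=\int_{D_j^*}\operatorname{div}(\chi_\varepsilon\phi)\,dx .
\]
The right-hand side is bounded by $C\sum r_i+\|\operatorname{div}\phi\|_\infty\,\mathcal L^2\bigl(\bigcup 2B_i\bigr)$. The area term tends to zero as the radii tend to zero. Letting $\varepsilon\to0$ gives the bound $|D\chi_{D_j^*}|(\mathbb R^2)\le C\,\mathcal H^1(\partial D_j^*)$.

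\textbf{Step 4: the measure statements.} Always $\partial^*D_j^*\subset\operatorname{supp}|D\chi_{D_j^*}|\subset\partial D_j^*$. By Steps 1 and 2, $\partial^*D_j^*\subset\partial D_j\cup\partial G$ and $\partial^*D_j^*\subset\partial D_1\cup\partial D_2$. Both exceptional sets are therefore empty, and in particular have $\mathcal H^1$ measure zero.
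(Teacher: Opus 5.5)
Your proof is correct, but it takes a more elementary route than the paper. The paper works measure-theoretically. It shows $\partial G\subset\partial\widetilde\Omega\cup\partial D_1\cup\partial D_2$ and proves that $G$ has finite perimeter by mollification. It then replaces $D_j^*$ by the almost-everywhere equal set $\widetilde\Omega\cap G^c\cap D_j^c$ and applies the reduced-boundary intersection formula \cite[Theorem 16.3]{maggi2012sets} twice. This places $\partial^*D_j^*$ in $\partial\widetilde\Omega\cup\partial G\cup\partial D_j$ up to an $\mathcal H^1$-null set, and $\partial\widetilde\Omega$ is discarded because $\chi_{D_j^*}$ vanishes near it. You instead stay with topological boundaries. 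Steps 1 and 2, together with $\overline{D_j^*}\Subset\Omega$ (which replaces the paper's collar argument), give the exact inclusion $\partial D_j^*\subset\partial D_1\cup\partial D_2$. Finite perimeter then follows from $\mathcal H^1(\partial D_j^*)<\infty$, and $\partial^*D_j^*\subset\operatorname{supp}|D\chi_{D_j^*}|\subset\partial D_j^*$ makes both exceptional sets empty, not merely null. Your covering argument in Step 3 is essentially the paper's mollification estimate for $G$, applied directly to $D_j^*$. That is why you can skip the intersection calculus and the identification of reduced boundaries of a.e.-equal sets. The paper's route gains only finite perimeter of $G$, which is not used later. Two small wording points. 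In Step 1, the three cases are exhaustive but not mutually exclusive. In Step 3, $(1-\chi_\varepsilon)\phi$ should be split into a part compactly supported in $D_j^*$ and a part compactly supported in the exterior; this is possible because its support is a compact subset of the disjoint union of these two open sets.
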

	
	\begin{proof}
		The smooth sets $D_1$, $D_2$, and $\widetilde\Omega$ have finite perimeter, and their boundaries have zero area. The set $F$ agrees almost everywhere with $\widetilde\Omega\cap D_1^c\cap D_2^c$ and has finite perimeter. Since $F$ is open, its connected components are open. Both $G$ and $F\setminus G$ are unions of these components, so $G$ is also closed relative to $F$. Every point of $\partial G$ belongs to $\overline F\setminus F$. In particular, $\partial G\subset\partial F\subset\Sigma$, where $\Sigma:=\partial\widetilde\Omega\cup\partial D_1\cup\partial D_2$ is a finite union of smooth compact curves.
		
		We verify the finite perimeter of $G$ without counting its components. Let $\eta_\delta$ be a standard mollifier supported in $B(0,\delta)$ and put $g_\delta:=\chi_G*\eta_\delta$. If $\operatorname{dist}(x,\partial G)>\delta$, the characteristic function is constant on $B(x,\delta)$, so $\nabla g_\delta(x)=0$. Also, $\|\nabla g_\delta\|_{L^\infty}\le C\delta^{-1}$. The fixed smooth curves forming $\Sigma$ have a $\delta$-neighborhood of area at most $C\delta$. It follows that $\int_{\mathbb R^2}|\nabla g_\delta|\,dx\le C$. Since $g_\delta\to\chi_G$ in $L^1$, passage to the limit in $\int g_\delta\operatorname{div}X$ for $X\in C_c^1(\mathbb R^2;\mathbb R^2)$ with $|X|\le1$ proves $|D\chi_G|(\mathbb R^2)\le C$. Thus $G$ has finite perimeter. The inclusion $\partial G\subset\Sigma$ also gives $\mathcal L^2(\partial G)=0$.
		
		Set $\widehat D_j:=\widetilde\Omega\cap G^c\cap D_j^c$, where the complements are taken in $\mathbb R^2$. From the definition of $D_j^*$, one has $D_j^*\subset\widehat D_j$ and $\widehat D_j\setminus D_j^*\subset\partial G\cup\partial D_j$. We proved that $\mathcal L^2(\partial G)=0$, and the smooth boundary $\partial D_j$ also has zero area. The characteristic functions of $D_j^*$ and $\widehat D_j$ agree almost everywhere. Finite-perimeter intersection calculus gives finite perimeter of the bounded set $\widehat D_j$. Since the two characteristic functions are the same element of $L^1(\mathbb R^2)$, their distributional derivatives and total variation measures agree. The definition of the reduced boundary then identifies their reduced boundaries and measure-theoretic normals, up to an $\mathcal H^1$-null set. This proves that $D_j^*$ has finite perimeter.
		
		For finite-perimeter sets $A$ and $B$, the intersection formula in \cite[Theorem 16.3]{maggi2012sets} gives
		\begin{equation}\label{eq:reduced_boundary_intersection_simple}
			\mathcal H^1\big(\partial^*(A\cap B)\setminus(\partial^*A\cup\partial^*B)\big)=0.
		\end{equation}
		Apply \eqref{eq:reduced_boundary_intersection_simple} first to $\widetilde\Omega\cap G^c$ and then to its intersection with $D_j^c$. This gives $\partial^*\widehat D_j\subset\partial^*\widetilde\Omega\cup\partial^*(G^c)\cup\partial^*(D_j^c)$ up to an $\mathcal H^1$-null set. For a finite-perimeter set and its complement, the distributional derivatives have opposite signs, and the total variation measures agree. Their reduced boundaries coincide, with opposite normals. The smooth sets $\widetilde\Omega$ and $D_j$ have their classical boundaries as reduced boundaries, and $\partial^*G\subset\partial G$ because $G$ is open. Using the almost-everywhere identification of $D_j^*$ with $\widehat D_j$, we obtain $\partial^*D_j^*\subset\partial\widetilde\Omega\cup\partial G\cup\partial D_j$ up to an $\mathcal H^1$-null set.
		
		To exclude the outer boundary, choose a sufficiently thin collar of $\partial\widetilde\Omega$ inside $\widetilde\Omega$. Since $\Omega\Subset\widetilde\Omega$, this collar lies in $\omega\subset G$ and is disjoint from $D_j^*$. Outside $\widetilde\Omega$, the characteristic function of $D_j^*$ is zero by definition. It is zero on a full neighborhood of $\partial\widetilde\Omega$, so its distributional derivative has no support there. The preceding inclusion reduces to $\partial^*D_j^*\subset\partial G\cup\partial D_j$ up to an $\mathcal H^1$-null set. Finally, $\partial G\subset\partial\widetilde\Omega\cup\partial D_1\cup\partial D_2$, and the outer boundary has already been excluded. This proves both assertions.
	\end{proof}
	
	For a finite-perimeter set $\mathcal T$, the normal derivative used below is
	\[
	\partial_{\nu_{\mathcal T}}\varphi :=\nabla\varphi_{\mathrm{tr}}\cdot\nu_{\mathcal T}
	\quad\text{on }\partial^*\mathcal T,
	\]
	where $\nu_{\mathcal T}$ is the measure-theoretic exterior normal. When a boundary point lies on $\partial D_j$, the gradient trace of $\varphi_j$ is taken from $\widetilde\Omega_j$. Since $\varphi_j$ is smooth up to $\partial D_j$, it admits a $C^2$ extension across that boundary with the same gradient trace. This extension allows us to apply Gauss--Green on $D_j^*$.
	
	\begin{lemma}\label{lem:neumann_finite_perimeter}
		Let $\mathcal T\subset\mathbb R^2$ be a bounded set of finite perimeter. Let $U\subset\mathbb R^2$ be an open neighborhood of $\overline{\mathcal T}$, and let $\varphi\in C^2(U)$ be real-valued and satisfy $\Delta\varphi=0$ for $\mathcal L^2$-almost every point of $\mathcal T$. Assume that $\nabla\varphi\cdot\nu_{\mathcal T}=0$ for $\mathcal H^1$-a.e. point of $\partial^*\mathcal T$. Then
		\begin{equation}\label{eq:integ_nabla_varphi=0}
			\int_{\mathcal T}|\nabla\varphi|^2\,dx=0.
		\end{equation}
	\end{lemma}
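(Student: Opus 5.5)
The plan is to apply the Gauss--Green theorem for sets of finite perimeter, \cite[Theorem 3.36]{ambrosio2000functions}, to the vector field $X:=\varphi\nabla\varphi$. Since $\varphi\in C^2(U)$ and $U$ is an open neighborhood of the compact set $\overline{\mathcal T}$, the first step is to fix a cutoff $\theta\in C_c^\infty(U)$ with $\theta=1$ on a neighborhood of $\overline{\mathcal T}$. We then replace $X$ by $\theta\varphi\nabla\varphi\in C_c^1(\mathbb R^2;\mathbb R^2)$. This changes nothing on $\mathcal T$ or on $\partial^*\mathcal T\subset\overline{\mathcal T}$, so no further regularity of $\varphi$ is needed.

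With the exterior-normal convention $D\chi_{\mathcal T}=-\nu_{\mathcal T}\mathcal H^1\!\restriction_{\partial^*\mathcal T}$, the Gauss--Green formula reads
\[
\int_{\mathcal T}\operatorname{div}X\,dx=\int_{\partial^*\mathcal T}X\cdot\nu_{\mathcal T}\,d\mathcal H^1 .
\]
Indeed, by the definition of the distributional derivative, $\int\chi_{\mathcal T}\operatorname{div}X\,dx=-\int X\cdot dD\chi_{\mathcal T}$. The boundary integrand is $\varphi\,\nabla\varphi\cdot\nu_{\mathcal T}$, which vanishes $\mathcal H^1$-a.e. on $\partial^*\mathcal T$ by hypothesis. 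It is also bounded, because $\varphi$ and $\nabla\varphi$ are continuous on the compact set $\overline{\mathcal T}$, and $\mathcal H^1(\partial^*\mathcal T)$ is the perimeter of $\mathcal T$, which is finite. Hence the boundary integral is zero.

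On the interior side, $\operatorname{div}X=|\nabla\varphi|^2+\varphi\Delta\varphi$ pointwise in a neighborhood of $\overline{\mathcal T}$, and $\Delta\varphi=0$ for $\mathcal L^2$-a.e. point of $\mathcal T$. Therefore $\int_{\mathcal T}\operatorname{div}X\,dx=\int_{\mathcal T}|\nabla\varphi|^2\,dx$, which gives \eqref{eq:integ_nabla_varphi=0}. All integrands are bounded on the bounded set $\mathcal T$, so every integral is finite.

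The only delicate point is the sign and normal convention in the reference, where the inner normal is used. This convention is irrelevant here, because the boundary term vanishes identically whichever sign is used. I therefore expect no real obstacle. The substantive work lies instead in verifying the hypotheses when this lemma is applied to $D_j^*$. There one must show that the gradient trace of $\varphi_j$ is normal-free $\mathcal H^1$-a.e. on $\partial^*D_j^*$; this uses Lemma \ref{lem:trapped_sets_finite_perimeter} together with a $C^2$ extension of $\varphi_j$ across $\partial D_j$. That verification is not part of the present statement.
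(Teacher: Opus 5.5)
Your proposal is correct and follows essentially the same argument as the paper. The paper likewise cuts off to $X=\chi\varphi\nabla\varphi\in C_c^1$, applies the finite-perimeter Gauss--Green formula, uses $\Delta\varphi=0$ a.e.\ in $\mathcal T$ for the interior term, and uses the vanishing normal trace to kill the boundary term.
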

	
	\begin{proof}
		Choose $\chi\in C_c^\infty(U)$ equal to one on a neighborhood of the compact set $\overline{\mathcal T}$. The vector field $X:=\chi\varphi\nabla\varphi$, extended by zero outside $U$, belongs to $C_c^1(\mathbb R^2;\mathbb R^2)$. Since $\Delta\varphi=0$ almost everywhere in $\mathcal T$, the Gauss--Green formula gives
		\begin{equation*}
			\int_{\mathcal T}|\nabla\varphi|^2\,dx=\int_{\mathcal T}\operatorname{div}X\,dx=\int_{\partial^*\mathcal T}\varphi\nabla\varphi\cdot\nu_{\mathcal T}\,d\mathcal H^1=0.
		\end{equation*}
	\end{proof}
	
	\begin{lemma}\label{lem:phase_equality_in_G}
		If $\varphi_1=\varphi_2$ in $\omega$, then $\varphi_1=\varphi_2$ in $G$.
	\end{lemma}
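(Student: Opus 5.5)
The plan is to use unique continuation for harmonic functions on each connected component of $G$. Both $\varphi_1$ and $\varphi_2$ are defined and harmonic in $G$. Indeed, $G\subset F=\widetilde\Omega\setminus\overline{(D_1\cup D_2)}$, which lies in both $\widetilde\Omega_1=\widetilde\Omega\setminus\overline{D_1}$ and $\widetilde\Omega_2=\widetilde\Omega\setminus\overline{D_2}$. On $\widetilde\Omega_1$, $\varphi_1$ is the real part of the holomorphic function $\Phi_1$, hence harmonic. On $\widetilde\Omega_2$, $\varphi_2$ is harmonic by construction. So the difference $w:=\varphi_1-\varphi_2$ is a real-valued harmonic function, and in particular real analytic, on the open set $G$.

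Let $G'$ be any connected component of $G$. By the definition of $G$, $G'$ is a connected component of $F$ that meets $\omega$. Since $\omega$ is open, $G'\cap\omega$ is a nonempty open subset of $G'$. The hypothesis $\varphi_1=\varphi_2$ in $\omega$ gives $w=0$ on this nonempty open set. The identity theorem for real analytic functions on the connected open set $G'$ then forces $w\equiv0$ in $G'$. Alternatively, one can take the set of points where $w$ vanishes to infinite order; it is open, relatively closed, and nonempty, so it is all of $G'$. Since $G$ is the union of such components, $\varphi_1=\varphi_2$ throughout $G$.

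The only point needing care is checking that $G$ sits inside the common domain of harmonicity of both functions, and that each component of $G$ genuinely meets $\omega$ in an open set. The first holds because $F$ excludes both closed obstacles. The second holds because components of the open set $F$ are open and $\omega$ is open. Neither finiteness of the number of components nor regularity of $\partial G$ is needed, since the argument is componentwise and purely interior. I therefore do not expect a serious obstacle. The substantive use of this equality comes later, when it is transferred to $\partial^*D_j^*$ via Lemmas \ref{lem:trapped_sets_finite_perimeter} and \ref{lem:neumann_finite_perimeter}.
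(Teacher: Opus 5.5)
Your proposal is correct and follows the paper's proof: $w=\varphi_1-\varphi_2$ is harmonic on $G\subset F\subset\widetilde\Omega_1\cap\widetilde\Omega_2$. Each connected component of $G$ meets $\omega$ in a nonempty open set where $w$ vanishes, so unique continuation gives $w\equiv0$ on every component.
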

	
	\begin{proof}
		The difference $w:=\varphi_1-\varphi_2$ is harmonic in $G$. Every connected component $G_\alpha$ meets $\omega$ in a nonempty open set where $w=0$. Unique continuation gives $w=0$ in each $G_\alpha$.
	\end{proof}
	
	\begin{lemma}\label{lem:trapped_sets_zero_trace}
		Assume that $\varphi_1=\varphi_2$ in $\omega$. Then
		\[
		\partial_{\nu_{D_j^*}}\varphi_j=0
		\quad\text{for }\mathcal H^1\text{-almost every point of }\partial^*D_j^* \text{ and }j=1,2,
		\]
		with the gradient trace taken as described above.
	\end{lemma}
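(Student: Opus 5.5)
By Lemma \ref{lem:trapped_sets_finite_perimeter}, $\mathcal H^1$-almost every point of $\partial^*D_j^*$ lies in $\partial D_j\cup\partial G$, and also in $\partial D_1\cup\partial D_2$. I would therefore split the reduced boundary, up to an $\mathcal H^1$-null set, into two pieces: points of $\partial^*D_j^*\cap\partial D_j$, and points of $\partial^*D_j^*\cap\partial G\setminus\partial D_j$. The latter are contained in $\partial D_k$ with $k\ne j$.

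\textbf{First piece.} At $\mathcal H^1$-a.e. point $x\in\partial^*D_j^*\cap\partial D_j$, the blow-up of $D_j^*$ is a half-plane. The blow-up of the smooth set $\widetilde\Omega\setminus\overline{D_j}$, which contains $D_j^*$, is the half-plane bounded by the tangent line to $\partial D_j$ at $x$. A half-plane contained in a half-plane must coincide with it, so $\nu_{D_j^*}(x)=\pm\nu(x)$. The paper's convention gives the plus sign, but either sign suffices. Since $\partial_\nu\varphi_j=0$ on $\partial D_j$ with the gradient trace taken from $\widetilde\Omega_j$, we obtain $\nabla\varphi_j\cdot\nu_{D_j^*}=0$ there.

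\textbf{Second piece.} Take $x\in\partial^*D_j^*\cap\partial D_k\setminus\partial D_j$ with $k\ne j$. Then $x\notin\overline{D_j}$, so $\varphi_j$ is smooth and harmonic near $x$. I claim $x\in\partial G$ as well. Indeed, $x\in\overline{D_j^*}$ lies outside $\overline{D_j}$, and the only sets near $x$ are pieces of $D_k$, of $G$, and of the other components of $F$. The real task is to show that, at density points, the side of $\partial D_k$ opposite to $D_j^*$ is $G$. The same blow-up argument as in the first piece gives $\nu_{D_j^*}(x)=\pm\nu_{D_k}(x)$. The half-plane complementary to the blow-up of $D_j^*$ has density $1/2$, and it is filled near $x$ by $D_k\cup G$. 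Since $D_j^*\cap G=\emptyset$ and $D_j^*\cap D_k$ is open, one of these must occupy it in density.

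\emph{Subcase where $G$ lies on the side opposite to $D_j^*$.} By Lemma \ref{lem:phase_equality_in_G}, $\varphi_j=\varphi_k$ on $G$. Both functions are smooth up to $\partial D_k$ from the $G$ side near $x$, since $\varphi_j$ is smooth near $x$. Their gradients therefore agree at $x$, and $\partial_\nu\varphi_k=0$ on $\partial D_k$ gives $\nabla\varphi_j(x)\cdot\nu_{D_k}(x)=0$.

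\emph{Subcase where $D_k$ lies on the side opposite to $D_j^*$.} Then near $x$ both sides of $\partial D_k$ avoid $G$, which forces $x\notin\partial G$. This contradicts the inclusion $\partial^*D_j^*\subset\partial D_j\cup\partial G$ up to a null set, so this subcase occurs only on an $\mathcal H^1$-null set.

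\textbf{Main obstacle.} I expect the hardest step to be making the second piece rigorous at non-smooth points, where $\partial D_1$ and $\partial D_2$ are tangent or oscillate. There $G$ may meet $x$ only in a thin set, and it is not immediate that $\varphi_j=\varphi_k$ propagates to equality of gradients. I would handle this using density: $\mathcal H^1$-a.e. on $\partial D_k\cap\partial^*D_j^*\cap\partial G$, the set $G$ has density $1/2$ at $x$. Because $\nabla(\varphi_j-\varphi_k)$ is continuous up to $\partial D_k$ from the exterior of $D_k$ and vanishes on $G$, it vanishes at every point of $\overline G$ in that one-sided neighborhood, in particular at $x$. This avoids any transversality assumption.
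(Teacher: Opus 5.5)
Your proposal is correct in substance. The decomposition of $\partial^*D_j^*$ and the first piece (blow-up of $D_j^*$ nested in the blow-up of the exterior of $D_j$) are the same as in the paper. The second piece follows a different route, and your description of the local geometry there is partly wrong, though the argument can be completed.

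\textbf{How the paper handles the second piece.} Since $x\notin\overline{D_j}$, it takes a small ball $W$ around $x$, disjoint from $\overline{D_j}$, such that $W\setminus\overline{D_k}$ is connected. This set equals $F\cap W$, so it lies in a single component of $F$. That component meets $G$ because $x\in\partial G$, so it belongs to $G$. Hence $G\cap W=W\setminus\overline{D_k}$ and $D_j^*\cap W=D_k\cap W$. This local picture gives $\nu_{D_j^*}(x)=n_k(x)$ and equality of the gradient traces at once, with no density considerations.

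\textbf{What your route buys, and what needs fixing.} Your closure argument is valid and does not need the local structure of $G$: $\nabla(\varphi_j-\varphi_k)$ is continuous on $W\setminus D_k$, vanishes on $G$, and hence vanishes at $x\in\overline G$. Once $\nu_{D_j^*}(x)=\pm n_k(x)$ is known, this already finishes the proof, so your subcase analysis is unnecessary.

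That normal identity cannot come from ``the same'' inclusion as in the first piece, because $D_j^*$ is not contained in the exterior of $D_k$. The correct inclusion is the reverse one, $D_k\cap W\subset D_j^*$. It holds because $\overline G\cap D_k=\emptyset$ (as in the proof of Lemma \ref{lem:trapped_sets_nonempty}) and $W\cap\overline{D_j}=\emptyset$. The blow-up of $D_k$ is then contained in that of $D_j^*$, which forces $\nu_{D_j^*}(x)=n_k(x)$.

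The same inclusion has further consequences for your write-up:
\begin{itemize}
\item Your second subcase, with $D_k$ on the side opposite to $D_j^*$, never occurs.
\item Near $x$, the complement of $D_j^*$ is exactly $\overline G\cap W$, not a set ``filled by $D_k\cup G$''. Your sentence about $D_j^*\cap D_k$ being open reflects a misreading: $D_k$ near $x$ lies inside $D_j^*$.
\item The difficulty you anticipate at tangency or oscillation points of $\partial D_1$ and $\partial D_2$ does not arise in this piece. Such points lie on $\partial D_j$ and are covered by the first piece. At $x\notin\overline{D_j}$, only the smooth curve $\partial D_k$ is present, which is exactly what the paper's connectedness argument exploits.
\end{itemize}
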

	
	\begin{proof}
		By Lemma \ref{lem:phase_equality_in_G}, $\varphi_1=\varphi_2$ in $G$. Set $\mathcal T:=D_1^*$. Lemma \ref{lem:trapped_sets_finite_perimeter} reduces the proof to the parts of $\partial^*\mathcal T$ in $\partial D_1$ and in $\partial G\setminus\partial D_1$, up to an $\mathcal H^1$-null set.
		
		Let $n_1$ be the exterior normal to $D_1$. At a reduced boundary point $x\in\partial^*\mathcal T\cap\partial D_1$, the blow-up of $\mathcal T$ is the half-plane $\{y:y\cdot\nu_{\mathcal T}(x)<0\}$. The smooth exterior of $D_1$ has blow-up $\{y:y\cdot n_1(x)>0\}$. The inclusion $\mathcal T\subset\widetilde\Omega\setminus\overline{D_1}$ passes to these limits in local measure. Two half-planes through the origin, one contained in the other up to a null set, are equal. It follows that $\nu_{\mathcal T}(x)=-n_1(x)$ for almost every point under consideration. The exterior trace of $\varphi_1$ satisfies $\partial_{n_1}\varphi_1=0$, so its scalar product with $\nu_{\mathcal T}$ is zero. This argument includes the points of $\partial D_1\cap\partial D_2$; the intersection is not assumed to have zero length.
		
		Now take $x\in\partial^*\mathcal T\cap(\partial G\setminus\partial D_1)$ outside the exceptional null set. This point lies on $\partial D_2$ and is separated from $\partial\widetilde\Omega$. Since $\overline G\cap D_1=\emptyset$ and $x\notin\partial D_1$, one has $x\notin\overline{D_1}$. Choose a boundary neighborhood $W\Subset\widetilde\Omega$ disjoint from $\overline{D_1}$, small enough that $W\setminus\overline{D_2}$ is connected. This set equals $F\cap W$ and meets $G$, because $x\in\partial G$. As a connected subset of $F$, it lies in a single component of $F$, and that component belongs to $G$. We have $G\cap W=W\setminus\overline{D_2}$ and $\mathcal T\cap W=D_2\cap W$.
		
		On $W\setminus\overline{D_2}$, the functions $\varphi_1$ and $\varphi_2$ agree. The first is smooth across $\partial D_2$ in $W$, while the second is smooth up to that boundary from the exterior. Their gradient traces agree there. With $n_2$ the exterior normal to $D_2$, this gives $\nabla\varphi_1(x)\cdot n_2(x)=\partial_{n_2}\varphi_2(x)=0$. The local identity $\mathcal T\cap W=D_2\cap W$ gives $\nu_{\mathcal T}=n_2$. Both boundary parts have zero normal trace, proving the assertion for $D_1^*$. Interchanging the indices proves it for $D_2^*$.
	\end{proof}
	
	\begin{lemma}\label{lem:geometric_uniqueness_phase}
		If $\varphi_1=\varphi_2$ in $\omega$, then $D_1=D_2$.
	\end{lemma}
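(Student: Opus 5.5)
We argue by contradiction. Suppose $\varphi_1=\varphi_2$ in $\omega$ but $D_1\ne D_2$. By Lemma \ref{lem:trapped_sets_nonempty}, at least one of $D_1^*$ and $D_2^*$ has nonempty interior; say $D_j^*$ does, for some $j\in\{1,2\}$. We will show that $\varphi_j$ is constant on a nonempty open subset of $\widetilde\Omega_j$. Unique continuation for harmonic functions on the connected domain $\widetilde\Omega_j$ then makes $\varphi_j$ constant. This contradicts the nonconstancy of $\varphi_j$ established in Section \ref{sec:prel}: for $j=1$ it comes from the nondegenerate critical point $p_0$, and for $j=2$ from the nonconstant outer Dirichlet value $\varphi_1|_{\partial\widetilde\Omega}$.

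The key step is to apply Lemma \ref{lem:neumann_finite_perimeter} with $\mathcal T:=D_j^*$ and $\varphi$ equal to a $C^2$ extension of $\varphi_j$ across $\partial D_j$. We check the hypotheses in turn.
\begin{enumerate}[\rm(i)]
\item By Lemma \ref{lem:trapped_sets_finite_perimeter}, $D_j^*$ is a bounded set of finite perimeter. Its closure lies in $\overline{\Omega}\setminus D_j$ up to $\partial D_j$, and $\overline{D_j^*}\Subset\Omega$.
\item Since $\varphi_j\in C^\infty(\overline{\widetilde\Omega_j})$, it extends to a $C^2$ function on an open neighborhood $U$ of $\overline{D_j^*}$. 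The gradient trace on $\partial D_j$ is unchanged, which matches the convention fixed before Lemma \ref{lem:neumann_finite_perimeter}.
\item $D_j^*\subset\widetilde\Omega_j$ is open, and $\varphi_j$ is harmonic there, so $\Delta\varphi=0$ everywhere on $\mathcal T$.
\item Lemma \ref{lem:trapped_sets_zero_trace} gives $\nabla\varphi\cdot\nu_{\mathcal T}=0$ for $\mathcal H^1$-a.e. point of $\partial^*\mathcal T$.
\end{enumerate}
Lemma \ref{lem:neumann_finite_perimeter} then yields $\int_{D_j^*}|\nabla\varphi_j|^2\,dx=0$. Hence $\nabla\varphi_j=0$ on the open set $D_j^*$, because $\nabla\varphi_j$ is continuous there.

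It remains to pass from $D_j^*$ to a nonempty open subset of $\widetilde\Omega_j$. The interior of $D_j^*$ is nonempty and $D_j^*$ is open, so it contains a ball $B\subset\widetilde\Omega_j$. On $B$ the function $\varphi_j$ is constant, and the unique continuation argument of the first paragraph applies.

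We expect the only delicate point to be the hypothesis check in step (ii). One must confirm that the gradient trace used in Lemma \ref{lem:trapped_sets_zero_trace} on $\partial D_j$ is the one-sided trace from $\widetilde\Omega_j$. On the parts of $\partial^*D_j^*$ lying in $\partial G\setminus\partial D_j$, the set $D_j^*$ sits locally inside $D_{3-j}$, which lies in the exterior of $D_j$. There $\varphi_j$ is smooth in a full neighborhood, so no extension issue arises. Since these facts were established in Lemmas \ref{lem:trapped_sets_finite_perimeter} and \ref{lem:trapped_sets_zero_trace}, the proof reduces to assembling them as above.
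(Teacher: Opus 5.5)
Your proposal is correct and follows essentially the same route as the paper. You take $j$ with $D_j^*\neq\emptyset$ from Lemma~\ref{lem:trapped_sets_nonempty}, extend $\varphi_j$ as a $C^2$ function across $\partial D_j$, obtain zero Dirichlet energy on $D_j^*$ from Lemmas~\ref{lem:trapped_sets_zero_trace} and~\ref{lem:neumann_finite_perimeter}, and conclude by unique continuation on the connected domain $\widetilde\Omega_j$; the only cosmetic difference is that for $j=1$ you derive the final contradiction from the critical point $p_0$, whereas the paper uses the common nonconstant trace on $\partial\widetilde\Omega$ for both indices.
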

	
	\begin{proof}
		Suppose $D_1\ne D_2$. Lemma \ref{lem:trapped_sets_nonempty} gives an index $j$ for which the open set $D_j^*$ is nonempty. The function $\varphi_j$ is harmonic in $\widetilde\Omega_j$ and smooth up to $\partial D_j$ from the exterior. Extend it as a $C^2$ function across $\partial D_j$. Since $\overline{D_j^*}\Subset\Omega$, this gives a $C^2$ function on a neighborhood of $\overline{D_j^*}$, unchanged on $D_j^*$ and with the same gradient traces. The extension is not required to be harmonic inside $D_j$; it remains harmonic on $D_j^*$.
		
		Lemma \ref{lem:trapped_sets_zero_trace} gives the zero normal trace required by Lemma \ref{lem:neumann_finite_perimeter}. Applying that lemma gives $\int_{D_j^*}|\nabla\varphi_j|^2\,dx=0$. Continuity of the gradient and openness of $D_j^*$ imply that $\varphi_j$ is constant on a ball contained in $D_j^*$. The domain $\widetilde\Omega_j$ is connected by the collar construction. Unique continuation applied to $\varphi_j$ minus this constant makes it constant throughout $\widetilde\Omega_j$, contradicting its nonconstant trace on $\partial\widetilde\Omega$.
	\end{proof}

	\subsection{Choice of the CGO solution and the polynomial upper bound}
	
	For distinct obstacles, Lemma \ref{lem:geometric_uniqueness_phase} gives a nonzero difference of the comparison functions in $\omega$.
	
	\begin{lemma}\label{lem:phase_gap}
		If $D_1\neq D_2$, then there exist a sign $\sigma\in\{1,-1\}$, a ball $B\Subset\omega$, and a constant $c>0$ such that $d\psi_1\neq0$ on $\overline B$ and
		\begin{equation}\label{eq:varphi_1-varphi_2>c}
			\sigma(\varphi_1-\varphi_2)\ge 2c
			\quad\text{in } B.
		\end{equation}
	\end{lemma}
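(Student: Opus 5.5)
The argument is by contraposition from Lemma~\ref{lem:geometric_uniqueness_phase}, followed by a genericity step on the critical set of $\psi_1$. Assume $D_1\neq D_2$. Then Lemma~\ref{lem:geometric_uniqueness_phase} shows that $\varphi_1\neq\varphi_2$ somewhere in $\omega$. Hence the difference $w:=\varphi_1-\varphi_2$, which is harmonic and real-valued in $\omega$ (both functions are harmonic in $\widetilde\Omega_j\supset\omega$), does not vanish identically on $\omega$.

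Next I locate an open set where $w$ has a definite sign and $d\psi_1$ does not vanish. The zero set of a nonzero harmonic function on a connected open set has empty interior and Lebesgue measure zero. I apply this componentwise: $w$ is not identically zero on at least one connected component $\omega_0$ of $\omega$, so the set $\{x\in\omega_0: w(x)\neq0\}$ is open and nonempty.

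The critical points of $\Phi_1$ on $\overline{\widehat\Omega_1}$ form a finite set $Z$, as recorded in Section~\ref{sec:prel}. By Cauchy--Riemann, $d\psi_1=0$ exactly on $Z$. Therefore the set
\[
U:=\{x\in\omega_0: w(x)\neq0\}\setminus Z
\]
is open and nonempty, since removing finitely many points from a nonempty open planar set leaves it nonempty.

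Pick $x_0\in U$ and set $\sigma:=\operatorname{sign} w(x_0)\in\{1,-1\}$, so that $\sigma w(x_0)>0$. By continuity of $w$ and $d\psi_1$, choose a radius $r>0$ such that $\overline{B(x_0,r)}\subset U$ and
\[
\sigma w\ge \tfrac12\,\sigma w(x_0)\quad\text{on }\overline{B(x_0,r)}.
\]
Set $B:=B(x_0,r)\Subset\omega$ and $c:=\tfrac14\,\sigma w(x_0)>0$. Then $\sigma(\varphi_1-\varphi_2)\ge 2c$ in $B$, and $d\psi_1\neq0$ on $\overline B$, which is \eqref{eq:varphi_1-varphi_2>c}.

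The only step needing care is the first, the nontriviality of $w$ on $\omega$, and it is exactly the content of Lemma~\ref{lem:geometric_uniqueness_phase}. Everything after that is continuity together with the finiteness of the critical set of the Morse function $\Phi_1$, so I expect no real obstacle.
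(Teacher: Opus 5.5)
Your proposal is correct and follows essentially the same route as the paper. You get nontriviality of $\varphi_1-\varphi_2$ in $\omega$ from Lemma~\ref{lem:geometric_uniqueness_phase}, remove the finitely many zeros of $d\psi_1$ (the critical points of $\Phi_1$, via Cauchy--Riemann), fix the sign at a remaining point, and shrink a ball by continuity; the passage to a component $\omega_0$ and the remark that the zero set of $w$ has measure zero are unnecessary but harmless.
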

	
	\begin{proof}
		Lemma \ref{lem:geometric_uniqueness_phase} implies that $\varphi_1-\varphi_2$ does not vanish identically in $\omega$. Choose its sign $\sigma$ so that $U:=\{x\in\omega:\sigma(\varphi_1-\varphi_2)(x)>0\}$ is nonempty. By the Cauchy--Riemann equations, the zeros of $d\psi_1$ are the critical points of $\Phi_1$. They are isolated, so the open set $U$ contains a point $x_1$ with $d\psi_1(x_1)\ne0$. Continuity gives a ball $B$ centered at $x_1$, with $\overline B\subset U$, on which $|d\psi_1|$ has a positive lower bound. Reducing the radius if necessary gives a constant $c>0$ for which \eqref{eq:varphi_1-varphi_2>c} holds.
	\end{proof}
	
	Fix $\sigma$, $B$, and $c$ from Lemma \ref{lem:phase_gap}. Choose the amplitude $a_\sigma$ in Lemma \ref{lem:CGO_with_boundary_correction}, independent of $h$ and not identically zero on $B$, and use the resulting family
	\begin{equation}\label{eq:signed_CGO_section4}
		u_{1,\sigma,h}=e^{\sigma\Phi_1/h}(a_\sigma+r_{\sigma,h})-e^{\sigma\overline{\Phi_1}/h}(\overline{a_\sigma}+\overline{r_{\sigma,h}})+e^{\sigma\varphi_1/h}\widetilde r_{\sigma,h}.
	\end{equation}
	Set $f_h:=u_{1,\sigma,h}|_{\partial\Omega}$. The configurations, phases, sign, and amplitude are fixed from now on. Constants may depend on these choices but are independent of $h$.
	
	\begin{lemma}\label{lem:collar_gluing}
		Assume that $\Lambda_{D_1,q_1}f_h=\Lambda_{D_2,q_2}f_h$, and let $u_2$ solve \eqref{eq:u2} with $f=f_h$. Extension by $u_2:=u_{1,\sigma,h}$ in $\omega$ gives an $H^2(\widetilde\Omega_2)$ weak solution of $(-\Delta+q_2)u_2=0$.
	\end{lemma}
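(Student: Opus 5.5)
The plan has two parts. First obtain a weak $H^1(\widetilde\Omega_2)$ solution from Lemma~\ref{lem:gluing_observation}. Then upgrade it to $H^2$ by elliptic regularity near the interface $\partial\Omega$.

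\textbf{Step 1 (weak solution).} Lemma~\ref{lem:CGO_with_boundary_correction} says that, for each fixed $h$, the function $u_1:=u_{1,\sigma,h}$ belongs to $H^2(\widetilde\Omega_1)\subset H^1(\widetilde\Omega_1)$. It solves $(-\Delta+q_1)u_1=0$ and vanishes on $\partial D_1$. By the construction in Section~\ref{sec:prel}, $q_1=q_2$ in $\omega$. The hypothesis $\Lambda_{D_1,q_1}f_h=\Lambda_{D_2,q_2}f_h$ is exactly the hypothesis of Lemma~\ref{lem:gluing_observation} with $f=f_h$. That lemma shows that the joined function $u_2$ lies in $H^1(\widetilde\Omega_2)$ and is a weak solution of $(-\Delta+q_2)u_2=0$ there, with $q_2=q_2^{\mathrm{ext}}\in L^\infty$. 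It also has zero trace on $\partial D_2$, inherited from \eqref{eq:u2}.

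\textbf{Step 2 (regularity).} The equation gives $\Delta u_2=q_2u_2\in L^2(\widetilde\Omega_2)$ in the sense of distributions on $\widetilde\Omega_2$. The point is that the weak formulation across $\partial\Omega$ already encodes the matching of Neumann traces, so there is no singular layer on $\partial\Omega$. Interior elliptic regularity therefore gives $u_2\in H^2_{\mathrm{loc}}(\widetilde\Omega_2)$. A jump of $q_2$ across $\partial\Omega$ is harmless, because only $L^\infty$ is needed for the right-hand side.

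The two boundary pieces are then handled separately.
\begin{itemize}
\item Near $\partial D_2$, the function $u_2$ is the original solution of \eqref{eq:u2}, which lies in $H^1$ with zero Dirichlet data on the smooth curve $\partial D_2$ and has $\Delta u_2\in L^2$. Localize with a cutoff $\chi$ equal to one near $\partial D_2$ and supported away from $\partial\Omega$. Then $\chi u_2\in H^1_0$ and $\Delta(\chi u_2)\in L^2$, because the commutator involves only $\nabla u_2\in L^2$. Standard $H^2$ regularity for the Dirichlet problem gives $H^2$ up to $\partial D_2$.
\item Near $\partial\widetilde\Omega$, the function $u_2$ coincides with $u_{1,\sigma,h}$ on the collar $\omega$. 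This is $H^2$ up to $\partial\widetilde\Omega$ by Lemma~\ref{lem:CGO_with_boundary_correction}, since $\partial\widetilde\Omega$ lies strictly inside $\widehat\Omega_1$.
\end{itemize}
A finite partition of unity combines the interior, $\partial D_2$ and $\partial\widetilde\Omega$ pieces into $u_2\in H^2(\widetilde\Omega_2)$.

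The main point requiring care is the interior regularity across $\partial\Omega$. One must make sure the distributional identity $-\Delta u_2+q_2u_2=0$ holds on all of $\widetilde\Omega_2$, including test functions whose support crosses $\partial\Omega$. This is exactly what Lemma~\ref{lem:gluing_observation} provides, and it uses both the equality of Dirichlet traces (which makes the joined function $H^1$) and the equality of Neumann traces (which removes the boundary pairing).

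Once this identity is in hand, $H^2_{\mathrm{loc}}$ follows from the standard fact that $v\in H^1_{\mathrm{loc}}$ with $\Delta v\in L^2_{\mathrm{loc}}$ implies $v\in H^2_{\mathrm{loc}}$. This can be seen, for instance, by writing $\chi v=\mathcal N*(\Delta(\chi v))$ plus a harmonic part, where $\mathcal N$ is the Newtonian potential, and using Calderón--Zygmund bounds.
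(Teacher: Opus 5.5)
Your proof is correct and is essentially the paper's: the weak solution across $\partial\Omega$ comes from Lemma~\ref{lem:gluing_observation}, and $H^2$ regularity follows from $\Delta u_2=q_2u_2\in L^2(\widetilde\Omega_2)$ with $q_2\in L^\infty$. The only difference is bookkeeping. The paper subtracts an $H^2$ lifting $E$ of the outer trace $u_{1,\sigma,h}|_{\partial\widetilde\Omega}\in H^{3/2}(\partial\widetilde\Omega)$, chosen to vanish near $\partial D_2$, and applies global Dirichlet regularity to $u_2-E\in H_0^1(\widetilde\Omega_2)$, whereas you combine the interior, $\partial D_2$, and $\partial\widetilde\Omega$ pieces by a partition of unity.
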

	
	\begin{proof}
		Lemma \ref{lem:CGO_with_boundary_correction} gives $u_{1,\sigma,h}\in H^2(\widetilde\Omega_1)$ with zero trace on $\partial D_1$. Its trace on $\partial\Omega$ is $f_h$, and the extended potentials agree in $\omega$. Hence, Lemma \ref{lem:gluing_observation} gives an $H^1(\widetilde\Omega_2)$ weak solution.
		
		The joined solution has zero Dirichlet trace on $\partial D_2$ and the trace of $u_{1,\sigma,h}$ on $\partial\widetilde\Omega$. The latter belongs to $H^{3/2}(\partial\widetilde\Omega)$ because this boundary lies in the interior of $\widehat\Omega_1$, where $u_{1,\sigma,h}$ is $H^2$. Choose $E\in H^2(\widetilde\Omega_2)$ with this outer trace and vanishing near $\partial D_2$. Then $w:=u_2-E\in H_0^1(\widetilde\Omega_2)$ satisfies
		\[
		-\Delta w=-q_2u_2+\Delta E\in L^2(\widetilde\Omega_2).
		\]
		Dirichlet regularity for the Laplacian gives $w\in H^2(\widetilde\Omega_2)$ and hence $u_2\in H^2(\widetilde\Omega_2)$. This step uses only the boundedness of the extended $q_2$ and does not require a spectral assumption on the enlarged Schr\"odinger operator.
	\end{proof}

	Assuming equality of the Neumann data for $f_h$, we estimate the weighted solution for the second configuration. We first bound the outer boundary values of the weighted CGO solution.
	
	\begin{lemma}\label{lem:outer_boundary_polynomial_control}
		On the interior side of a collar of $\partial\widetilde\Omega$, set $v_1:=e^{-\sigma\varphi_2/h}u_{1,\sigma,h}$. There exist $C>0$ and $h_0>0$, independent of $h$, such that
		\begin{equation}\label{eq:estimate_boundary_v_1}
			\mathcal B_{\partial\widetilde\Omega}(v_1)\le Ch^{-8},\quad 0<h<h_0.
		\end{equation}
	\end{lemma}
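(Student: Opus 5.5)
The plan is to factor out the exact exponential weight of the CGO solution, using the phase $\varphi_1$ rather than $\varphi_2$. Set $w_h:=e^{-\sigma\varphi_1/h}u_{1,\sigma,h}$ on $\widehat\Omega_1$. Then
\[
v_1=e^{\sigma(\varphi_1-\varphi_2)/h}\,w_h
\]
on the interior side of a collar of $\partial\widetilde\Omega$. On $\partial\widetilde\Omega$ we have $\varphi_1=\varphi_2$, by the boundary condition defining $\varphi_2$. So the prefactor equals one there, all its tangential derivatives vanish there, and every derivative of it at a point of $\partial\widetilde\Omega$ is a polynomial in $h^{-1}$. Its degree is at most the number of normal differentiations, and its coefficients are smooth functions of $\varphi_1-\varphi_2$ and its derivatives. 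Both $\varphi_1$ and $\varphi_2$ are smooth up to $\partial\widetilde\Omega$ from inside, the first by Section~\ref{sec:prel}. The quantity $\mathcal B_{\partial\widetilde\Omega}(v_1)$ involves at most three derivatives of $v_1$ at the boundary: $v_1$ and two tangential derivatives, and $\partial_\nu v_1$ and two tangential derivatives. It therefore suffices to prove
\[
\sup_{K'}|\partial^k w_h|\le Ch^{-k-1},\quad k\le 3,
\]
on a fixed compact neighborhood $K'\subset\widehat\Omega_1$ of $\partial\widetilde\Omega$. The product rule then gives $|\partial^k v_1|\le C\sum_j h^{-j}h^{-(k-j)-1}\le Ch^{-4}$ on $\partial\widetilde\Omega$, and squaring gives $h^{-8}$.

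\textbf{Step 1: a uniform $L^2$ bound for $w_h$.} Fix a compact $K\subset\widehat\Omega_1$ with $K'\Subset K$. This is possible because $\widetilde\Omega\Subset\widehat\Omega$ and $\partial\widetilde\Omega$ is far from $\partial D_1$. By \eqref{eq:CGO_representation_u1},
\[
w_h=e^{\mathsf i\sigma\psi_1/h}(a_\sigma+r_{\sigma,h})-e^{-\mathsf i\sigma\psi_1/h}(\overline{a_\sigma}+\overline{r_{\sigma,h}})+\widetilde r_{\sigma,h}.
\]
The oscillating factors have modulus one, $a_\sigma$ is fixed, and the remainder bounds of Lemma~\ref{lem:CGO_with_boundary_correction} then give $\|w_h\|_{L^2(K)}\le C$ uniformly in $h$. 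This uses only the $L^2$ remainder estimates and no pointwise information on $r_{\sigma,h}$ or $\widetilde r_{\sigma,h}$.

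\textbf{Step 2: scaled Schauder estimates (the main step).} The function $w_h$ solves the conjugated equation
\[
-\Delta w_h-2\sigma h^{-1}\nabla\varphi_1\cdot\nabla w_h-h^{-2}|\nabla\varphi_1|^2w_h+q_1w_h=0,
\]
using $\Delta\varphi_1=0$. Standard regularity theory does not give $h$-uniform bounds here because of the large first- and zeroth-order coefficients, and $H^k$ semiclassical estimates stop short of $C^3$ since $q_1$ is only $C^{1,\alpha}$. So I will rescale at scale $h$. For $x_0\in K'$ and $h$ small, put $\tilde w(y):=w_h(x_0+hy)$ on $B(0,2)$. Then
\[
-\Delta_y\tilde w-2\sigma\nabla\varphi_1(x_0+hy)\cdot\nabla_y\tilde w-\big(|\nabla\varphi_1|^2-h^2q_1\big)(x_0+hy)\,\tilde w=0.
\]
The coefficients of this equation have $C^{1,\alpha}(B(0,2))$ norms bounded uniformly in $h\le1$ and $x_0$, since $\varphi_1$ is smooth and $q_1\in C^{1,\alpha}$. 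Interior Schauder theory, followed by a bootstrap from $L^2$ to $C^{3,\alpha}$, gives $\|\tilde w\|_{C^{3}(B(0,1))}\le C\|\tilde w\|_{L^2(B(0,2))}$. Scaling back, $\|\tilde w\|_{L^2(B(0,2))}=h^{-1}\|w_h\|_{L^2(B(x_0,2h))}\le Ch^{-1}$ by Step~1, and $\partial_x^k=h^{-k}\partial_y^k$. Hence $|\partial^kw_h(x_0)|\le Ch^{-k-1}$ for $k\le3$, uniformly in $x_0\in K'$.

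\textbf{Step 3: conclusion.} Combine Step~2 with the structure of the prefactor $e^{\sigma(\varphi_1-\varphi_2)/h}$ at $\partial\widetilde\Omega$ described in the first paragraph. The tangential $W^{2,\infty}$ norms of $v_1|_{\partial\widetilde\Omega}$ and of $\partial_\nu v_1|_{\partial\widetilde\Omega}$, computed in fixed smooth charts, are then $O(h^{-4})$. This yields \eqref{eq:estimate_boundary_v_1}. The constants depend only on the fixed phases, amplitude, potential extension and the choice of $K$, so $h_0$ is independent of $h$.

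I expect Step~2 to be the only delicate point. The difficulty is obtaining $C^3$ control with only polynomial loss from merely $L^2$ remainder bounds and a $C^{1,\alpha}$ potential. The scaling argument handles this because the rescaled operator has uniformly bounded $C^{1,\alpha}$ coefficients, which is exactly the regularity Schauder theory needs to reach $C^{3,\alpha}$.
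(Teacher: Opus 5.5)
Your proof is correct and uses the same mechanism as the paper: an $h$-uniform $L^2$ bound on balls of radius $O(h)$ at $\partial\widetilde\Omega$, then an interior Schauder bootstrap at scale $h$ for an operator with uniformly $C^{1,\alpha}$ coefficients, which gives $|\partial^k(\cdot)|\le Ch^{-k-1}$ for $k\le 3$ and hence $h^{-8}$ after squaring. The only difference is that you conjugate by $\varphi_1$ on a fixed neighborhood and pass to $v_1$ through the prefactor $e^{\sigma(\varphi_1-\varphi_2)/h}$, which equals one on $\partial\widetilde\Omega$ and has one-sided $j$-th derivatives of size $O(h^{-j})$ there; the paper instead conjugates directly by a smooth extension of $\varphi_2$ across $\partial\widetilde\Omega$, keeps the resulting non-harmonic term $\Delta\varphi_2$, and gets the $L^2$ bound only on the layer of width $c_0h$ where $|\varphi_1-\varphi_2|/h$ is bounded, so your version avoids that extension at the cost of one product-rule step.
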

	
	\begin{proof}
		Choose a fixed two-sided neighborhood $U_0\Subset\widehat\Omega_1$ of $\partial\widetilde\Omega$, small enough that $U_0\cap\widetilde\Omega\subset\omega$. Extend $\varphi_2$ smoothly to $U_0$, independently of $h$, keeping its original values on $U_0\cap\widetilde\Omega$, and define $v_1:=e^{-\sigma\varphi_2/h}u_{1,\sigma,h}$ there. Set $\theta:=\sigma(\varphi_1-\varphi_2)$. Since $\varphi_1=\varphi_2$ on $\partial\widetilde\Omega$, we have
		\[
		|\theta(x)|\le C\operatorname{dist}(x,\partial\widetilde\Omega),
		\quad x\in U_0,
		\]
		after reducing $U_0$ if necessary. For a fixed small $c_0>0$, set
		\[
		U_h:=\{x\in U_0:\operatorname{dist}(x,\partial\widetilde\Omega)<c_0h\},
		\]
		then $|\theta|/h$ is uniformly bounded on $U_h$. The CGO representation \eqref{eq:signed_CGO_section4} and the remainder estimates give
		\[
		\|v_1\|_{L^2(U_h)}\le C\bigl(|U_h|^{1/2}+\|r_{\sigma,h}\|_{L^2(\widehat\Omega_1)}
		+\|\widetilde r_{\sigma,h}\|_{L^2(\widehat\Omega_1)}\bigr)\le C.
		\]
		
		In $U_0$, the function $v_1$ satisfies
		\begin{equation*}
			-h^2\Delta v_1-2\sigma h\nabla\varphi_2\cdot\nabla v_1+(h^2q_1-\sigma h\Delta\varphi_2-|\nabla\varphi_2|^2)v_1=0.
		\end{equation*}
		The term $\Delta\varphi_2$ is included because the extension outside $\widetilde\Omega$ need not be harmonic. Choose $0<\gamma<c_0/4$. For $x_0\in\partial\widetilde\Omega$ and small $h$, the ball $B(x_0,2\gamma h)$ is contained in $U_h$. Define $V_{h,x_0}(y):=v_1(x_0+hy)$. In $B(0,2\gamma)$ it solves
		\begin{equation*}
			-\Delta_yV_{h,x_0}-2\sigma\nabla\varphi_2(x_0+hy)\cdot\nabla_yV_{h,x_0}+(h^2q_1-\sigma h\Delta\varphi_2-|\nabla\varphi_2|^2)(x_0+hy)V_{h,x_0}=0.
		\end{equation*}
		The first- and zeroth-order coefficients have uniformly bounded $C^{1,\alpha}$ norms, since $q_1\in C^{1,\alpha}(\overline{\widehat\Omega_1})$ and the extension of $\varphi_2$ is fixed and smooth. The two-dimensional change of variables gives
		\[
		\|V_{h,x_0}\|_{L^2(B(0,2\gamma))}
		=h^{-1}\|v_1\|_{L^2(B(x_0,2\gamma h))}
		\le Ch^{-1}.
		\]
		Choose $p>2/(1-\alpha)$ and put $\beta:=1-2/p>\alpha$. Interior elliptic regularity on nested balls gives an $H^2$ bound by $C\|V_{h,x_0}\|_{L^2(B(0,2\gamma))}$. In dimension two, this also bounds the $W^{1,p}$ norm. The interior $L^p$ estimate then gives a $W^{2,p}$ bound, and Sobolev embedding gives a $C^{1,\beta}$ bound. Writing the equation as $\Delta_yV_{h,x_0}=A_{h,x_0}\cdot\nabla_yV_{h,x_0}+B_{h,x_0}V_{h,x_0}$, the right-hand side has $C^{0,\alpha}$ norm bounded by the same quantity. A Schauder estimate gives a $C^{2,\alpha}$ bound. Since the coefficients have uniform $C^{1,\alpha}$ bounds, the right-hand side now has a $C^{1,\alpha}$ bound, and a second Schauder estimate yields
		\[
		\|V_{h,x_0}\|_{C^{3,\alpha}(\overline{B(0,\gamma)})}
		\le C\|V_{h,x_0}\|_{L^2(B(0,2\gamma))}
		\le Ch^{-1}.
		\]
		All constants are independent of $x_0$ and $h$. Returning to the original variables yields
		\[
		|\partial_x^\mu v_1(x_0)|\le Ch^{-|\mu|-1},
		\quad |\mu|\le3.
		\]
		
		The coefficients of the tangential derivatives and of the normal field on $\partial\widetilde\Omega$ are fixed and smooth. The preceding bounds give $\|v_1\|_{W^{2,\infty}(\partial\widetilde\Omega)}\le Ch^{-3}$ and $\|\partial_\nu v_1\|_{W^{2,\infty}(\partial\widetilde\Omega)}\le Ch^{-4}$. Squaring and using \eqref{eq:def_B_partial_Omega} proves \eqref{eq:estimate_boundary_v_1}.
	\end{proof}
	
	\begin{lemma}\label{lem:polynomial_upper_bound}
		There exist $C>0$ and $h_0>0$, depending on the fixed configurations, phases, and amplitude, such that for every $0<h<h_0$ the following holds. If $\Lambda_{D_1,q_1}f_h=\Lambda_{D_2,q_2}f_h$, the solution from Lemma \ref{lem:collar_gluing} satisfies
		\begin{equation}\label{eq:poly_upper_bound_u2}
			\|e^{-\sigma\varphi_2/h}u_2\|_{L^2(\widetilde\Omega_2)}\le Ch^{-6}.
		\end{equation}
	\end{lemma}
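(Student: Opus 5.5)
The plan is to apply Proposition~\ref{prop:Carleman_boundary_terms} on $\mathcal O=\widetilde\Omega_2$ with weight $\varphi:=\sigma\varphi_2$, potential $V:=q_2$ (the extended, bounded potential) and function $v:=e^{-\sigma\varphi_2/h}u_2$. The weight qualifies: $\sigma\varphi_2$ is a real-valued, nonconstant harmonic function, smooth up to the boundary, with $\partial_\nu(\sigma\varphi_2)=0$ on $\partial D_2$ (Remark~\ref{rem:non_morse_Carleman}). Moreover $\widetilde\Omega_2$ is connected and $D_2\Subset\Omega\Subset\widetilde\Omega$, so the geometric hypotheses hold.

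The first step is to check that $v$ is an admissible function.
\begin{itemize}
\item By Lemma~\ref{lem:collar_gluing}, $u_2\in H^2(\widetilde\Omega_2)$. Multiplication by the smooth factor $e^{-\sigma\varphi_2/h}$, for fixed $h$, preserves $H^2$.
\item $v=0$ on $\partial D_2$, since $u_2=0$ there.
\item Near $\partial\widetilde\Omega$ we have $u_2=u_{1,\sigma,h}$ in $\omega$, so $v$ coincides there with the function $v_1$ of Lemma~\ref{lem:outer_boundary_polynomial_control}. Its outer traces $v|_{\partial\widetilde\Omega}$ and $\partial_\nu v|_{\partial\widetilde\Omega}$ are therefore the traces of $v_1$. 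These are $C^2$ on $\partial\widetilde\Omega$ by the interior Schauder regularity used in that lemma, since $\partial\widetilde\Omega$ lies inside $\widehat\Omega_1$. In particular they belong to $W^{2,\infty}(\partial\widetilde\Omega)$.
\end{itemize}

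Next, the conjugated equation. We have
\[
e^{-\sigma\varphi_2/h}(-\Delta+q_2)e^{\sigma\varphi_2/h}v=e^{-\sigma\varphi_2/h}(-\Delta+q_2)u_2=0
\]
almost everywhere in $\widetilde\Omega_2$, because $u_2$ is an $H^2$ weak solution and hence a strong solution. So the first term on the right of \eqref{eq:Carleman_boundary_terms} vanishes. Keeping only the term $h^{-1}\|v\|^2_{L^2}$ on the left gives
\[
h^{-1}\|v\|_{L^2(\widetilde\Omega_2)}^2\le Ch^{-5}\mathcal B_{\partial\widetilde\Omega}(v)=Ch^{-5}\mathcal B_{\partial\widetilde\Omega}(v_1)\le Ch^{-13}
\]
by \eqref{eq:estimate_boundary_v_1}. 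Hence $\|v\|_{L^2}^2\le Ch^{-12}$, that is, $\|e^{-\sigma\varphi_2/h}u_2\|_{L^2}\le Ch^{-6}$. This is \eqref{eq:poly_upper_bound_u2}, for $h$ below the minimum of the two thresholds $h_0$.

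The only delicate point is the identification of the boundary traces of $v$ with those of $v_1$. In Lemma~\ref{lem:outer_boundary_polynomial_control}, $\varphi_2$ was extended across $\partial\widetilde\Omega$, but the traces are taken from the interior side, where the extension agrees with the original $\varphi_2$ and where $u_2=u_{1,\sigma,h}$. So the traces agree, and the constants are independent of $h$ because the configurations, phases and amplitude are fixed.
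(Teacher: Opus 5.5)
Your proposal is correct and follows the paper's proof essentially step by step. You apply Proposition~\ref{prop:Carleman_boundary_terms} on $\widetilde\Omega_2$ with weight $\sigma\varphi_2$ and $V=q_2$ to $v=e^{-\sigma\varphi_2/h}u_2$, use that the conjugated equation term vanishes and that $\mathcal B_{\partial\widetilde\Omega}(v)=\mathcal B_{\partial\widetilde\Omega}(v_1)\le Ch^{-8}$, and conclude $h^{-1}\|v\|_{L^2}^2\le Ch^{-13}$, as the paper does; your added justifications (the equation holding almost everywhere for an $H^2$ weak solution, and the identification of the traces from the interior side) are points the paper uses implicitly.
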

	
	\begin{proof}
		Set $v:=e^{-\sigma\varphi_2/h}u_2$. The glued equation gives
		\[
		\begin{cases}
			e^{-\sigma\varphi_2/h}(-\Delta+q_2)e^{\sigma\varphi_2/h}v=0
			&\text{in }\widetilde\Omega_2,\\
			v=0&\text{on }\partial D_2.
		\end{cases}
		\]
		Lemma \ref{lem:collar_gluing} gives $v\in H^2(\widetilde\Omega_2)$. On an interior collar of $\partial\widetilde\Omega$, it agrees with $v_1$, whose Dirichlet and normal traces belong to $W^{2,\infty}(\partial\widetilde\Omega)$ by Lemma \ref{lem:outer_boundary_polynomial_control}. Hence, Proposition \ref{prop:Carleman_boundary_terms} applies with weight $\sigma\varphi_2$ and $V=q_2$.
		
		The conjugated equation term is zero, and $\mathcal B_{\partial\widetilde\Omega}(v)=\mathcal B_{\partial\widetilde\Omega}(v_1)$. Proposition \ref{prop:Carleman_boundary_terms} with $N=5$ and Lemma \ref{lem:outer_boundary_polynomial_control} give
		\begin{equation*}
			h^{-1}\|v\|_{L^2(\widetilde\Omega_2)}^2\le Ch^{-5}\mathcal B_{\partial\widetilde\Omega}(v)=Ch^{-5}\mathcal B_{\partial\widetilde\Omega}(v_1)\le Ch^{-13}.
		\end{equation*}
		The last inequality gives $\|v\|_{L^2(\widetilde\Omega_2)}^2\le Ch^{-12}$, which proves \eqref{eq:poly_upper_bound_u2}.
	\end{proof}

	\subsection{The oscillatory lower bound and the contradiction}

	\begin{lemma}\label{lem:oscillatory_factor_L2}
		Let $B\Subset\omega$ be a ball such that $d\psi_1\neq0$ on $\overline B$, and let $a$ be holomorphic in a neighborhood of $\overline B$ and not identically zero on $B$. Then, for each $\sigma\in\{1,-1\}$, there exist constants $c_B>0$ and $h_B>0$ such that
		\begin{equation*}
			\|e^{\mathsf{i}\sigma\psi_1/h}a-e^{-\mathsf{i}\sigma\psi_1/h}\overline a\|_{L^2(B)}\ge c_B
		\end{equation*}
		for all $0<h<h_B$.
	\end{lemma}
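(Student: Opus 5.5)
We expand the square and reduce the claim to a Riemann--Lebesgue type decay of an oscillatory integral. For $x\in B$ one has
\[
\bigl|e^{\mathsf i\sigma\psi_1/h}a-e^{-\mathsf i\sigma\psi_1/h}\overline a\bigr|^2
=2|a|^2-2\operatorname{Re}\bigl(e^{2\mathsf i\sigma\psi_1/h}a^2\bigr),
\]
because the cross terms are $e^{2\mathsf i\sigma\psi_1/h}a^2$ and its conjugate. Integrating over $B$ gives
\[
\|e^{\mathsf i\sigma\psi_1/h}a-e^{-\mathsf i\sigma\psi_1/h}\overline a\|_{L^2(B)}^2
=2\|a\|_{L^2(B)}^2-2\operatorname{Re}\int_B e^{2\mathsf i\sigma\psi_1/h}a^2\,dx .
\]
Since $a$ is holomorphic near $\overline B$ and not identically zero on $B$, the first term is a fixed positive number $2\|a\|_{L^2(B)}^2$, independent of $h$. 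It therefore suffices to show that the oscillatory integral $I_h:=\int_B e^{2\mathsf i\sigma\psi_1/h}a^2\,dx$ tends to zero as $h\to0$. We may then take $c_B^2:=\|a\|_{L^2(B)}^2$ and choose $h_B$ with $|I_h|\le\frac12\|a\|_{L^2(B)}^2$ for $h<h_B$.

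For the decay of $I_h$ we use the nonstationary phase. The phase $\psi_1$ is smooth near $\overline B$ with $d\psi_1\ne0$ there, and $a^2$ is smooth up to $\partial B$. The difficulty is that $a^2$ does not vanish on $\partial B$, so a single integration by parts produces a boundary term. We handle this by a cutoff. Given $\varepsilon>0$, we choose $\chi\in C_c^\infty(B)$ with $0\le\chi\le1$ and $\|(1-\chi)a^2\|_{L^1(B)}<\varepsilon$, which is possible because $a^2$ is bounded on $B$. The contribution of $(1-\chi)a^2$ is then at most $\varepsilon$, uniformly in $h$.

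For the compactly supported part we use the vector field $X:=\nabla\psi_1/|\nabla\psi_1|^2$, which is smooth near $\operatorname{supp}\chi$. It satisfies
\[
X\cdot\nabla e^{2\mathsf i\sigma\psi_1/h}=\frac{2\mathsf i\sigma}{h}e^{2\mathsf i\sigma\psi_1/h}.
\]
We substitute this identity and integrate by parts once. There are no boundary terms, so
\[
\int_B e^{2\mathsf i\sigma\psi_1/h}\chi a^2\,dx=-\frac{h}{2\mathsf i\sigma}\int_B e^{2\mathsf i\sigma\psi_1/h}\operatorname{div}\bigl(\chi a^2X\bigr)\,dx,
\]
which is $\mathcal O_\varepsilon(h)$. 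Hence $\limsup_{h\to0}|I_h|\le\varepsilon$, and since $\varepsilon$ is arbitrary, $I_h\to0$.

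The only delicate point is the boundary term, and the cutoff removes it. Alternatively, one could integrate by parts directly on $B$ and bound the boundary integral by $Ch$, since $\partial B$ is smooth and $X$ is bounded there; this would give the stronger rate $I_h=\mathcal O(h)$. Either route gives the constants $c_B$ and $h_B$. Because $|\sigma|=1$ and $\sigma$ enters only through the sign of the phase, the argument is the same for both values of $\sigma$.
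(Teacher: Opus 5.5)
Your proof is correct, and it agrees with the paper on the reduction. Both arguments expand the square to $2\|a\|_{L^2(B)}^2-2\operatorname{Re}\int_B e^{2\mathsf i\sigma\psi_1/h}a^2\,dx$, show that the oscillatory term tends to zero, and arrive at the same constant $c_B=\|a\|_{L^2(B)}$.

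The two routes differ in how the decay is proved:
\begin{itemize}
\item The paper uses the coarea formula, with $|\nabla\psi_1|$ bounded below on $\overline B$. This rewrites the integral as $\int_{\mathbb R}e^{2\mathsf i\sigma t/h}A(t)\,dt$ with $A\in L^1(\mathbb R)$, and the one-dimensional Riemann--Lebesgue lemma finishes the argument.
\item You integrate by parts along $X=\nabla\psi_1/|\nabla\psi_1|^2$. You handle $\partial B$ either by a cutoff with an $\varepsilon$-approximation, or by keeping the boundary term.
\end{itemize}

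Each route has its own advantages:
\begin{itemize}
\item The paper's route never differentiates the amplitude and needs no special treatment of $\partial B$, since the indicator of $B$ is absorbed into $A$. It therefore works for any $a\in L^2(B)$, but it gives only qualitative convergence.
\item Your route uses the smoothness of $a$ and $\psi_1$ up to $\partial B$. That smoothness is available here, because $a$ is holomorphic near $\overline B$ and $\Phi_1$ is smooth on $\overline{\widehat\Omega_1}$. In its second version your route gives the explicit rate $I_h=\mathcal O(h)$, and hence an explicit $h_B$.
\end{itemize}

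Neither refinement matters downstream: Lemma \ref{lem:phase_gap_exclusion} only needs $\|b_h\|_{L^2(B)}$ to stay bounded below, to play against the exponential factor $e^{2c/h}$.
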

	
	\begin{proof}
		Put $b_h:=e^{\mathsf i\sigma\psi_1/h}a-e^{-\mathsf i\sigma\psi_1/h}\overline a$. Expanding the squared modulus gives
		\begin{equation*}
			\|b_h\|_{L^2(B)}^2=2\int_B|a|^2\,dx-2\operatorname{Re}\int_Be^{2\mathsf i\sigma\psi_1/h}a^2\,dx.
		\end{equation*}
		Since $|\nabla\psi_1|$ has a positive lower bound on $\overline B$, the coarea formula gives
		\begin{equation*}
			\int_Be^{2\mathsf i\sigma\psi_1/h}a^2\,dx=\int_{\mathbb R}e^{2\mathsf i\sigma t/h}A(t)\,dt,\quad A(t):=\int_{B\cap\{\psi_1=t\}}\frac{a^2}{|\nabla\psi_1|}\,d\mathcal H^1.
		\end{equation*}
		The function $A$ is defined for almost every $t$ and is set to zero outside $\psi_1(B)$. A second application of coarea gives $\|A\|_{L^1(\mathbb R)}\le\int_B|a|^2\,dx$. The Riemann--Lebesgue lemma makes the oscillatory integral tend to zero as $h\downarrow0$, for either sign $\sigma$. In particular, $\|b_h\|_{L^2(B)}^2\to2\int_B|a|^2\,dx>0$. One may take $c_B=(\int_B|a|^2\,dx)^{1/2}$ after decreasing $h_B$.
	\end{proof}
	
	\begin{lemma}\label{lem:phase_gap_exclusion}
		There exists $h_*>0$ such that, for every $0<h<h_*$, one cannot have
		\begin{equation*}
			\Lambda_{D_1,q_1}f_h=\Lambda_{D_2,q_2}f_h.
		\end{equation*}
	\end{lemma}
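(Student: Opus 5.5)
We argue by contradiction. Suppose that for some small $h$ the equality $\Lambda_{D_1,q_1}f_h=\Lambda_{D_2,q_2}f_h$ holds. Let $u_2$ be the joined solution of Lemma \ref{lem:collar_gluing}; in the collar $\omega$ it equals $u_{1,\sigma,h}$. Lemma \ref{lem:polynomial_upper_bound} then gives the upper bound $\|e^{-\sigma\varphi_2/h}u_2\|_{L^2(\widetilde\Omega_2)}\le Ch^{-6}$ with $C$ independent of $h$. Since $B\Subset\omega\subset\widetilde\Omega_2$, the same bound holds for the $L^2(B)$ norm of $e^{-\sigma\varphi_2/h}u_{1,\sigma,h}$. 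The rest of the proof produces an exponentially growing lower bound for this quantity.

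On $B$ we factor $e^{-\sigma\varphi_2/h}u_{1,\sigma,h}=e^{\theta/h}\,e^{-\sigma\varphi_1/h}u_{1,\sigma,h}$, where $\theta:=\sigma(\varphi_1-\varphi_2)\ge 2c$ on $B$ by Lemma \ref{lem:phase_gap}. The representation \eqref{eq:signed_CGO_section4} gives
\[
e^{-\sigma\varphi_1/h}u_{1,\sigma,h}=\bigl(e^{\mathsf i\sigma\psi_1/h}a_\sigma-e^{-\mathsf i\sigma\psi_1/h}\overline{a_\sigma}\bigr)+\bigl(e^{\mathsf i\sigma\psi_1/h}r_{\sigma,h}-e^{-\mathsf i\sigma\psi_1/h}\overline{r_{\sigma,h}}\bigr)+\widetilde r_{\sigma,h}.
\]
The first bracket has $L^2(B)$ norm at least $c_B$ for $h<h_B$, by Lemma \ref{lem:oscillatory_factor_L2}. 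This applies because $a_\sigma$ is holomorphic near $\overline B$, is not identically zero on $B$, and $d\psi_1\ne0$ on $\overline B$. The remaining terms have $L^2$ norm $\mathcal O(h)+\mathcal O(h^{3/2}|\log h|)$, by Lemma \ref{lem:CGO_with_boundary_correction}. For small $h$ the triangle inequality therefore gives $\|e^{-\sigma\varphi_1/h}u_{1,\sigma,h}\|_{L^2(B)}\ge c_B/2$.

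Since $e^{\theta/h}\ge e^{2c/h}$ pointwise on $B$, we obtain
\[
\|e^{-\sigma\varphi_2/h}u_2\|_{L^2(B)}\ge e^{2c/h}\,\frac{c_B}{2}.
\]
Together with the upper bound, this yields $e^{2c/h}c_B/2\le Ch^{-6}$, which fails for all sufficiently small $h$. Take $h_*$ to be the minimum of the thresholds from Lemmas \ref{lem:polynomial_upper_bound}, \ref{lem:oscillatory_factor_L2}, the remainder absorption, and this final comparison. For every $0<h<h_*$ the assumed equality is then impossible.

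The main point to check is uniformity. The constants in the upper bound and in the remainder estimates must be independent of $h$, and this holds because $\sigma$, $B$, $a_\sigma$ and the extensions are fixed beforehand. The upper bound must also be applied only under the contradiction hypothesis, which is exactly how Lemma \ref{lem:polynomial_upper_bound} is stated. Finally, the trace $f_h$ is nonzero for these $h$. If $f_h=0$, then $\Lambda_{D_1,q_1}f_h=0=\Lambda_{D_2,q_2}f_h$, and the contradiction argument above applies, so $f_h\ne0$.
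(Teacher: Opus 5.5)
Your proposal is correct and follows the paper's proof essentially step by step: the glued solution, the polynomial bound from Lemma \ref{lem:polynomial_upper_bound}, the factorization on $B$ into $e^{\sigma(\varphi_1-\varphi_2)/h}$ times the oscillatory term plus an $o(1)$ remainder, Lemma \ref{lem:oscillatory_factor_L2}, and the incompatible comparison $e^{2c/h}c_B/2\le Ch^{-6}$. Your closing remark that $f_h\ne0$ is the same observation the paper makes in its proof of Theorem \ref{thm:obstacle_uniqueness}.
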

	
	\begin{proof}
		Take $h>0$ small enough for the preceding constructions and estimates, and suppose that $\Lambda_{D_1,q_1}f_h=\Lambda_{D_2,q_2}f_h$. Let $u_2$ be the solution extended by Lemma \ref{lem:collar_gluing}. Lemma \ref{lem:polynomial_upper_bound} gives
		\begin{equation}\label{eq:upper_poly_section4}
			\|e^{-\sigma\varphi_2/h}u_2\|_{L^2(\widetilde\Omega_2)}\le Ch^{-6},
		\end{equation}
		where $C$ is independent of this choice of $h$. In $B\Subset\omega$, the glued solution equals $u_{1,\sigma,h}$. By \eqref{eq:signed_CGO_section4},
		\begin{equation*}
			e^{-\sigma\varphi_2/h}u_2=e^{\sigma(\varphi_1-\varphi_2)/h}(b_h+R_h),\quad b_h:=e^{\mathsf i\sigma\psi_1/h}a_\sigma-e^{-\mathsf i\sigma\psi_1/h}\overline{a_\sigma},
		\end{equation*}
		where $R_h:=e^{\mathsf i\sigma\psi_1/h}r_{\sigma,h}-e^{-\mathsf i\sigma\psi_1/h}\overline{r_{\sigma,h}}+\widetilde r_{\sigma,h}$. The remainder bounds give $\|R_h\|_{L^2(B)}\le2\|r_{\sigma,h}\|_{L^2(\widehat\Omega_1)}+\|\widetilde r_{\sigma,h}\|_{L^2(\widehat\Omega_1)}=o(1)$.
		
		Lemma \ref{lem:oscillatory_factor_L2} gives $\|b_h\|_{L^2(B)}\ge c_B$ for every sufficiently small $h$. Decrease the upper bound on $h$ so that also $\|R_h\|_{L^2(B)}\le c_B/2$. The triangle inequality gives $\|b_h+R_h\|_{L^2(B)}\ge c_B/2$. Since $\sigma(\varphi_1-\varphi_2)\ge2c$ throughout $B$, multiplication by this real exponential gives
		\begin{equation}\label{eq:lower_exp_section4}
			\|e^{-\sigma\varphi_2/h}u_2\|_{L^2(B)}\ge e^{2c/h}\|b_h+R_h\|_{L^2(B)}\ge\frac{c_B}{2}e^{2c/h}.
		\end{equation}
		The inclusion $B\subset\widetilde\Omega_2$ and \eqref{eq:upper_poly_section4} imply $(c_B/2)e^{2c/h}\le Ch^{-6}$. Since $h^6e^{2c/h}\to\infty$, this is impossible for all sufficiently small $h$. Choose $h_*>0$ below this threshold and those of the preceding constructions. Equality of the Neumann data is excluded for every $0<h<h_*$.
	\end{proof}

	\begin{proof}[Proof of Theorem \ref{thm:obstacle_uniqueness}]
		Assume $D_1\ne D_2$. Choose $\sigma$, $B$, and $c$ by Lemma \ref{lem:phase_gap}, and form the CGO family above with $f_h:=u_{1,\sigma,h}|_{\partial\Omega}$. Lemma \ref{lem:phase_gap_exclusion} gives $\Lambda_{D_1,q_1}f_h\ne\Lambda_{D_2,q_2}f_h$, for every sufficiently small $h$. Both DN maps send zero to zero, so every such $f_h$ is nonzero. This proves the assertion.
	\end{proof}
	
	\begin{proof}[Proof of Theorem \ref{thm:Schiffer}]
		If $D_1=D_2$, the assertion holds for any nonzero boundary value. If $D_1\ne D_2$, Theorem \ref{thm:obstacle_uniqueness} supplies a nonzero CGO trace $f$ for which $\Lambda_{D_1,q_1}f\ne\Lambda_{D_2,q_2}f$. For this choice of $f$, equality of the Neumann data can therefore hold only when $D_1=D_2$.
	\end{proof}

	\section{Unique determination of the potential}\label{sec:potential}

	\begin{proof}[Proof of Corollary \ref{cor:simultaneous_recovery}]
		By Theorem \ref{thm:Schiffer}, \eqref{eq:same_DN} implies $D_1=D_2=:D$. Set $\mathcal O:=\Omega\setminus\overline D$. Equality of the outer DN maps gives equality of the partial Cauchy data on $\mathcal O$, with measurements on $\partial\Omega$ and zero Dirichlet values on $\partial D$. Since $q_1,q_2\in C^{1,\alpha}(\overline{\mathcal O})$ are real-valued, \cite[Theorem 1.1]{guillarmou2011calderon} gives $q_1=q_2$ in $\mathcal O$.
	\end{proof}

	\section*{Statements and Declarations}
	
	\para{Data availability statement}
	No datasets were generated or analyzed during the current study.
	
	\para{Conflict of interest}
	The authors declare that they have no conflict of interest.

	\para{Acknowledgments} Y.-H. L. is partially supported by the National Science and Technology Council (NSTC) of Taiwan, under the project 113-2628-M-A49-003. Y.-H. L. acknowledges financial support from the Alexander von Humboldt Foundation through the Henriette Herz Scouting Programme and is hosted by Universität Duisburg-Essen. T.L. is partly supported by the Research Council of Finland (Centre of Excellence in Inverse Modelling and Imaging and FAME Flagship, grants 353091 and 359208).

	\bibliography{ref} 
	
	\bibliographystyle{alpha}
	
\end{document}